\documentclass[11pt, reqno]{amsart}

\usepackage{geometry}
\usepackage{hyperref}
\usepackage{mathtools}
\usepackage{tikz}
\usetikzlibrary{knots,calc}
\usepackage{mathrsfs}

\usepackage{cancel}
\usepackage{graphicx,verbatim,enumitem} 

\usepackage{ulem}
\usepackage{tikz}
\usetikzlibrary{arrows.meta,decorations.pathmorphing,calc}

\definecolor{linkpurple}{RGB}{120,40,140}
\definecolor{linkred}{RGB}{200,30,30}
\definecolor{linkgreen}{RGB}{0,120,80}
\definecolor{linkblue}{RGB}{40,100,220}

\usepackage{tikz-cd}

\usepackage[all,cmtip]{xy}

\let\olduplus\uplus
\renewcommand{\olduplus}{\pitchfork}

\usepackage{mathabx}
\usepackage{xcolor}
\usepackage{amssymb}

\usepackage{graphicx}
\usepackage[capitalise,noabbrev,nosort]{cleveref}

\DeclareFontFamily{OT1}{pzc}{}
\DeclareFontShape{OT1}{pzc}{m}{it}{<-> s * [1.10] pzcmi7t}{}
\DeclareMathAlphabet{\mathpzc}{OT1}{pzc}{m}{it}

\usepackage{amsthm}
\theoremstyle{plain}
\newtheorem{theorem}{Theorem}[section]
\newtheorem{lemma}[theorem]{Lemma}
\newtheorem*{lemma*}{Lemma}
\newtheorem{proposition}[theorem]{Proposition}
\newtheorem{corollary}[theorem]{Corollary}

\newtheorem{conjecture}[theorem]{Conjecture}

\newtheorem{terminology}[theorem]{Terminology}

\newcommand{\bd}{\partial}

\newcommand{\Z}{\mathbb{Z}}

\newcommand{\N}{\mathbb{N}}
\newcommand{\R}{\mathbb{R}}

\newcommand{\BBar}{\mathrm{Bar}}

\newcommand{\hConcord}{\mathsf{hConcord}}
\newcommand{\barDelta}{\widebar{\Delta}}
\newcommand{\into}{\hookrightarrow}
\newcommand{\hb}{H^0_{\BBar}}
\newcommand{\green}[1]{\textcolor{green!60!black}{#1}}

\newcommand{\purple}[1]{\textcolor{purple}{#1}}
\newcommand{\blue}[1]{\textcolor{blue}{#1}}
\newcommand{\red}[1]{\textcolor{red}{#1}}
\newcommand{\greg}[1]{\textcolor{red}{[GBF: #1]}}
\newcommand{\nir}[1]{\textcolor{blue}{[NG: #1]}}

\newcommand{\dev}[1]{\textcolor{brown}{[DPS: #1]}}

\newcommand{\Primary}{\mathbb M}

\theoremstyle{definition}
\newtheorem{definition}[theorem]{Definition}

\newtheorem{to-do}[theorem]{To-Do}
\newtheorem{example}[theorem]{Example}

\newtheorem{remark}[theorem]{Remark}

\theoremstyle{remark}

\newtheoremstyle{Note}
  {\topsep}
  {}
  {\color{blue}}
  {0pt}
  {\bfseries\color{blue}}
  {:}
  { }
  {\thmname{#1}\thmnumber{ #2}\textnormal{\thmnote{\textbf{ (#3)}}}}

\theoremstyle{Note}

\newcommand{\xr}{\xrightarrow}
\newcommand{\id}{\textup{id}}
\newcommand{\Hom}{\textup{Hom}}

\title{
Bar cohomology of links: beyond Milnor invariants}

\author[G. Friedman]{Greg Friedman}
\address{Department of Mathematics, Texas Christian University, USA}
\email{\href{mailto:g.friedman@tcu.edu}{g.friedman@tcu.edu}}

\author[N. Gadish]{Nir Gadish}
\address{Department of Mathematics, University of Pennsylvania, USA}
\email{\href{mailto:ngadish@math.upenn.edu}{ngadish@math.upenn.edu}}

\author[R. Koytcheff]{Robin Koytcheff}
\address{Department of Mathematics, University of Louisiana at Lafayette, USA}
\email{\href{mailto:koytcheff@louisiana.edu}{koytcheff@louisiana.edu}}

\author[D. Sinha]{Dev Sinha}
\address{Department of Mathematics, University of Oregon, USA}
\email{\href{mailto:dps@uoregon.edu}{dps@uoregon.edu}}

\author[B. Walter]{Ben Walter}
\address{Department of Mathematical Sciences, University of the Virgin Islands, USA}
\email{\href{mailto:benjamin.walter@uvi.edu}{benjamin.walter@uvi.edu}}

\begin{document}

\begin{abstract} 
    We develop bar cohomology of link complements as an invariant of links in homology spheres. In this setting, bar cohomology is a Hopf algebra which is calculable using surfaces and their intersection curves in a link complement. In this first in a sequence of works, we introduce the invariant and show that it defines a canonical subspace of the tensor Hopf algebra, which already encodes information about Milnor's link invariants and provides geometrically significant information beyond them. 

\end{abstract}

\maketitle

\vspace{-3ex}
\tableofcontents

\subjclass{\textbf{MSC2020:} {\bf Primary}: 
57K10, 
55Q25, 
57T30, 
16T05. 
{\bf Secondary}: 
57T35, 
57K16. 

}

\keywords{\textbf{Keywords:} link, link group, Milnor invariants, bar construction, Hopf algebra, geometric cohomology

\section{Introduction}



It has been understood for some time that the higher linking numbers collectively known as Milnor's $\mu$-invariants can be interpreted through cochains \cite{Po80,Tu76} and, more conceptually, in terms of intersections between curves and surfaces in a link's exterior \cite{Ha85, Ha85e,Co90,  penna_higher_2002, Mellor-Melvin, hebda_approach_2012, Hsieh-Kauffman-Tsau, DNOP20}. 
Hain introduced the use of Chen integrals, based on the bar construction (recalled in \cref{S: bar cohomology}) to equate these linking numbers with counts of ordered sequences of intersections along curves. 

We systematize this relationship by treating the entirety of the bar construction, along with its natural algebraic structures, as a link invariant.  In a few senses, the bar construction thus gives a categorification of Milnor invariants.  We use this framework to find invariants beyond Milnor's invariants, which moreover are geometrically computable -- see \cref{ex:3-chain-with-4th}.

This paper is the first in a planned collection of papers developing the bar construction on  cochains of
a link complement as a link invariant. 
Here we introduce the characters, extract a primary invariant, and relate this primary invariant with Milnor's classical higher linking numbers.  We also show that ours is a proper generalization, distinguishing links for which higher linking numbers are not defined. 
In one sequel, almost complete, we use deformation theory of bialgebras to extract finer invariants. One setting where our theory applies coincides with that for the total triple linking invariant of Davis--Nagel--Orson--Powell \cite{DNOP20}, but our theory immediately applies more broadly.
In another forthcoming sequel, we  prove that the bar construction is in fact a concordance invariant, which lifts to a nontrivial map from the space of links to a classifying space of automorphisms of a class of Hopf algebras, factoring through the block embedding space of link concordances. 
 We give an overview of future planned work in Section \ref{subsequent work} below, the common thread being
 the development of explicit geometry of invariants 
 governed by the bar construction, as realized through intersections of surfaces and cobounding curves in the link's complement.

\subsection{Link isomorphism invariance of the bar cohomology}

\begin{definition}
Let
$nS^1 = \{1,\dots,n\}\times S^1 = \coprod_{k=1}^n S^1$. A {\bf link} in a 3-manifold $M$ is a smooth\footnote{More generally, continuous and locally-flat embeddings.} embedding of circles $L:n S^1 \into M$.  We fix an orientation on $S^1$, thus orienting each component of $L$.
A {\bf pointed link} is  an embedding of  $(n S^1)_+$ obtained from the circles by adjoining a disjoint basepoint.  
\end{definition} 

The constructions we discuss below are invariant under the following equivalence relation. In later work, we will prove that they are concordance invariants as well.
\begin{definition}
    A {\bf link isomorphism} between two (pointed or unpointed) links $L,L'$ in $M$ is a an orientation-preserving diffeomorphism $F:M\to M$ such that $F\circ L = L'$. Two links are said to be {\bf link isomorphic} if there exists a link isomorphism between them.
\end{definition} 
By the isotopy extension theorem, links that are smoothly isotopic are link isomorphic in this sense, so this equivalence relation is at least as coarse as smooth isotopy.  The two relations agree for $M=S^3$. Isomorphism classes of pointed links are independent of the choices of basepoints and the parameterization of the links. They do, however, depend on the ordering and orientation of the components. 

Our main link invariant is the degree-zero bar cohomology of the link complement, recalled in \cref{S: bar cohomology}, which is a Hopf algebra whose elements are represented by tensor products of cochains. In our setting,  cochains can be given by Seifert surfaces and other surfaces obtained through iterated processes of intersection and cobounding. We also mark our Hopf algebra by the canonical (co)homology classes arising from Alexander duality. Isomorphic links will be shown to determine isomorphic marked algebras, and so we think of these isomorphism types as invariants. Moreover, these invariants can be computed using one's preferred cochain model, e.g., through inductively intersecting surfaces and cobounding their intersection curves, starting with Seifert surfaces as discussed in \cref{S:examples}. 
However, it is not always clear when two such Hopf algebras are isomorphic. We address this challenging flexibility by extracting rigid invariants -- objects which can be compared directly with no automorphisms to account for. 
Our development of these invariants builds upon work on fundamental groups by the second author \cite{Gad23} and by three of the authors with Ozbek \cite{gadish_infinitesimal_2024}, specializing to the setting of a link complement.

Let us describe the markings our Hopf-algebraic invariants possess. All rings in this work are assumed to have a unit.
\begin{definition}
Recall that the set of primitive elements of a Hopf algebra $H$ is $\mathrm{Prim}(H) := \{ h\in H \mid \Delta(h) = h\otimes 1 + 1\otimes h \}$.  
When $H$ is a Hopf algebra over some ring $R$, a {\bf marking} of $H$ is a choice of basis of primitive elements for $H$ as a free $R$-module -- that is, a fixed isomorphism $\mathrm{Prim}(H) \cong R^n$, in which case we say that it is of {\bf rank $n$}. Let  $\mathsf{Hopf}_n(R)$ denote the category of connected, commutative, and conilpotent Hopf algebras $H$ together with a marking of rank $n$, with homomorphisms that commute with the given markings.
\end{definition}

For a pointed topological space $X$, let $C^*(X;R)$ denote the functor of singular cochains with coefficients in a commutative ring $R$, augmented by the restriction to the basepoint. For an augmented differential graded algebra $C^*$, let $H^*_{Bar}(C^*)$ denote the cohomology of its bar construction, recalled in \cref{def:bar complex}.
\begin{theorem}[Bar invariant with PID coefficients]\label{T: concordance invariants}
    Let $R$ be a principal ideal domain and $M$ an $R$-homology 3-sphere. Then for every link $L:nS^1\into M$ and for any choice of exterior basepoint, $H^0_{\BBar}(C^*(M\setminus L;R))$ is a marked Hopf algebra such that if $L'$ is a second link isomorphic to $L$ then
    $$H^0_{\BBar}(C^*(M\setminus L); R) \cong H^0_{\BBar}(C^*(M\setminus L'; R)),$$ as marked
     Hopf algebras in $\mathsf{Hopf}_n(R)$.
\end{theorem}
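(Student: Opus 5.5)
The plan has three parts: show that $H^0_{\BBar}(C^*(X;R))$ is a connected, commutative, conilpotent Hopf algebra for $X=M\setminus L$; produce a canonical marking from Alexander duality; and check that a link isomorphism induces an isomorphism preserving both.

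\emph{Hopf structure.} The bar construction $\BBar(C^*)$ of an augmented dga is a dg coalgebra under deconcatenation. Since the cup product is commutative only up to homotopy, I will not use the shuffle product on singular cochains directly. Instead I would pass to a quasi-isomorphic commutative model, or use an $E_\infty$/Steenrod cup-$1$ structure, to get a product on $\BBar$ that is well defined up to homotopy. On cohomology this gives a commutative Hopf algebra, functorial in pointed spaces.

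In degree $0$ only the terms $[a_1|\cdots|a_k]$ with $|a_i|=1$ and their coboundary corrections contribute, and each element lies in a finite bar-length filtration stage. Connectedness and conilpotence therefore come from the bar-length filtration.

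\emph{Marking.} I would identify the primitives of $H^0_{\BBar}$. The first stage of the bar filtration gives $H^1(X;R)\hookrightarrow \hb$. Its image is primitive, because the reduced coproduct of $[a]$ vanishes. Conversely, I would show that every primitive has bar length $1$, using the associated graded of the filtration. This identification is the main obstacle: over a PID rather than a field, torsion and Ext terms could intervene. I expect them to disappear here because $H^*(X;R)$ is free and concentrated in degrees $\le 2$.

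Alexander duality in the $R$-homology sphere $M$ gives $H_1(X;R)\cong R^n$, generated by the meridians $m_1,\dots,m_n$. Since $H_0$ is free, universal coefficients give $H^1(X;R)=\mathrm{Hom}(H_1(X),R)\cong R^n$. Explicitly, the dual basis is given by linking number with each component, represented geometrically by the Seifert surface cochains. These classes form the marking $\mathrm{Prim}(\hb)\cong R^n$. Independence of the exterior basepoint follows because $X$ is path-connected: a path between basepoints gives a homotopy equivalence of pointed spaces up to free homotopy, and $H^1$ classes are unaffected.

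\emph{Invariance.} Let $F:M\to M$ be an orientation-preserving diffeomorphism with $F\circ L=L'$. It restricts to a homeomorphism $M\setminus L\to M\setminus L'$, which I make pointed by composing with a path between the image basepoint and the chosen one. Functoriality of $\hb\circ C^*$ then yields a Hopf algebra isomorphism.

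It remains to check that this isomorphism respects the marking, meaning $F^*$ sends the class dual to the $i$th meridian of $L'$ to the one for $L$. Because $F\circ L=L'$ preserves the ordering and orientation of the components, $F$ carries a small meridian disk of component $i$ of $L$ to one of component $i$ of $L'$. Because $F$ also preserves the orientation of $M$, these meridians match with the correct sign. So $F_*m_i=m_i'$, and dually $F^*$ respects the marking.
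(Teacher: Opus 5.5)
Your overall architecture matches the paper's: an $E_\infty$-Hopf product, primitives identified with weight-one classes and hence with $H^1$, an Alexander-duality marking, and functoriality under the complement homeomorphism. However, the step ``on cohomology this gives a commutative Hopf algebra'' is a genuine gap over a PID. The deconcatenation coproduct on $\BBar(A)$, with $A=C^*(X;R)$, only yields a map $H^0(\BBar A)\to H^0(\BBar A\otimes\BBar A)$. To get a coproduct on $\hb(X;R)$ you must invert the K\"unneth map $H^0(\BBar A)\otimes H^0(\BBar A)\to H^0(\BBar A\otimes\BBar A)$. By the PID K\"unneth sequence, the target is an extension of $\bigoplus_{i+j=1}\operatorname{Tor}_1^R(H^i_{\BBar},H^j_{\BBar})$ by $\bigoplus_{i+j=0}H^i_{\BBar}\otimes H^j_{\BBar}$. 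So you need both $H^{<0}_{\BBar}(X;R)=0$ and $\hb(X;R)$ torsion-free, and your sketch establishes neither. Your assertion that the class of $[a]$ is primitive, and hence the whole marking, presupposes this coproduct; compare \cref{ex:non-coalgebra}, where a coproduct fails to descend to cohomology. The torsion issue you postpone to the primitives step (``I expect them to disappear'') is therefore needed first, and it is the technical heart of the proof.

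The missing input is the weight spectral sequence computation of \cref{lem:spectral sequence}, as used in the proof of \cref{prop:H0Bar is Hopf}. Here $E_1^{-p,q}=H^q(\bar A^{\otimes p})$ with $\bar A$ flat. Because $H^1(X;R)\cong\Hom(H_1(X),R)$ is torsion-free, induction with the PID K\"unneth sequence gives $H^q(\bar A^{\otimes p})=0$ for $q<p$ and $H^p(\bar A^{\otimes p})\cong H^1(X;R)^{\otimes p}$. Hence $E_1$ vanishes in negative total degree, so $H^{<0}_{\BBar}=0$. Nothing can hit total degree $0$, so $E_\infty^{-p,p}\hookrightarrow H^1(X;R)^{\otimes p}$. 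Thus the associated graded, and so $\hb(X;R)$, is torsion-free, and the K\"unneth map is an isomorphism in degree $0$.

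The same computation resolves what you called the main obstacle. The coalgebra map $\hb(X;R)\to T^c(H^1(X;R))$ extending the weight-one projection is injective, since on associated graded it is the inclusion $E_\infty\hookrightarrow E_1$. The primitives of $T^c(V)$ are exactly $V$, so $\mathrm{Prim}(\hb(X;R))\cong H^1(X;R)\cong R^n$.

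Two smaller corrections. First, over $\Z$ or $\mathbb{F}_p$ singular cochains generally admit no strictly commutative model, and a cup-$1$ product alone does not multiply the bar construction. You genuinely need an $E_\infty$-Hopf structure such as Fresse's on a cofibrant replacement (\cref{lem:product on bar}). Second, instead of ``composing with a path,'' compose $F$ with a point-pushing diffeomorphism isotopic to the identity in $M\setminus L'$. This map acts trivially on $H^1$ and hence preserves the marking. Your Alexander-duality and meridian-matching argument for invariance of the marking is otherwise fine.
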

This is proved in \cref{S: bar cohomology}. We can then employ any of the numerous rich invariants of Hopf algebras, each of which gives new link invariants.  Moreover,
our preferred approach to calculations has an intersection-theoretic flavor. Contrast this with Milnor's classical approach to higher linking, using words in nilpotent quotients of the fundamental group, 
which are notoriously difficult to compute and for which geometric interpretation requires substantial labor even in a first case after linking numbers \cite{Mellor-Melvin}.

\subsection{Leading-term invariants}
The first link invariant we extract from the isomorphism type of $H^0_{\BBar}(-;R)$ is its associated graded with 
respect to the natural weight filtration (see \cref{def:weight filtration}). We show that for link complements this associated graded admits a natural embedding into a cofree graded Hopf algebra, yielding a rigid invariant.

\begin{definition}
    Let $R$ be a commutative ring.  Given a generating set $S_1, \cdots S_n$, 
    define the cofree graded Hopf algebra $R\langle S_1,\ldots,S_n\rangle$ as follows.  

Let $V\cong R^n$ be the free $R$-module generated by the set of formal symbols $\{S_1,\ldots ,S_n\}$; then additively  $R\langle S_1,\ldots,S_n\rangle \cong \bigoplus_{p\geq 0}V^{\otimes p}$.
We use bars to denote tensors, so $[S_{i_1} | \cdots | S_{i_p} ] \in V^{\otimes p}.$ 

Its coproduct is defined  by deconcatenation: 
 \[
    \Delta([S_{i_1}|S_{i_2}|\cdots| S_{i_p}]) = \sum_{k=0}^p [S_{i_1}| 
\cdots |S_{i_k}] \otimes  [S_{i_{k+1}}| \cdots| S_{i_p}].
\]

Its (commutative) product is the unsigned shuffle product
    \[
    [S_{i_1}|S_{i_2}|\cdots |S_{i_p}]*[S_{i_{p+1}}|\cdots |S_{i_{p+q}}] = \sum_{\sigma\in \mathrm{Shuffles}(p,q)}[S_{i_{\sigma(1)}}|S_{i_{\sigma(2)}}|\cdots |S_{i_{\sigma(p+q)}}].
    \]

\end{definition}

In fact, $R\langle S_1,\ldots,S_n\rangle$ is cofree as a coalgebra, cogenerated by the projection onto its summand $V$.

\begin{theorem}[Leading-term bar invariant] \label{intro-thm:primary invariant}
    Let $R$ be a commutative Noetherian ring and $M$ an $R$-homology 3-sphere.  For a pointed link, the associated graded of $H^0_{\BBar}(M \setminus L)$ with respect to tensor weight, denoted $\Primary(L)$, is equipped with a natural embedding of graded algebras $i_L:\Primary(L)\into R\langle S_1,\ldots,S_n\rangle$, 

    The image $i_L(\Primary(L))\subseteq R\langle S_1,\ldots,S_n\rangle$ is a link isomorphism invariant, meaning that if there exists a link isomorphism $L\sim L'$, then $i_{L}(\Primary(L))=i_{L'}(\Primary(L'))$ as submodules of $R\langle S_1,\ldots,S_n\rangle$. In particular, the invariant is independent of basepoints.
    
    When $R$ is a principal ideal domain, $\Primary(L)$ is a Hopf algebra, and the embedding is a map of Hopf algebras.
\end{theorem}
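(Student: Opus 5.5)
The plan is to reduce the associated graded to a statement about the cochain algebra truncated to low degrees. Let $X = M\setminus L$, and filter the bar complex $\BBar(C^*(X;R))$ by tensor length. The associated graded of this filtration is controlled by the $E_1$ page of the resulting spectral sequence (the Eilenberg--Moore type spectral sequence). Its columns are tensor powers of $\widetilde C^*(X)$, and its $E_1$ term in total degree zero is built from $\widetilde H^*(X;R)^{\otimes p}$ in the bidegrees of total degree zero.

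By Alexander duality in the $R$-homology sphere $M$:
\begin{itemize}
\item $\widetilde H^1(X;R)\cong R^n$, freely generated by the classes dual to the meridians. These are the classes $S_1,\dots,S_n$, represented by Seifert surfaces.
\item $H^2(X;R)\cong R^{n-1}$.
\item $H^{\geq 3}=0$.
\end{itemize}
In the bar grading a class of degree $k$ sits in internal degree $k-1$. So the only contributions to total degree zero come from tensors of degree-one classes, i.e. from $V^{\otimes p}$ with $V=H^1(X;R)\cong R^n$, and nothing from $E_1$ lies in negative total degree. Thus degree-zero classes can only be killed by nothing and can only support differentials into total degree one. Hence the weight-$p$ piece of $\Primary(L)$ is the intersection of the kernels of all differentials $d_r$ leaving $E_r^{-p,p}$, which is a submodule of $V^{\otimes p}$. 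The $d_1$ differential is zero on cohomology. The $d_2$ differential is induced by the cup product $V\otimes V\to H^2$, and the higher $d_r$ are Massey-product type operations.

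This yields an injection $i_L:\Primary(L)=E_\infty^{0}\hookrightarrow E_1^{0}=R\langle S_1,\ldots,S_n\rangle$. Concretely, the map sends the leading tensor-length term of a bar cocycle $\sum[a_1|\cdots|a_p]+(\text{lower})$ to $[\bar a_1|\cdots|\bar a_p]$, using that the $a_i$ of the top weight must be cocycles of degree one. Some care is needed with convergence and Noetherianity: since $R$ is Noetherian, submodules of the finitely generated $V^{\otimes p}$ are finitely generated, and the filtration is exhaustive and bounded below in each weight, so $E_\infty$ is computed as a decreasing chain of submodules that stabilizes. The shuffle product on the bar construction is compatible with the tensor-length filtration and induces the shuffle product on $E_1$, so $i_L$ is multiplicative. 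When $R$ is a PID, the submodules of free modules are free and Künneth holds for the tensor products, so the coproduct (deconcatenation) descends to $\Primary(L)$ and $i_L$ is a coalgebra map. The main technical point here is flatness, needed so that $\mathrm{gr}(H\otimes H)\cong \mathrm{gr}H\otimes\mathrm{gr}H$ maps injectively.

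Invariance proceeds as follows. A link isomorphism $F$ induces a homotopy equivalence $M\setminus L\to M\setminus L'$, and a change of basepoint within the connected exterior does likewise up to homotopy. Either gives a quasi-isomorphism of augmented cochain algebras and hence an isomorphism of filtered bar cohomologies, and therefore of associated gradeds. The crux is that the induced map on $E_1 = R\langle S_1,\ldots,S_n\rangle$ is the identity. Because $F$ preserves orientations of $M$ and of each component, it carries oriented meridians of $L$ to oriented meridians of $L'$, so $F^*$ sends $S_i$ to $S_i$ on $H^1$. By functoriality of the spectral sequence the induced map on $E_1$ is therefore $\mathrm{id}$ on each $V^{\otimes p}$. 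The commuting square then forces $i_L(\Primary(L))=i_{L'}(\Primary(L'))$ as submodules. For basepoint independence, the map induced on $H^1$ by changing the basepoint is likewise the identity.

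The main obstacle I expect is carefully justifying that the leading-term map is well defined and injective on the associated graded, that is, identifying $\mathrm{gr}_p H^0_{\BBar}$ with $E_\infty^{-p,p}$. This requires convergence of the spectral sequence for the non-simply-connected space $X$. I would handle it directly rather than through a general convergence theorem. Since total degree zero has nothing below it, $H^0_{\BBar}$ is the kernel of the bar differential on $\BBar^0$, so its filtration quotients embed into $\widetilde C^1$ tensor cocycle classes. This bypasses convergence issues entirely, and only requires that weight-$p$ leading terms of cocycles are tensors of cocycles, which we verify modulo exact terms that are absorbed into lower filtration.
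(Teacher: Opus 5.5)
The genuine gap is the product. You say that ``the shuffle product on the bar construction'' respects the weight filtration and induces the shuffle product on $E_1$. But $C^*(X;R)$ is not commutative, and for a noncommutative augmented dg algebra the shuffle product is not a chain map on $\BBar(A)$. For $1$-cocycles $a,b$ one gets $d_{\BBar}([a|b]+[b|a])=[ab+ba]$, which equals $\pm[d(a\cup_1 b)]$ and is nonzero in general. So the naive shuffle of the bar cocycles $[a]$ and $[b]$ is not even a cocycle. Over $\Z$ or $\Z_m$ there is also no natural strictly commutative cochain model to substitute. As written, you therefore have no product on $H^0_{\BBar}(M\setminus L;R)$ at all. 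Consequently neither ``embedding of graded algebras'' nor, for PID coefficients, the product--coproduct compatibility needed for ``Hopf algebra'' is proved.

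The paper fills this in two steps. First, it takes a cofibrant $E_\infty$-replacement $P\xrightarrow{\sim}C^*(X;R)$ and uses Fresse's theorem that $\BBar(P)$ is an $E_\infty$-Hopf algebra. It transports this structure along the filtered quasi-isomorphism $\BBar(P)\to\BBar(C^*(X;R))$ (\cref{lem:quasi-iso invariance}, \cref{lem:product on bar}). This gives a filtered commutative product on $H^0_{\BBar}$ and compatible products on every page of the weight spectral sequence. Second, it identifies the product on $E_1^0\cong T^c(H^1)$ without computing it, via \cref{lem:shuffle is unique}. A graded product making the cofree coalgebra $T^c(M)$ a bialgebra is determined by its projection to $M$, which the unit axiom forces, so it must be the shuffle product.

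There are also secondary issues. Your proposed ``direct'' bypass rests on a false chain-level claim. With singular cochains augmented at the basepoint, $\bar C^0(X;R)$ (functions vanishing at $*$) is nonzero. So $\BBar$ is nonzero in negative total degrees, and $\BBar^0$ contains terms like $[f|\beta]$ with $f\in\bar C^0$ and $\beta\in C^2$. Hence $H^0_{\BBar}$ is not simply a kernel on $\bigoplus_p(\bar C^1)^{\otimes p}$.

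Vanishing in negative total degree holds only on $E_1$. Even there it needs the K\"unneth identification $H^*(\bar A^{\otimes p})\cong H^*(\bar A)^{\otimes p}$ in the relevant range, and hence flatness of the cochains themselves. That is where the paper uses that $R$ is Noetherian (products $\prod R$ are flat, by Chase). Finite generation of submodules of $V^{\otimes p}$ plays no role, since only finitely many differentials leave $E^{-p,p}$.

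Convergence, on the other hand, is not an obstacle. The weight filtration is exhaustive with $F_{-1}=0$, so the spectral sequence converges to $H^*(\BBar(A))$ whatever $\pi_1$ is; simple connectivity only matters for comparison with $H^*(\Omega X)$. Since $E_1$ vanishes in negative total degree, nothing hits total degree $0$, and one gets $E_\infty^{-p,p}\subseteq E_1^{-p,p}$ directly, which is the paper's route. Also, $d_1$ is the cup product, not zero, so your differentials are shifted by one.

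Your invariance argument, via $F^*S_i=S_i$ and naturality of $E_1^0\cong T^c(H^1)$, agrees with the paper. So does your K\"unneth/torsion-freeness sketch for the coproduct over a PID.
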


We will show in Section \ref{S:examples} that this theorem is geometric in the sense that $\Primary(L)$ is computable directly through intersecting and cobounding  proper surfaces in $M\setminus L$. The remaining statements are proved in  \cref{prop:H0Bar is Hopf} in \cref{S:primary}.}

    One can query the subspace $i_L$ 
    to define concrete invariants.  For example, for any given element $P\in R\langle S_1, \cdots S_n\rangle$ the question of whether this element lies in the image of $i_L$ constitutes a binary link isomorphism invariant, which we compare with Milnor's $\bar\mu$-invariants below.  

\begin{terminology}
    We call the embedding $i_L:\Primary(L)\into R\langle S_1,\ldots,S_n\rangle$ the  {\bf  leading-term Milnor--Hopf algebraic invariant of a link} $L$, or simply the {\bf  leading-term bar invariant}. Often, we abuse notation and write $\Primary(L)$ for the embedding or for its image in $R\langle S_1,\ldots,S_n\rangle$.

    The formal symbols $S_1,\ldots,S_n$ are identified with a basis of the link complement's first cohomology,
    with $S_i\in H^1(M\setminus L;R)$  the class that measures linking of $1$-cycles with the $i$-th link component.  In geometric cohomology, this class
    is represented by a Seifert suface for the $i$-th component. 
    
\end{terminology}

We name this invariant after Hopf and Milnor because the Hopf algebra $\Primary(L)$ reflects Milnor's $\bar\mu$-invariants, as discussed next, while the bar construction can be used to define all higher rational Hopf invariants \cite{Sinha-Walter:2013}. 


\subsubsection{Relationship with the fundamental group}\label{relationshippi1}

The leading-term invariant $\Primary(L)$ is related to the fundamental group of the complement of $L$ as follows. For a pointed link $L$ in $M$, let $\pi(L)$ denote the fundamental group $\pi_1(M\setminus L)$ of the link complement $M\setminus L$, based at the chosen basepoint. 
For any ring $R$, the group ring $R[\pi(L)]$ is a Hopf algebra with respect to the usual convolution product and the coproduct determined on elements of the group by $\Delta(g) = g \otimes g$. The augmentation ideal is defined as $I_R(L): = (g-1\mid g\in \pi(L) ) \lhd R[\pi(L)]$.
\begin{proposition}\label{P: associated graded augmentation}
    Let $R$ be a field and let $L_0,L_1$ be two pointed $n$-links in an $R$-homology 3-sphere $M$. Then $\Primary(L_0) = \Primary(L_1)$ if and only if there exists an isomorphism of associated graded Hopf algebras
    \[
    \bigoplus_{d\geq 0} I_R(L_0)^d/ I_R(L_0)^{d+1} \cong \bigoplus_{d\geq 0} I_R(L_1)^d/ I_R(L_1)^{d+1}
    \]
    that furthermore preserves the classes of meridians. The latter condition means that if $\mathpzc{m}_k(L_0)$ and $\mathpzc{m}_k(L_1)$ are any pointed meridians for the $k$-th link component of their respective links, then the isomorphism must send $\mathpzc{m}_k(L_0)$ to $\mathpzc{m}_k(L_1)$ modulo $I_R(L_1)^2$.
\end{proposition}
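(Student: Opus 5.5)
The approach is duality. Over a field $R$, $H^0_{\BBar}(C^*(X;R))$ for $X = M\setminus L$ is naturally identified (Chen, Hain) with the continuous dual of the $I$-adic completion of $R[\pi(L)]$. Equivalently, it is the Hopf algebra of functions on $R[\pi]$ that vanish on some power $I^{d+1}$. I will first establish this pairing concretely. A bar element $[\omega_1|\cdots|\omega_p]$ pairs with a loop $\gamma$ by the iterated-integral, or ordered-intersection, count, and one extends linearly to $R[\pi]$. The weight filtration $F_p$ (tensor length $\le p$) is then exactly the annihilator of $I^{p+1}$. This pairing is a Hopf pairing: shuffle product is dual to the group-like coproduct $g\mapsto g\otimes g$, and deconcatenation is dual to concatenation of loops. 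Since $H^1$ vanishes on $I^2$, degree-one classes kill $I^2$, and the induction uses that $H^2$-type obstructions only cut down the image, never the dual filtration. Passing to associated gradeds gives a nondegenerate Hopf pairing between $\Primary(L)$ and $\mathrm{gr}_I R[\pi(L)] = \bigoplus_d I^d/I^{d+1}$. Each graded piece is finite dimensional because $\pi$ is finitely generated. Hence $\mathrm{gr}_d\Primary(L) \cong (I^d/I^{d+1})^\vee$ as graded Hopf algebras.

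Next I relate the embedding $i_L$ to meridians. Because $M$ is an $R$-homology sphere, $I/I^2 \cong H_1(X;R)\cong R^n$ has basis the meridian classes $\mathpzc m_k$. These are dual to $S_k$ by Alexander duality; this is the marking in \cref{T: concordance invariants}. The graded algebra $\mathrm{gr}_I R[\pi]$ is generated in degree one, so there is a surjection of graded Hopf algebras $T(V^\vee)\to \mathrm{gr}_I R[\pi]$ sending the free generator $x_k$ to $\mathpzc m_k$. Here $T(V^\vee)$ is the free associative algebra with primitive generators. The graded dual of $T(V^\vee)$ is exactly $R\langle S_1,\ldots,S_n\rangle$, with shuffle product and deconcatenation. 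I would then check that $i_L$ is the dual of this surjection; this follows from how $i_L$ is built in \cref{S:primary} through iterated cogeneration by the projection to $V$. Consequently $i_L(\Primary(L)) = \ker(T(V^\vee)\to \mathrm{gr}_I R[\pi(L)])^{\perp}$.

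Both directions now follow. If $\Primary(L_0)=\Primary(L_1)$ as subspaces, then their annihilators $J_0=J_1\subset T(V^\vee)$ coincide. So $T(V^\vee)/J_0 \cong T(V^\vee)/J_1$ is an isomorphism of graded Hopf algebras sending $x_k\mapsto x_k$, hence $\mathpzc m_k(L_0)\mapsto \mathpzc m_k(L_1)$ modulo $I^2$. For the converse, suppose $\phi:\mathrm{gr}\,R[\pi(L_0)]\to \mathrm{gr}\,R[\pi(L_1)]$ is an isomorphism of graded Hopf algebras preserving meridian classes in degree one. Since both algebras are generated in degree one, $\phi$ commutes with the two surjections from $T(V^\vee)$. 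Thus $J_0=J_1$, and dualizing gives equal images. The choice of pointed meridian does not matter, since any two differ by conjugation and conjugates agree modulo $I^2$.

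The main obstacle is the first step: proving rigorously that $F_p H^0_{\BBar}$ is precisely the annihilator of $I^{p+1}$ and that the pairing is nondegenerate on associated gradeds, for singular cochains with field coefficients. To handle this, I would invoke Chen's $\pi_1$ de Rham theorem in its algebraic form. That form says $H^0$ of the bar construction on a connected space is the continuous dual of the completed group ring, and it holds over any field via the Eilenberg–Moore or bar spectral sequence in degree zero. I would then compare filtrations using the $E_1$ page $\bigoplus_p (\bar H^*)^{\otimes p}$ in total degree zero. Finally, I would identify the dual of $\mathrm{gr}_I$ with the $E_\infty$ terms along the diagonal, which is the Sullivan/Quillen identification of $\mathrm{gr}_I$ with the universal enveloping algebra of the Malcev Lie algebra.
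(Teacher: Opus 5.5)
Your proposal is correct and follows essentially the same route as the paper. The key input is the filtered identification $H^0_{\BBar}(M\setminus L;R)\cong\varinjlim_p \Hom(R[\pi(L)]/I^{p+1},R)$, which the paper does not reprove but cites from \cite[Theorem 1.3]{Gad23}, so you can cite it too rather than rederive it; the Malcev/Quillen enveloping-algebra description you mention at the end holds only in characteristic zero and is not needed. After that, both arguments pass to associated gradeds and dualize using finite-dimensionality of $I^d/I^{d+1}$. Your comparison of kernels of $T(V^\vee)\to\bigoplus_d I^d/I^{d+1}$ is the dual form of the paper's appeal to cofreeness of $R\langle S_1,\ldots,S_n\rangle$ together with the duality between meridians and the classes $S_k$. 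It has a minor bonus: the Hopf structure on the quotients comes for free, whereas the paper invokes uniqueness of graded products (\cref{lem:shuffle is unique}).
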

The proof of \cref{P: associated graded augmentation} is at the end of \cref{S:primary}.

\begin{remark}[Low-degree truncations]\label{S:bounded truncations}
The leading-term invariant $\Primary(L)$ breaks up into graded components and can thus be considered as a sequence of invariants of growing complexity: $\Primary(L)_2$ is sensitive to pairwise linking numbers, $\Primary(L)_3$ detects triple linking, and $\Primary(L)_4,\Primary(L)_5,\ldots$ extend this further. Equality of $\Primary(L)_d$ for $d\leq d_0$ translates to an isomorphism of associated graded truncations of group rings $R[\pi(L)]/I^{d_0+1}$: when $R$ is a field,
\begin{equation}\label{eq:bdd truncations}
\Primary(L)_{\leq d_0}=\Primary(L')_{\leq d_0} \iff \bigoplus_{0\leq d\leq d_0} I_R(L)^d/I_R(L)^{d+1} \cong \bigoplus_{0\leq d\leq d_0} I_R(L')^d/I_R(L')^{d+1}
\end{equation}
through an isomorphism of algebras that preserves the respective meridians.
One can calculate these invariants in degree $d\leq d_0$ without having to compute the entire space $\Primary(L)$. Every such bounded calculation involves finitary $R$-linear algebra.
\end{remark}

\subsubsection{Relation to Milnor's invariants}
We also think of $\Primary(L)$ as a refinement of Milnor's $\bar\mu$-invariants of links, as indicated by the following result. Throughout, for ease of notation we write $\Z_\Lambda$ for the ring $\Z/\Lambda \Z$ where $\Lambda\in \Z$, and $\Z_0=\Z$.
\begin{theorem}[Refinement of $\bar\mu$-invariants]
    \label{T:Milnor refinement}
    Fix a coefficient ring $R=\Z_\Lambda$ for some $\Lambda\in \Z$ (possibly zero) and let $L_0$ and $L_1$ be two $n$-component links in $S^3$. If $\Primary(L_0)=\Primary(L_1)$, then for every $p\geq 2$, the $\bar\mu$-invariants $\bar \mu_{i_0\cdots i_r}$ of $L_0$ are well-defined and vanish in $R$ for all multi-indices $(i_0,\ldots,i_r)$ with $1\leq i_0,\ldots i_r\leq n$ and $r< p$ if and only if the same invariants of $L_1$ are well-defined and vanish in $R$. Moreover, if $\Lambda \neq 0$ and these invariants vanish for a given $p$, then for any multi-index $(i_0,\ldots,i_p)$ with $1\leq i_0,\ldots,i_p\leq n$, we have $\bar \mu_{i_0\cdots i_p}(L_0)$ is equal to $\bar\mu_{i_0\cdots i_p}(L_1)$ up to multiplication by a unit in $R^\times$.
\end{theorem}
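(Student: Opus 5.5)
The plan is to express the vanishing of $\bar\mu$-invariants of length $\le p$, and the values of those of length $p+1$, purely in terms of the subspace $\Primary(L)\subseteq R\langle S_1,\ldots,S_n\rangle$. The statement then follows because this subspace is, by hypothesis, the same for $L_0$ and $L_1$.

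The bridge is the Magnus expansion and Chen's theorem, as used by Hain and Orr/Turaev. Let $\pi=\pi(L)$ and fix meridians $\mathpzc m_1,\ldots,\mathpzc m_n$. The Magnus map $\theta:\mathpzc m_i\mapsto 1+X_i$ sends $R[\pi]$ to the completed free associative algebra modulo the image of the longitude relations. Degree-zero bar cohomology is dual to the $I$-adically completed group ring. Through that duality, $\Primary(L)$ in degree $d$ is identified with the annihilator in $(V^{\otimes d})^\vee\cong V^{\otimes d}$ of the degree-$d$ relations in $\mathrm{gr}_I R[\pi]$. Here a word $[S_{i_1}|\cdots|S_{i_d}]$ pairs with the monomial $X_{i_1}\cdots X_{i_d}$, and the identification respects meridians by \cref{P: associated graded augmentation}.

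Next I use Milnor's theorem. The invariants $\bar\mu_{i_0\cdots i_r}$ with $r<p$ are defined and vanish in $R$ exactly when the longitudes lie in the $p$-th term of the $R$-lower central (or dimension) series. That happens exactly when $\mathrm{gr}_I R[\pi]$ agrees with the free tensor algebra $T(V)$ through degree $p$. In bar terms, it happens exactly when $\Primary(L)_d=V^{\otimes d}$ for all $d\le p$. This criterion depends only on $\Primary(L)$, which gives the first claim.

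For the second claim, assume vanishing through $p$. The first nontrivial relations then occur in degree $p+1$ and are the leading terms $[\lambda_j,\mathpzc m_j]$. Their Magnus leading terms have coefficients $\bar\mu_{i_0\cdots i_{p-1}j}$, which are now well defined in $R$. So $\Primary(L)_{p+1}$ is the annihilator of the span of $n$ explicit Lie elements whose coefficients are exactly the length-$(p+1)$ $\bar\mu$-invariants.

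Equality of annihilators gives equality of these spans. Since $\Lambda\ne 0$, $R$ is finite, and submodules of a free module are recovered from their annihilators because $\mathbb Z_\Lambda$ is self-injective. Equality of spans by itself only says the relation vectors agree up to an invertible change of basis. To reach the componentwise "up to a unit" conclusion, I will use that relation $j$ has the form $[\,\cdot\,,X_j]$ and lies in the $j$-graded part, so the spans decompose component by component. I will need to check carefully that this decomposition holds. Within one component the span is cyclic, and over $\Z_\Lambda$ two generators of the same cyclic submodule differ by a unit.

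The main obstacle is this last step. I must show that equality of the annihilator subspaces forces the multidegree-$j$ relation vectors to generate the same cyclic submodule. That requires a careful description of $\mathrm{gr}_I$ over non-field rings like $\Z_\Lambda$ and $\Z$. It also requires knowing that torsion does not spoil the annihilator duality; this is why the unit statement is restricted to $\Lambda\neq0$.
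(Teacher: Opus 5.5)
\paragraph{The gap: the span of relations does not split by component.}
Your treatment of the first claim is fine. So is the identification of $\Primary(L)_{p+1}$ with the annihilator of $K=\operatorname{span}_{\Z_\Lambda}\{\rho_1,\dots,\rho_n\}$. Here $\rho_j=[X_j,P^{(j)}]$, and $P^{(j)}$ is the degree-$p$ part of the Magnus expansion of $\lambda_j$. One caveat: \cref{P: associated graded augmentation} is only stated over a field. Over $\Z_\Lambda$ you need Gadish's duality with $\Z_\Lambda$-coefficients, plus the vanishing of the $\operatorname{Ext}^1$ obstruction to dualizing the associated graded. That vanishing holds because $\Z_\Lambda[\pi]/I^{p+1}$ is free under your vanishing hypothesis.

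The genuine gap is the step you flagged, and it is not merely unchecked but false: $K$ does not decompose component by component.
\begin{itemize}
\item For $i_0\neq i_p$, the word $X_{i_0}X_{i_1}\cdots X_{i_p}$ occurs in $\rho_{i_0}$ (via $X_{i_0}P^{(i_0)}$) and also in $\rho_{i_p}$ (via $-P^{(i_p)}X_{i_p}$).
\item In fact $\sum_j\rho_j=0$. Comparing coefficients, this identity is exactly cyclic symmetry of the length-$(p+1)$ invariants; for the Borromean rings at $p=2$ it is the Jacobi identity. It mirrors the fact that $H^2(S^3\setminus L)$ is spanned by the differences $\tau_i-\tau_j$.
\item If the $\rho_j$ lay in complementary summands, this relation would force every $\rho_j$ to vanish.
\item A multigrading by letter content does not rescue the argument, since $\Primary(L)$ is not multigraded (cf.\ \cref{ex:same linking numbers}).
\end{itemize}
So your plan has no mechanism for extracting the individual modules $\Z_\Lambda\rho_j$, and hence the individual $\bar\mu$'s, from $K$.

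\paragraph{The missing idea: test multiples of a single monomial.}
Stop trying to recover relation vectors. Fix $i_0\neq i_p$ and $a\in\Z_\Lambda$, and pair $a[S_{i_0}|\cdots|S_{i_p}]$ against the $\rho_j$ (longitude index last, as in your notation):
\begin{itemize}
\item with $\rho_j$ for $j\notin\{i_0,i_p\}$ it pairs to zero;
\item with $\rho_{i_0}$ it pairs to $a\,\mu_{i_1\cdots i_pi_0}$;
\item with $\rho_{i_p}$ it pairs to $-a\,\mu_{i_0\cdots i_p}$.
\end{itemize}
By cyclic symmetry these two numbers coincide modulo $\Lambda$, which is valid because $\Lambda$ divides the indeterminacy. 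Your plan never uses this input, and without it membership only sees the ideal generated by both numbers. Hence
\[
\{a\in\Z_\Lambda : a[S_{i_0}|\cdots|S_{i_p}]\in\Primary(L)\}=\operatorname{Ann}_{\Z_\Lambda}(\mu_{i_0\cdots i_p}).
\]
For $\Lambda\neq0$, an element of $\Z_\Lambda$ is determined up to a unit by its annihilator. Multi-indices with $i_0=i_p$ reduce to this case by a cyclic rotation, or give $0$ if all indices coincide. No double-annihilator argument is needed.

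This is exactly how the paper concludes (\cref{prop:comparison with milnor} and \cref{C:Milnor}). The paper obtains the membership criterion $a\,\mu_{i_0\cdots i_p}\tau_{i_0i_p}=0$ differently: from May's identification of bar spectral sequence differentials with Massey products, together with the Porter--Turaev theorem. There $\tau_{i_0i_p}=\tau_{i_0}-\tau_{i_p}$ records the same two contributions. With this last step replaced, your Magnus/group-ring route would be a valid alternative derivation of that criterion.
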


This theorem provides one sense in which bar cohomology categorifies Milnor's invariants, with these classical numerical invariants encoded by evaluating cohomology on a homology class.   Our encoding is through a canonical embedding of a submodule, allowing linear combinations and making certain identities between Milnor invariants into formal consequences of structures on this submodule.  \cref{T:Milnor refinement} is proved as parts of stronger results, namely \cref{prop:comparison with milnor} and \cref{C:Milnor} in \cref{S: Milnor}. The relevant definitions and a more detailed account of the relationship between $\bar\mu$-invariants and $\Primary(L)$ are given in that section. In particular, \cref{C:Milnor} demonstrates a use of $\Primary(L)$ to determine $\bar \mu$-invariants of $L$ up to units in the rings in which they are defined. 

\subsubsection{Beyond Milnor invariants}
The leading-term invariant $\Primary(L)$ gives additional information that is invisible to $\bar\mu$-invariants. It thus generalizes Milnor's invariants. 
It does so in a sense by allowing higher invariants to be defined not only given vanishing of lower ones but also given equality or more generally vanishing linear combinations. 

\begin{figure}
    \includegraphics[scale=0.55]{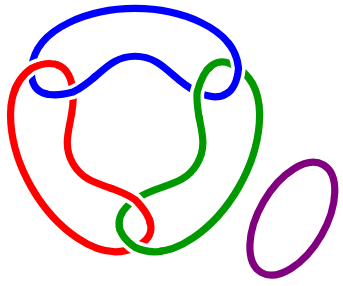} 
    \hspace{2cm}
    \includegraphics[scale=0.55]{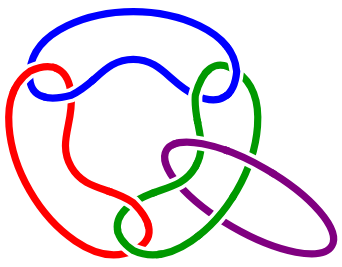} 
\caption{Two 4-component links distinguished by our leading-term invariant $\Primary(L)$}
\label{F:two-4-comp-3-chains}
\end{figure}

\begin{example}\label{ex:intro-primary} 
Consider the two $3$-chain links together with a fourth component shown in Figure \cref{F:two-4-comp-3-chains}.  Order the components so that $1\leftrightarrow$ \blue{blue}, $2\leftrightarrow$ \red{red}, $3\leftrightarrow$ \green{green}, and $4\leftrightarrow$ \purple{purple}.
    For the link $L$ on the left, the leading-term invariant $\Primary(L)$ contains $[S_1|S_2|S_4]+[S_2|S_3|S_4]+ [S_3|S_1|S_4]$, while for the link $L'$ on the right, $\Primary(L')$ does not contain this element. It follows that the two links are not isomorphic.  Since the pairwise linking numbers of the $3$-chain are all $1$, no (classical) triple linking numbers can be defined for them; likewise, no higher-order $\bar\mu$-invariants can be defined for any sublink containing more than one of the three components in the 3-chain.  But in both $L$ and $L'$, the fourth component forms an unlink with any other component, so the (classical) Milnor invariants cannot distinguish these links.
    On the other hand, the more recently discovered total triple linking invariant of Davis, Nagel, Orson, and Powell \cite{DNOP20} does distinguish these links.
    
    The calculation of $\Primary(L)$ is explicit and geometric: pick planar Seifert surfaces for all components, denoted $\Sigma_i$ for $1\leq i\leq 4$ and representing the respective cohomology classes $S_i$, and observe that $(\Sigma_1\cap \Sigma_2) + (\Sigma_2\cap \Sigma_3) + (\Sigma_3\cap \Sigma_1) = \partial \Sigma_{123}$ is the boundary of a surface spanning the hole in the middle of the $3$-chain (see \cref{fig:intro-surfaces}). The link on the right has the purple component piercing the secondary surface $\Sigma_{123}$ once, thus excluding the polynomial $([S_1|S_2]+[S_2|S_3]+[S_3|S_1])|[S_4]$ from the leading-term invariant. See \cref{ex:3-chain-with-4th} for further details, and \cref{ex:hopf-linked-3-chains} and \cref{ex:6-comp-sum-Wh-double-Borr} for more elaborate examples that showcase the strengths of $\Primary(L)$.
    \begin{figure}[h!]
        \centering
        \includegraphics[scale=0.45]{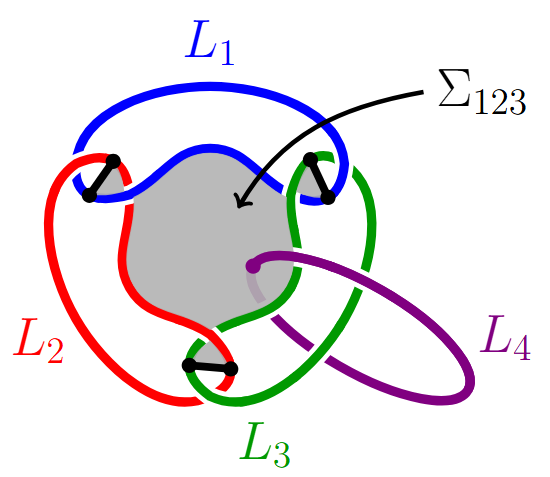}

        \caption{A secondary surface cobounding the intersections of the Seifert surfaces of the $3$-chain.}
        \label{fig:intro-surfaces}
    \end{figure}
\end{example}

\begin{remark}
For applications as well as comparison with Milnor's $\bar \mu$-invariants, one would hope to define the invariant $\Primary(L)$ as a Hopf algebra over rings of the form $R=\Z_\Lambda$ for arbitrary $\Lambda\in \Z$. However, when the coefficient ring is not a PID, the leading-term invariant $i_L(\Primary(L))\subseteq R\langle S_1,\ldots,S_n\rangle$ does not form a sub-coalgebra -- see \cref{ex:non-coalgebra}. Nonetheless, the subspace $i_L(\Primary(L))\subseteq R\langle S_1,\ldots,S_n\rangle$ is well-defined, closed under the shuffle product, and provides information about the $\bar\mu$-invariants of $L$.

\end{remark}

\subsection{Alternate perspectives and future work}\label{subsequent work}

One can also frame our approach through the notion of polynomial function on a group, originally due to Passi \cite{passi_polynomial_1968}.
\begin{definition}
    Let $G$ be a group, $R$ a ring, and $I$ the augmentation ideal of $R[G]$. A function $f:G\to R$ is {\bf polynomial of degree $n\in \N$} if its $R$-linear extension $R[G]\to R$ annihilates $I^{n+1}$.
\end{definition}

For example, group homomorphisms to $R$ with its additive structure
are linear functions, that is, polynomial of degree $1$.
The ring of polynomial functions on $G$ forms a commutative conilpotent Hopf algebra under $(\Delta f)(x\otimes y) := f(xy)$ and $(f*g)(x) := f(x)g(x)$. There are  equivalent notions of polynomiality, involving discrete derivatives\footnote{In particular, the ``derivative of a function $f : G \to R$ in the direction of $a \in G$'' is defined as $\Delta_af (g) = f(ga) - f(g)$.} or conilpotence.

Key examples of polynomial functions are coefficients
of Magnus' expansion for free groups, which Milnor used to define his link invariants. Milnor's original definition is thus the evaluation of polynomial functions on longitudes of components. Substantial
difficulties arise both because the link groups
in question are not free and 
because longitudes are not naturally based.
One must therefore restrict to evaluations on polynomial functions that are well-defined on the link group while also being constant on conjugacy classes, which requires vanishing evaluation of lower-order functions.

With the applications of the current paper in mind, the second author shows in \cite{Gad23} that degree zero bar cohomology produces all polynomial functions on any fundamental group, giving a concrete generalization
of Magnus expansion for presented groups through ``letter braiding.'' In the present work we focus on bar cohomology itself as a link invariants.  Moreover, the bar cohomology perspective leads
to a canonical system of ``coordinates'' up to first order, defined
though intersections with Seifert surfaces.

\begin{definition}

    Fix ${\mathcal S} = \{S_1, \cdots, S_m\}$, a finite collection of oriented immersed surfaces in the smooth manifold $N$, and let $\Omega^{\mathcal S} N$
    be the space of immersed smooth based curves in $N$ whose intersections with all $S_i$ are transverse.

    For a geometric cochain bar monomial $b = [S_1 | \cdots | S_n]$, define the {\bf intersection function}, whose value on a curve $c  \in \Omega^{\mathcal S} N$ is 
    \[
    \iota_b(c) = \sum_{\{ t_1 < t_2 < \cdots < t_n \; | \; c(t_j) \in S_{j} \}} \operatorname{sgn}(c(t_1)\pitchfork S_1)\cdots \operatorname{sgn}(c(t_n)\pitchfork S_n)
    \]
    where $\operatorname{sgn}(c(t_j)\pitchfork S_j)$ is the sign of the intersection of the curve and the surface at the point $c(t_j)$.

\end{definition}

For example, $\iota_{[S | T | S]}$  is the signed count of the number of times a curve crosses through $S$ and then later through $T$ and then through $S$ again. These functions were considered by Hain \cite{Ha85} to give a geometric interpretation of Milnor's invariants (see \cite{hebda_approach_2012,penna_higher_2002} for this perspective). 
In general, some linear combinations of these surface-intersection counts descend to well-defined functions on the link group $G = \pi_1\left(M \setminus L, *\right)$, in which case they are polynomial. In future work, we will show that the resulting functions on the fundamental group are equivalent to those calculated through the geometric cochain bar complex defined in Appendix~\ref{A:geometric}, and by comparison with
singular cohomology give all polynomial invariants.

Drawing from  these perspectives and the ones developed
in this paper, we see many possible applications and extensions of this theory, starting with two forthcoming 
papers mentioned above.

\begin{itemize}
    \item The leading term invariant developed here captures the associated graded of the bar construction as a canonically embedded object.  Equivalently by \cref{P: associated graded augmentation} it captures the associated graded of the augmentation ideal filtration of the fundamental group ring. Classification of bialgebras with the same associated graded is captured by a form of deformation theory developed by Gerstenhaber and Schack \cite{Gerstenhaber-Schack}.  We use this to define concrete geometric link invariants, which capture 
    the ``two step associated graded'' of the augmentation
    ideal filtration of the group ring of the fundamental group.\\

    \item Invariants in geometric topology which are organized around the lower central series filtration of the fundamental group are often concordance invariants.  
    In forthcoming work we find this is the case for our bar cohomology invariants, which are naturally organized around the augmentation ideal of the group ring, as well.  More substantially, we lift this concordance invariance 
    to give maps from  spaces
    of block embeddings for a large class of spaces
    of links to classifying spaces 
    based on the Hopf algebras which define our theory.\\

    \item In work in progress, the second author has classified the rings of polynomial functions of links for which all pairwise linking numbers are one,
    and thus Milnor invariants give 
    no further information.  This work illustrates the more generic phenomena
    which arise when most or all Milnor invariants
    do not vanish, in which case one can make use of their linear
    dependence. \\ 

    \item Starting from a link diagram and
    using ``mostly flat'' Seifert surfaces, these invariants can be calculated purely combinatorially.
    This will lead to  programmable invariants, which should be practically useful once algorithms which make use of structure in 
    the bar construction are implemented.\\

    \item While one subsequent paper explores secondary invariants arising from the Gerstenhaber-Shack cohomology, we have also started to develop tertiary and higher extension invariants, comparing  filtered Hopf algebra structures past their first-order deformations.  \\

    \item Polynomial functions defined by bar cocycles, through intersection with surfaces, could extend the reach of Milnor's classical approach. For example, from this perspective, to grasp the difference between the two links in \cref{ex:3-chain-with-4th} one can use the ``linking number with the length-three chain formed by the first three components,'' which is the evaluation of a quadratic function. We conjecture that evaluation of degree $n$ polynomial functions on longitudes could be equivalent to knowledge of the degree $n+1$ leading term invariant, but based on this example expect easier application of longitude evaluation in some settings.\\

    \item Another planned future direction is to study in more detail the setting of links in arbitrary 3-manifolds, potentially connecting to recent work of Kuzbary \cite{Kuzbary:AGT}
and Stees \cite{stees_milnors_2023}.  There we already
have a conjectural version of the leading-term invariant.\\

\item There is potential for refinement by replacing
the Bar construction, which only requires associativity,
with constructions which involve homotopy commutativity.  
There should also be ``savings'' as one can quotient by
tautological classes, as the Harrison complex does
in characteristic zero.\\

    \item Concordance invariants of links have been studied extensively using surfaces in a distinct way,
    namely through gropes, Whitney towers, and similar constructions -- See \cite{schneiderman2005whitney,conant2012whitney,kim2015whitney}. Whitney towers study link groups through their nested commutators, in contrast with the bar construction which does this via the augmentation ideal filtration. While there are distinctions in group theory between the lower central series and the dimension series, especially with
    varied coefficients, there are comparisons to be made.  A tantalizing question is whether there
    are comparisons to be made  between the two central series through surface-level 
    constructions.
\end{itemize}


\subsection{Conventions and models}

Throughout this paper, $R$ is a unital commutative Noetherian ring. In various contexts, we get stronger results by restricting $R$ further to be a PID or field,
and we will always be explicit about such restrictions.

All tensor products are  taken over $R$. Given an $R$-module $M$, its tensor coaglebra is $T^c(M) = \bigoplus_{p\geq 0} M^{\otimes p}$. The reader will note that this looks identical to the tensor algebra on $M$, except that here it is equipped with a deconcatenation coproduct rather than a concatenation product; see equation \eqref{eq:tensor coproduct} below.  
All the coalgebras we will consider are counital and conilpotent. In the category of such coalgebras, $T^c(M)$ is the cofree coalgebra on $M$, meaning that the obvious projection $pr:T^c(M)\to M$ defines a bijection
\[
\Hom_{\mathrm{coAlg}}(C,T^c(M)) \cong \Hom_{R}(C,M).
\]
For an introduction to coalgebras, see \cite[Section 1.2]{Loday-Vallette}.

The algebraic input to our machinery is cochain algebras with coefficients in $R$.
A {\bf differential graded algebra} over $R$, or {\bf dg $R$-algebra} for short, is a bounded-below, graded, associative, unital $R$-algebra $A^\bullet$ equipped with a square-zero derivation of degree $+1$.  In a graded setting, some authors use the term ``(co)commutative'' to mean ``graded-(co)commutative''; since we will focus on elements of degree 0, we use these terms to mean ``strictly (co)commutative.''

The bar construction is a central character in our story. 
See \cref{S: bar cohomology} for this construction on associative algebras, and \cref{A:geometric} for homotopy coherent generalizations. This construction is robust in the sense that it is amenable to using varied cochain models for link complements, e.g., singular or smooth cochains, differential forms, or the geometric cochains \cite{FMS-foundations} discussed in the next paragraph. For cochain models related by a quasi-isomorphism of associative or $A_\infty$-algebras, there are quasi-isomorphisms of bar constructions and thus the resulting invariants agree. For example, the classical de Rham morphism from differential forms to smooth cochains is not a homomorphism of associative algebras, but Gugenheim \cite{Gu76} showed (in slightly different language) that it extends to an $A_\infty$-quasi-isomorphism, which induces a quasi-isomorphism of bar complexes. In this case, our resulting invariants therefore coincide for different cochain models.

For performing computations geometrically -- using intersections of surfaces -- our preferred cochain model is geometric cochains $C^*_\Gamma(-)$ \cite{FMS-foundations}. Geometric cochains possess only a Leinster partial algebra structure (see \cite{McC06} for definitions and for the PL-chains analog), but this is sufficient to obtain an analog of the bar complex developed by the first author \cite{GBF47}, described in \cref{A:geometric}. The question remains whether this geometric bar construction is quasi-isomorphic to the one obtained through singular cochains. We recently learned about forthcoming dissertation work of Pizzi \cite{Pizzi} in which he constructs such a natural quasi-isomorphism, so the two bar cohomologies should indeed coincide.

Our proofs will generally be written in terms of an arbitrary cochain model $C^*(X;R)$ with a globally defined product, but we will often frame examples in the language of geometric cochains, illustrating that the two machineries (geometric cochains and bar cohomology invariants) combine to generate invariants that are visually computable. When computing our invariants from geometric cochains, the reader is asked to interpret the bar construction as the one in the sense of \cref{A:geometric}. A moment's reflection shows that our arguments go through with minimal changes.

\subsection{Acknowledgments}
This project has been gestating for quite some time and has benefited from many useful conversations and partial collaborations. In particular, we acknowledge the profound contributions of Aydin Ozbek, who tragically passed away during the writing of this work. We owe a debt of gratitude to Anna Cepek and Ziyal Jandrasi, who handed this project off to us in its infancy. We thank Andres Fernandez Herrero for countless useful conversations about Hopf algebras and beyond. The project benefited from the ICERM workshop ``Links in Dimensions 3 and 4,'' where meaningful exchanges of ideas took place. 
Lastly, this work benefited from conversations with Google Gemini, which assisted us in finding references and catching small algebraic mistakes.

RK  was supported by the Louisiana Board of Regents Support Fund, contract number LEQSF (2019-22)-RD-A-22 and by the National Science Foundation, Award No.~DMS-2405370.
GF was partially supported by  grant \#839707 from the Simons Foundation.  This work was initiated by a visit of DS by NG in 2022, funded by the University of Oregon and NSF DMS No. 2039316.

\section{Hopf algebraic invariants via the bar construction}
In this section we set up the relevant terminology and establish the invariance of the associated graded of the bar cohomology, which will be our primary, ``leading-term,'' invariant. In later sections we will extract further link invariants from it and establish isomorphism invariance.

\subsection{Bar cohomology}\label{S: bar cohomology}

Recall that a {\bf differential graded  $R$-algebra $A$} (or {\bf dg-$R$-algebra} for short) is a bounded-below cochain complex of $R$-modules $(\cdots \to A^i\to A^{i+1}\to \cdots )$ with $A^i=0$ for $i<N$ for some $N$, equipped with a graded map $A\otimes_R A\to A$ making $A$ into a unital associative algebra and satisfying the Leibniz rule with respect to the differential. An {\bf augmentation} of $A$ is an $R$-algebra homomorphism $\epsilon:A\to R$, splitting the unit: $A= R\oplus \bar A$ with the $R$ summand in degree $0$. The kernel $\bar A = \ker \epsilon$ is called the {\bf augmentation ideal}.
\begin{definition}
\label{def:bar complex}
    The {\bf bar construction} of an augmented differential graded associative $R$-algebra $A$ with augmentation ideal $\bar A$ is the totalization of the second quadrant bicomplex\footnote{Our definition here coincides with the two-sided bar complex $\BBar(R,A,R)$ using the $A$-module structure on $R$ induced by the augmentation map.  The equivalence uses the fact that $\bar A^{\otimes p} \cong R \otimes_R \bar A^{\otimes p}\otimes_R R$, and, for the boundary formula, the fact that $\bar A$ acts by zero on $R$. When $A$ is flat over $R$, the cohomology of this complex computes $\operatorname{Tor}_A^*(R,R)$.}
    \[
    \BBar(A) := \bigoplus_{p\geq 0} \bar A^{\otimes p}
    \]
    in which the $(-p,q)$-bigraded part is the summand $(\bar A^{\otimes p})_q$ of cohomological grading $q$ \cite{EM53,McC}. Elementary tensors $a_1\otimes \cdots \otimes a_p$ will be denoted with brackets and with the tensor product symbols replaced by bars: $[a_1|\cdots|a_p]$. By convention $\bar A^{\otimes 0}=R$, with the empty tensor identified with $1\in R$.
    
    The vertical and horizontal differentials of the bicomplex are
    \[
    d_v[a_1|\ldots|a_p] := \sum_{i=1}^p (-1)^{|a_1|+\ldots + |a_{i-1}|-(i-1)} [a_1|\cdots | da_i | \cdots | a_p]
    \]
    \[
    d_h[a_1|\cdots|a_p] := \sum_{i=1}^{p-1} (-1)^{|a_1|+\ldots+|a_i|-i}[a_1|\cdots |a_ia_{i+1}|\cdots |a_p].
    \]
    In particular, when all $a_i$'s have cohomological degree $1$, the differentials involve no signs.

    Denote $\BBar(A) := \operatorname{Tot} \left(\bigoplus \bar A^{\otimes p} ,d_v,d_h\right)$, the totalization, which is equipped with the total differential $d_{\BBar}:=d_v+d_h$.  Then denote $H^q_{\BBar}(A) := H^q(\BBar(A), d_{\BBar})$.
    In particular, when $\bar A^{q} = 0$ in nonpositive degrees $q\leq 0$, we have $Bar^0(A) = \bigoplus_{p\geq 0}(A^1)^{\otimes p}$.

    For a pointed topological space $X$ and a coefficient ring $R$, denote $\BBar(X;R) := \BBar(C^*(X;R))$, the bar construction of the singular cochain algebra of $X$ with coefficients in $R$, augmented by restriction to the basepoint of $X$. Similarly, denote $H^q_{\BBar}(X;R) := H^q_{\BBar}(C^*(X;R))$.
\end{definition}

 A geometric model, using geometric cochains instead of singular ones, as discussed in \cref{A:geometric}, requires a variant of the bar construction that we discuss in that appendix.

The bar construction has a 
deconcatenation coproduct
\begin{equation}\label{eq:tensor coproduct}
\Delta [a_1|\cdots|a_p] := \sum_{i=0}^p [a_1|\cdots | a_i]\otimes [a_{i+1}|\cdots |a_p].
\end{equation}
When $i=0$ or $p$, the empty monomial is  $1\in R$.
The coproduct is strictly coassociative and satisfies the Leibniz rule with respect to the total differential $d_{\BBar}$. 

The following statement is clear by construction:
\begin{lemma}
    If $f:A\to A'$ is a homomorphism of augmented dg $R$-algebras, that is, a graded $R$-algebra homomorphism that commutes with the respective differentials and augmentations, then it induces a homomorphism of dg coalgebras $\BBar(f):\BBar(A)\to \BBar(A')$.
    \qed
\end{lemma}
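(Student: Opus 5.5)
The plan is to define $\BBar(f)$ levelwise by applying $f$ in each tensor slot, and then check compatibility with the augmentation ideals, the two differentials, and the coproduct in that order.

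First, $f$ restricts to the augmentation ideals. Since $\epsilon'\circ f=\epsilon$, we have $f(\bar A)\subseteq \bar A'$. This gives, for each $p\geq 0$, a map
\[
f^{\otimes p}:\bar A^{\otimes p}\to \bar A'^{\otimes p},\qquad [a_1|\cdots|a_p]\mapsto [f(a_1)|\cdots|f(a_p)].
\]
On the $p=0$ summand it is the identity of $R$. Because $f$ is graded, this map preserves the bigrading $(-p,q)$. Set $\BBar(f):=\bigoplus_p f^{\otimes p}$.

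Next, compatibility with the differentials. The signs in $d_v$ and $d_h$ depend only on the degrees $|a_j|$ and the positions $i$, and these are preserved by $f$. Since $f\circ d=d'\circ f$, applying $f$ in every slot commutes with $d_v$. Since $f(a_ia_{i+1})=f(a_i)f(a_{i+1})$, it also commutes with $d_h$. Hence $\BBar(f)$ commutes with $d_{\BBar}=d_v+d_h$.

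Finally, compatibility with the coproduct. Applying $f$ slotwise commutes with splitting a tensor at position $i$, so
\[
(\BBar(f)\otimes\BBar(f))\circ\Delta=\Delta\circ \BBar(f).
\]
The map $\BBar(f)$ also preserves the counit, which is projection onto the $p=0$ summand $R$.

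There is no real obstacle here. The only point needing care is the first step: the maps in each slot must land in $\bar A'$, and this is exactly where compatibility of $f$ with the augmentations is used.
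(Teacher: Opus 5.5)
Your proposal is correct and is the routine verification the paper has in mind. The paper gives no written argument, only that the lemma is clear by construction; your slotwise definition of $\BBar(f)$, with the checks that $f(\bar A)\subseteq \bar A'$ and that it commutes with $d_v$, $d_h$, the deconcatenation coproduct and the counit, is that construction spelled out.
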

When working over a field, the K\"unneth formula implies that $H^*_{\BBar}(A)$ is a coalgebra under the induced coproduct
\[
H^*(\BBar(A))\xrightarrow{H^*(\Delta)} H^*(\BBar(A)\otimes_R \BBar(A)) \cong H^*(\BBar(A))\otimes_R H^*(\BBar(A)).
\]
In this work we are interested in the bar construction defined over more general commutative rings, such as $\Z_m$, so we need to work harder to establish a similar algebraic structure. In particular, we show below in \cref{prop:H0Bar is Hopf} that for link complements, $H^0_{Bar}(X;R)$ embeds into the cofree coalgebra.

A key tool in working with the bar construction is its natural filtration by tensor weight. This is part of a broader structure defined for arbitrary augmented coalgebras, discussed next.
\begin{definition}\label{def:weight filtration}
    An {\bf augmentation}\footnote{In the literature, this is commonly called a {\bf co-augmentation}.} of a coalgebra $C$ is a choice of element $1\in C$ that evaluates to $1\in R$ under the counit. Given an augmentation, define the {\bf reduced coproduct}
    \[
    \barDelta(x) = \Delta(x) - (x\otimes 1 +1\otimes x).
    \]
    
    Augmented coalgebras are naturally filtered by vanishing of the reduced coproduct: define iterates of this operation recursively by $\barDelta^1 = \barDelta$ and $\barDelta^{p+1} = (\barDelta\otimes \id^{\otimes (p-1)})\circ \barDelta^{p}$. For every augmented coalgebra $C$, define the {\bf weight filtration} by
    \[
    F_p C := \ker(\barDelta^{p}).
    \]
    We say $C$ is {\bf conilpotent} if $\bigcup_{p\geq 0}F_p C = C$, i.e., if every $c\in C$ is annihilated by some finite iterate of the reduced coproduct.
\end{definition}
Note that every homomorphism of augmented coalgebras $C_0\to C_1$ preserves the respective weight filtrations. In particular, if $C$ is a Hopf algebra with product $m:C\otimes C\to C$, then $m$ is a homomorphism of augmented coalgebras, and thus every such product is filtration-preserving. 

For the bicomplex $\BBar(A)$, the weight filtration coincides with tensor degree: $F_p \BBar(A) = \bigoplus_{p'\leq p} \bar A^{\otimes p'}$.  
If we abbreviate $F_p:=F_p \BBar(A)$,  its associated graded has summands $F_p/F_{p-1}\cong \bar A^{\otimes p}$. Correspondingly, suppose $a_p+a_{p-1}+\ldots+a_0\in \BBar(A)$ is a (nonhomogeneous) tensor with $a_i\in \bar A^{\otimes i}$ and $a_p\neq 0$. Define its {\bf top-weight} or {\bf leading} term to be $a_p\in \bar A^{\otimes p}$, which can be defined intrinsically using the coproduct
\[
a \longmapsto pr^{\otimes p}\barDelta^{p-1}(a) \in \bar A^{\otimes p},
\]
where $pr:\BBar(A)\to \bar A$ is the projection onto the tensors of weight 1, i.e., the {\bf cogenerators}. 

The weight filtration on a differential graded conilpotent coalgebra $C$ gives rise to a second-quadrant spectral sequence called the {\bf weight spectral sequence}.
We also call it the {\bf bar spectral sequence} when $C=\BBar(A)$ (in which case it also arises as the spectral sequence of $\BBar(A)$ as a double complex).  
It helps us prove that $\BBar(-)$ is a quasi-isomorphism invariant in some cases.

\begin{lemma}[Quasi-isomorphism invariance]\label{lem:quasi-iso invariance} 
    If $h:A_0\to A_1$ is a quasi-isomorphism of augmented dg-$R$-algebras whose underlying $R$-modules are flat, then the induced map $\BBar(h):\BBar(A_0)\to \BBar(A_1)$ is a quasi-isomorphism.
\end{lemma}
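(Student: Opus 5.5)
We will compare the two bar constructions through their weight filtrations, using the bar spectral sequence introduced just above. The map $\BBar(h)$ is induced by $h^{\otimes p}$ on each summand $\bar A_0^{\otimes p}\to \bar A_1^{\otimes p}$. It preserves tensor weight, so it is a map of filtered complexes $F_p\BBar(A_0)\to F_p\BBar(A_1)$ with $F_p=\bigoplus_{p'\le p}\bar A^{\otimes p'}$. The plan is first to show that $h$ induces a quasi-isomorphism on each associated graded piece, and then to pass to the whole complex.

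\textbf{Step 1: the augmentation ideals.} Since $h$ commutes with the augmentations, it restricts to a map $\bar h:\bar A_0\to\bar A_1$. The splitting $A_i=R\oplus\bar A_i$ is respected by $h$, and on the $R$ summand $h$ is the identity. Hence $\bar h$ is a quasi-isomorphism. Moreover, $\bar A_i$ is a direct summand of the flat module $A_i$, so it is flat.

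\textbf{Step 2: tensor powers.} On $F_p/F_{p-1}\cong \bar A^{\otimes p}$, only $d_v$ survives, which is the ordinary tensor product differential (with signs shifted by the suspension). We will show that $\bar h^{\otimes p}$ is a quasi-isomorphism by induction on $p$, factoring it as
\[
\bar A_0^{\otimes p}\to \bar A_1\otimes\bar A_0^{\otimes(p-1)}\to \bar A_1^{\otimes p}.
\]
Each factor is $\bar h\otimes \id_C$ or $\id_C\otimes \bar h$ with $C$ a bounded-below complex of flat modules. The cone of $\bar h$ is then an acyclic, bounded-below complex of flat modules, and tensoring it with a complex of flat modules keeps it acyclic. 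This is the standard Künneth/flatness argument: an acyclic bounded-below flat complex has flat cycle modules, so it is a filtered colimit of split-exact pieces, or one can argue with a spectral sequence. This is where the flatness hypothesis is used, and it is the main technical point of the proof.

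\textbf{Step 3: convergence.} This is the obstacle I expect. The weight filtration is exhaustive, $\BBar=\bigcup F_p$, and bounded below ($F_{-1}=0$). However, the totalization is a direct sum of a second-quadrant bicomplex, so in a fixed total degree infinitely many $p$ can contribute, and the spectral sequence need not converge strongly in general. I would avoid spectral sequence convergence issues as follows.

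1. By induction on $p$, using the short exact sequences $0\to F_{p-1}\to F_p\to \bar A^{\otimes p}\to 0$, the five lemma shows that $F_p(h)$ is a quasi-isomorphism for every $p$.
2. Since homology commutes with filtered colimits (direct limits of $R$-modules are exact), $H^*(\BBar(h))=\operatorname{colim}_p H^*(F_p(h))$ is an isomorphism.

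This colimit argument sidesteps convergence entirely, so the only real content is Step 2.
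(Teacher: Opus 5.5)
Your argument is essentially the paper's: pass to $\bar h$, prove $\bar h^{\otimes p}$ is a quasi-isomorphism by factoring it through the mixed tensor powers and using flatness, then compare along the weight filtration. Your Step~3 (five lemma on $0\to F_{p-1}\to F_p\to F_p/F_{p-1}\to 0$, then commuting cohomology with the colimit) is a correct, self-contained substitute for the paper's appeal to the Eilenberg--Moore comparison theorem from Weibel. That theorem already applies here, because the weight filtration is exhaustive and has $F_{-1}=0$, hence is complete. So the convergence problem you anticipate does not actually arise.

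One justification in Step~2 needs fixing. The standard fact that acyclic flat complexes have flat cycles is for complexes bounded \emph{above} in cohomological degree. There each cycle module is the kernel of a surjection between flat modules. In a complex $0\to K^a\to K^{a+1}\to\cdots$ the cycles are instead cokernels of injections between flat modules, and their flatness is not automatic in general.

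What you need is simpler. Both $\operatorname{cone}(\bar h)$ and $C$ are bounded below, so each total degree of $\operatorname{cone}(\bar h)\otimes C$ involves only finitely many terms $\operatorname{cone}(\bar h)^i\otimes C^j$. Hence the filtration by $C$-degree is finite in each degree. Its graded pieces $\operatorname{cone}(\bar h)\otimes C^j$ are acyclic because each $C^j$ is flat. This is presumably the spectral-sequence argument you had in mind. The paper reaches the same conclusion differently: it truncates the flat factor from above, which does not change a fixed cohomological degree, and then uses that bounded complexes of flat modules are K-flat.
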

\begin{proof}
    First, for $i=0,1$, observe that since $A_i$ is flat over $R$, then its direct summand $\bar A_i$ is also flat. Consider the map induced by $h$ between the two weight spectral sequences. On the $E_0$-page, it is the map induced on tensor powers $h^{\otimes p}:\bar A_0^{\otimes p}\to \bar A_1^{\otimes p}$.

    But since $\bar A_i$ is flat over $R$ and bounded below, the operation $(-)\otimes_R \bar A_i$ preserves quasi-isomorphisms of bounded-below complexes. Indeed, in any cohomological degree the tensor product complex involves only finitely many terms from each complex, so to prove that a map $B\to B'$ induces an isomorphism $H^q(B\otimes _R A_i)\to H^q(B'\otimes _R A_i)$ on $q$-th cohomology we may assume that $A_i$ is bounded above; replacing terms of extremely high cohomological degree by zero has no effect on degree $q$. But a bounded-above complex of flat modules is $K$-flat \cite[\href{https://stacks.math.columbia.edu/tag/06XY}{Section 06XY}]{stacks-project}, meaning that tensoring with it is quasi-isomorphism invariant and in particular it induces an isomorphism on $q$-th cohomology. Since $q$ was arbitrary, $(-)\otimes_R A_i$ preserves quasi-isomorphisms.

    By induction, it follows that the maps $(\bar A_0^{\otimes i-1}\otimes \bar A_0 \otimes \bar A_1^{\otimes j})\to (\bar A_0^{\otimes i-1}\otimes \bar A_1 \otimes \bar A_1^{\otimes j})$ are quasi-isomorphisms, showing that
    \[
    \bar A_0^{\otimes p} \to \ldots \to (\bar A_0^{\otimes i}\otimes \bar A_1^{\otimes j}) \to (\bar A_0^{\otimes i-1}\otimes  \bar A_1^{\otimes j+1}) \to \dots \to \bar A_1^{\otimes p}
    \]
    is a quasi-isomorphism. Therefore, $\BBar(h)$ induces an isomorphism on the $E_1$-pages, and a standard spectral sequence argument shows that it must already have been a quasi-isomorphism \cite[Theorem 5.5.11]{WEIB}.
\end{proof}

\begin{corollary}[Topological invariance]
    The functor $X\mapsto \BBar(X;R)$ is invariant under $R$-cohomology isomorphisms. That is, if $f:X_0\to X_1$ is a pointed continuous map of topological spaces, and $f$ induces an isomorphism in cohomology with coefficients in $R$, then $\BBar(f^*): \BBar(X_1;R)\to \BBar(X_0;R)$ is a quasi-isomorphism.
\end{corollary}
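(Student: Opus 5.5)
The corollary follows from \cref{lem:quasi-iso invariance}: one checks that its hypotheses hold for the cochain map $f^*$, and then applies the lemma directly.

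\textbf{Step 1 (the induced map).} The induced map $f^*\colon C^*(X_1;R)\to C^*(X_0;R)$ on singular cochains is a homomorphism of dg $R$-algebras, since the cup product is natural. Because $f$ is pointed, $f^*$ commutes with the augmentations given by restriction to basepoints, so it is a map of augmented dg algebras. By hypothesis it is a quasi-isomorphism: it induces an isomorphism $H^*(X_1;R)\to H^*(X_0;R)$.

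\textbf{Step 2 (flatness).} For a space $X$, $C^q(X;R)=\Hom_\Z(C_q(X),R)\cong \prod_{\sigma} R$, a product indexed by the singular $q$-simplices of $X$. Since $R$ is Noetherian (the standing assumption of the paper), an arbitrary product of flat $R$-modules is flat; this is Chase's theorem, as $R$ is coherent. Hence every $C^q(X;R)$ is flat over $R$. In addition, $C^*(X;R)$ is bounded below, since it vanishes in negative degrees.

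\textbf{Step 3 (conclusion).} With Steps 1 and 2 in hand, \cref{lem:quasi-iso invariance} applies to $h=f^*$ and shows that $\BBar(f^*)\colon\BBar(X_1;R)\to\BBar(X_0;R)$ is a quasi-isomorphism.

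The only point that needs care is flatness in Step 2. Infinite products of flat modules are not flat over an arbitrary ring, so the argument genuinely uses the Noetherian (coherence) hypothesis on $R$.

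If one wishes to avoid Chase's theorem, there are two fallbacks. The first is to note that the proof of \cref{lem:quasi-iso invariance} only uses that tensoring with $\bar A_i$ preserves quasi-isomorphisms, and to verify that property directly. The second is to replace singular cochains by a quasi-isomorphic flat model. Invoking Chase's theorem is the cleanest route.
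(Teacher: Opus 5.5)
Your proposal is correct and is essentially the paper's proof. The paper also gets degree-wise flatness of $C^q(X;R)\cong\prod_{\sigma}R$ from Chase's theorem using that $R$ is Noetherian, notes that $f^*$ is a map of augmented dg $R$-algebras, and then applies \cref{lem:quasi-iso invariance}.
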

\begin{proof}
    Recall our convention that the ring $R$ is assumed to be Noetherian. For every space $X$ and in every cohomological degree, singular cochains are
    \[
    C^q(X;R) = \Hom( R^{\oplus X_q}, R) = \prod_{\sigma \in X_q} R
    \]
    where $X_q$ is the set of singular $q$-simplices in $X$. By a result of Chase \cite[Theorem 2.1]{Chase-infiniteproducts}, if $R$ is a Noetherian ring, an infinite product of $R$ is a flat $R$-module, so $C^*(X;R)$ is degree-wise flat. 
    
    Given a pointed continuous map $f:X_0\to X_1$, the pullback on $R$-cochains $f^*:C^*(X_1;R)\to C^*(X_0;R)$ is a homomorphism of augmented $R$-algebras. Lemma \ref{lem:quasi-iso invariance} now implies that $f^*$ induces a quasi-isomorphism on the bar constructions.
\end{proof}

We will not recall the definitions of $E_\infty$-algebras here, but the reader should keep in mind that these are the homotopy coherent analogs of commutative algebras of which simplicial and singular cochain algebras are examples. Geometric cochains are an example of a Leinster homotopy commutative algebra (see \cref{A:geometric}), which are equivalent to $E_\infty$-algebras and to which a modified version of the treatment below applies using an appropriate version of the bar construction -- see \cref{A:geometric}. An $E_\infty$-Hopf algebra is an $E_\infty$-algebra in the category of coassociative coalgebras. See the work of Fresse \cite{Fre07} for detailed definitions.
\begin{lemma}
\label{lem:product on bar}
    Let $R$ be a commutative ring, and let $A$ be an $E_\infty$-algebra over $R$ which is a flat $R$-module. Then the dg-coalgebra $\BBar(A)$ is equivalent to an $E_\infty$-dg-Hopf algebra through a quasi-isomorphism of filtered coalgebras. In particular, the cohomology $H^*_{\BBar}(A)$ is a commutative algebra and the product respects the weight filtration.
\end{lemma}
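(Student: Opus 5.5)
The plan is to appeal to the operadic theory of $E_\infty$-structures on bar constructions, mainly Fresse's work \cite{Fre07}, rather than build a product by hand. The basic input is classical. For a strictly commutative augmented dg-algebra $A$, the shuffle product
\[
[a_1|\cdots|a_p]*[a_{p+1}|\cdots|a_{p+q}]=\sum_{\sigma}\pm[a_{\sigma(1)}|\cdots|a_{\sigma(p+q)}]
\]
is a dg-algebra map for $d_{\BBar}$ and is compatible with the deconcatenation coproduct \eqref{eq:tensor coproduct}. This makes $\BBar(A)$ a commutative dg-Hopf algebra whose product has weight exactly additive: $F_p*F_q\subseteq F_{p+q}$. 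For a general $E_\infty$-algebra we replace strict commutativity by homotopy coherence, in four steps.

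First, use Fresse's theorem that the bar construction of an algebra over an $E_\infty$-operad $\mathcal E$ (for instance the Barratt--Eccles operad, which acts on singular cochains) carries a natural $E_\infty$-algebra structure in coalgebras. The structure maps are built from the $\mathcal E$-action by a shuffle-type formula generalizing the above. Second, check that the resulting operations respect the weight filtration. Each operation $\mathcal E(r)\otimes \BBar(A)^{\otimes r}\to \BBar(A)$ sends a tensor product of bar words of weights $p_1,\dots,p_r$ to a sum of words of weight at most $p_1+\cdots+p_r$, because the operations only interleave letters and apply $A$-operations to them, which can merge letters but never split them. The coproduct is deconcatenation, so it preserves weight. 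Third, if the $\mathcal E$-algebra $A$ is not literally in the form Fresse requires, rectify it. Choose a quasi-isomorphism of $E_\infty$-algebras $A\simeq A'$ with $A'$ flat, for example an $\mathcal E$-algebra that is a Barratt--Eccles model. Then \cref{lem:quasi-iso invariance}, or its $A_\infty$/$E_\infty$ analogue obtained by comparing weight spectral sequences on $E_1$ as in that proof, gives a filtered quasi-isomorphism of coalgebras $\BBar(A)\simeq \BBar(A')$. Flatness of $A$ is exactly what lets the Künneth-type step on $E_0$ go through.

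Fourth, pass to cohomology. An $E_\infty$-algebra has a cohomology ring that is graded-commutative and associative. Restricted to $H^0$ it is commutative in the strict sense fixed in the conventions, since elements of even degree commute without sign. Filtration compatibility descends, because $F_pH^*_{\BBar}$ is defined as the image of $H^*(F_p\BBar(A))$.

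The main obstacle I expect is the second step together with the flatness hypothesis over a general ring $R$. Fresse's constructions are often stated over a field, or for $\Sigma$-cofibrant operads with projective hypotheses. So the step I would secure first is that the shuffle-type operations are defined integrally at chain level and are filtration non-increasing. With that in hand, the rest is formal bookkeeping with the weight spectral sequence as in \cref{lem:quasi-iso invariance}.
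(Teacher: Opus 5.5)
Your plan is essentially the paper's proof. You invoke Fresse's $E_\infty$-Hopf structure on the bar construction of a cofibrant $E_\infty$-algebra, transport it along the filtered quasi-isomorphism of coalgebras $\BBar(h)\colon\BBar(P)\to\BBar(A)$ via \cref{lem:quasi-iso invariance}, where $h\colon P\xrightarrow{\sim}A$ is a cofibrant (not merely flat) replacement, and pass to cohomology using homotopy commutativity. The step you flagged as the main obstacle is handled formally in the paper, with no need to inspect the explicit form of Fresse's operations: the product is a coalgebra homomorphism, and coalgebra homomorphisms preserve the weight filtration $F_p=\ker(\barDelta^{p})$.
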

\begin{proof}
    Fresse \cite{Fresse_Hopf} shows that whenever $A$ is a cofibrant $E_\infty$-algebra over an arbitrary ground ring $R$, its bar construction $\BBar(A)$ has an essentially unique functorial $E_\infty$-algebra structure in the category of coassociative coalgebras in chain complexes. This is Fresse's definition of an $E_\infty$-Hopf algebra.  (He does not discuss antipodes of Hopf algebras, but these exist since the bar construction is graded, connected and conilpotent.)  The weight filtration is determined by the vanishing of the reduced coproduct $\barDelta$, and the multiplication $m$ is a coalgebra homomorphism, so the product is a filtered map.
    
    When $A$ is not cofibrant, we may construct a cofibrant replacement $h:P\xrightarrow{\sim} A$ where $P$ is a cofibrant $E_\infty$-algebra and $h$ is a quasi-isomorphism. Now Fresse's result endows $\BBar(P)$ with an $E_{\infty}$-algebra structure.
    But $\BBar(h):\BBar(P)\to \BBar(A)$ is a map of filtered coalgebra, and by \cref{lem:quasi-iso invariance} it is also a quasi-isomorphism. This is the claimed $E_\infty$-structure.

    To see that this structure induces a filtered commutative algebra structure on $H^*(\BBar(P))\cong H^*(\BBar(A))$, let $m:\BBar(P)\otimes \BBar(P)\to \BBar(P)$ be the product that underlies the $E_\infty$-Hopf structure. Since $m$ is commutative up to homotopy, the induced product map on cohomology 
    $$H^*(\BBar(P))\otimes H^*(\BBar(P)) \to H^*(\BBar(P)\otimes \BBar(P)) \xrightarrow{H^*(m)} H^*(\BBar(P))$$
    is a commutative product, where the first arrow is the standard K\"unneth morphism. The fact that the product respects the weight filtration at the chain level implies that the same holds for cohomology.
    (The same construction cannot be used to get a coalgebra structure on $H^*_{\BBar}(A)$ since the K\"unneth morphism goes in the wrong direction and is not invertible in general.)
\end{proof}
\begin{corollary}
    Suppose $R$ is Noetherian. Then the contravariant functor $X\mapsto H^*_{\BBar}(X;R)$ sends pointed topological spaces to filtered commutative algebras. 
\end{corollary}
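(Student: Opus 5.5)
The plan is to combine the flatness of singular cochains with \cref{lem:product on bar}, and then to observe that the remaining claims are functorial. There are three steps.

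\emph{Step 1: flatness.} Fix a pointed space $X$ and write $C^*(X;R)$ for its singular cochains, augmented by restriction to the basepoint. Exactly as in the proof of the topological invariance corollary, each $C^q(X;R)=\prod_{\sigma\in X_q}R$ is a flat $R$-module by Chase's theorem, since $R$ is Noetherian. Hence $C^*(X;R)$ is degreewise flat. It carries the standard $E_\infty$-algebra structure given by the surjection or Barratt--Eccles operad action on singular cochains (cf.\ Fresse's work cited above), and the augmentation is compatible with this structure because restriction to the basepoint is a map of $E_\infty$-algebras.

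\emph{Step 2: the algebra structure.} \cref{lem:product on bar} then applies directly. It gives a commutative product on $H^*_{\BBar}(X;R)$ that respects the weight filtration, so $H^*_{\BBar}(X;R)$ is a filtered commutative algebra. The filtration $F_pH^*_{\BBar}(X;R)$ is taken to be the image of $H^*(F_p\BBar(X;R))$.

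\emph{Step 3: functoriality.} Let $f:X_0\to X_1$ be a pointed map. The pullback $f^*$ is a morphism of augmented $E_\infty$-algebras, so it induces a filtered coalgebra map on bar constructions. By the essential uniqueness and functoriality of Fresse's $E_\infty$-Hopf structure, this map is compatible with the products up to coherent homotopy. Concretely, I would take functorial cofibrant replacements $P_i\to C^*(X_i;R)$ and lift $f^*$ to a map $P_1\to P_0$, so that $\BBar$ of the lift is a strict $E_\infty$-Hopf map. Passing to cohomology then gives a filtered algebra homomorphism $H^*_{\BBar}(X_1;R)\to H^*_{\BBar}(X_0;R)$. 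Identities and composites are respected because lifts are unique up to homotopy, and homotopic maps agree on cohomology.

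The main obstacle is Step 3. One must show that the induced map on cohomology does not depend on the choice of lift, so that the construction really defines a functor. I would handle this using the homotopy uniqueness of lifts in the model category of $E_\infty$-algebras, together with the fact that $\BBar$ preserves homotopies, using the quasi-isomorphism invariance of \cref{lem:quasi-iso invariance}. The argument that filtrations are preserved is the same chain-level argument as in \cref{lem:product on bar}.
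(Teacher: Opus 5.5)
Your proposal is correct and matches the paper's implicit argument. Noetherianity gives degreewise flatness of singular cochains via Chase's theorem, exactly as in the topological invariance corollary, and \cref{lem:product on bar} then supplies the commutative product compatible with the weight filtration. Your Step~3 spells out the functoriality that the paper leaves to the naturality of Fresse's construction. The ``main obstacle'' you flag is moot, though: the induced map on cohomology is simply $H^*(\BBar(f^*))$, and any one strictly commuting lift (for instance from a functorial cofibrant replacement) already shows it is multiplicative.
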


We do not have an explicit description of the algebra structure on $H^0_{\BBar}(X;R)$, but the product on its associated graded is determined by the following uniqueness statement.

\begin{lemma}\label{lem:shuffle is unique}
    Let $M$ be a flat $R$-module, and let $T^c(M)$ be a the cofree coalgebra on $M$. Then the shuffle product is the unique weight-graded product that makes $T^c(M)$ into a Hopf algebra. More generally, if $C\into T^c(M)$ is an injective homomorphism of weight-graded coalgebras then there exists at most one product structure making $C$ into a weight-graded bialgebra.

    In particular, $R\langle S_1,\ldots,S_n\rangle$ with the shuffle product is the unique cofree weight-graded Hopf algebra cogenerated by the free $R$-module with basis $S_1,\ldots,S_n$.
\end{lemma}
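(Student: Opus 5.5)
The plan is to show that compatibility with the coproduct forces the product on weight-graded pieces by induction on total weight, using the cogeneration of $T^c(M)$ by its weight-one summand. First I would treat the case $C=T^c(M)$ itself, and then deduce the general statement by restricting along the injection $C\into T^c(M)$.

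Suppose $\mu$ is a weight-graded product on $T^c(M)$ making it a bialgebra. Then $\mu$ is a coalgebra homomorphism $T^c(M)\otimes T^c(M)\to T^c(M)$, where the source carries the tensor-product coalgebra structure. Since $T^c(M)$ is cofree among conilpotent coalgebras, $\mu$ is determined by the composite $pr\circ\mu: T^c(M)\otimes T^c(M)\to M$. For $\mu$ to be weight-graded, this composite can be nonzero only on weight-one inputs, i.e. on $M\otimes R\oplus R\otimes M$. Unitality forces $\mu(x\otimes 1)=x=\mu(1\otimes x)$, so the composite is $pr\otimes\epsilon+\epsilon\otimes pr$, which is completely determined. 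This yields uniqueness.

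To identify $\mu$ with the shuffle product, I would observe that the shuffle product is a weight-graded bialgebra structure on $T^c(M)$ (the standard check that shuffle is compatible with deconcatenation). By uniqueness it is then the only such structure. An antipode exists automatically, since the coalgebra is connected, graded and conilpotent, so the bialgebra is a Hopf algebra.

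Alternatively, and more concretely, one can induct on $p+q$. By compatibility, $\Delta\mu(a\otimes b)$ is determined by products of strictly lower total weight, except for the two terms $\mu(a\otimes b)\otimes 1$ and $1\otimes\mu(a\otimes b)$. Hence $\barDelta\mu(a\otimes b)$ is known. Since $\barDelta$ is injective on weights $\geq2$ in $T^c(M)$ (its component landing in $M\otimes M^{\otimes(p+q-1)}$ is the identity), this pins down $\mu(a\otimes b)$. Flatness of $M$ is used to make the tensor powers behave, so that $T^c(M)\otimes T^c(M)$ injects appropriately and weight components can be compared.

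For general $C\into T^c(M)$, let $\mu$ be a weight-graded bialgebra product on $C$. I would run the same induction on $C$ itself. On $C$, $\barDelta$ restricted to weight $\geq2$ is injective, because it is compatible with the injection into $T^c(M)$ where it is injective. The unit must be the coaugmentation, since $C_0\cong R$ is forced by weight-grading and the injectivity of $C_0\to R$. The main obstacle is here: $C\otimes C\to T^c(M)\otimes T^c(M)$ must be injective for the inductive comparison to take place inside $T^c(M)^{\otimes 2}$. I would try to handle this using flatness of the weight pieces, or else by arguing entirely inside $C$, using that the coproduct of $C$ lands in $C\otimes C$ and that injectivity of $\barDelta$ is tested after composing to $T^c(M)^{\otimes2}$. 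The final clause, about $R\langle S_1,\ldots,S_n\rangle$, then follows from the first part with $M=R^n$ free.
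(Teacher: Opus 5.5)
Your first argument for $T^c(M)$ is exactly the paper's proof: a bialgebra product is a coalgebra map $T^c(M)\otimes T^c(M)\to T^c(M)$, so it is determined by its composite with $pr$. By grading, that composite lives only on $R\cdot 1\otimes M\oplus M\otimes R\cdot 1$, where the unit axiom fixes it.

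You diverge for the sub-coalgebra $C$. The paper runs no induction there. It applies the same cofreeness argument to the composite $i\circ\mu\colon C\otimes C\to T^c(M)$, a coalgebra map out of the connected graded, hence conilpotent, coalgebra $C\otimes C$. Its projection to $M$ is again forced by grading and the unit, so $i\circ\mu=i\circ\mu'$, and injectivity of $i$ gives $\mu=\mu'$. Nothing is ever compared inside $T^c(M)^{\otimes 2}$, so the issue you flag never arises.

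It is not a real obstacle for your route either. Your first proposed fix, flatness of the weight pieces, would require flatness of $C$, which is not given. Your second fix is the right one:
\begin{itemize}
\item compare $\barDelta_C\mu(a\otimes b)$ and $\barDelta_C\mu'(a\otimes b)$ inside $C\otimes C$, where by induction they are the same lower-weight expression;
\item then use that $\barDelta_C$ is injective in weights $\geq 2$, which you already showed via $\barDelta_C x=0\Rightarrow \barDelta(ix)=(i\otimes i)\barDelta_C x=0\Rightarrow ix=0$.
\end{itemize}
That implication never needs $i\otimes i$ to be injective, so your induction closes. Neither route uses flatness of $M$; cofreeness and the injectivity of $\barDelta$ on $M^{\otimes k}$, $k\geq 2$, hold for any module.

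As for what each buys: the cofreeness route is shorter and treats $T^c(M)$ and $C$ uniformly. Your induction is more hands-on and isolates the actual input, namely injectivity of the reduced coproduct in weights $\geq 2$.

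One small correction: injectivity of $C_0\to R$ only makes $C_0$ an ideal of $R$. That $i(1_C)=1$, and hence $C_0=R\cdot 1$, comes from the bialgebra axiom $\epsilon_C(1_C)=1$ together with $\epsilon_C=\epsilon\circ i$.
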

\begin{proof}
    First, recall that the standard shuffle product indeed makes $T^c(M)$ into a graded Hopf algebra \cite[Proposition 1.3.2]{Loday-Vallette}. We claim that any other graded Hopf structure must coincide with this one.
    
    Let $m:T^c(M)\otimes T^c(M)\to T^c(M)$ be an arbitrary graded product that makes $T^c(M)$ into a graded Hopf algebra. This means that $m$ is a coalgebra homomorphism, where $T^c(M)\otimes T^c(M)$ is given the diagonal coalgebra structure $\Delta(a\otimes b) = \sum (a_{(1)}\otimes b_{(1)})\otimes (a_{(2)}\otimes b_{(2)})$ in Sweedler notation.
    
    But since $T^c(M)$ is cofree, a  coalgebra homomorphism into it is determined by the result of post-composition with the projection $pr:T^c(M)\to M$ onto the cogenerators.  Because the cogenerator projection is graded, the fact that there exists only one graded product follows from the observation that there exists a unique graded map
    \[
    T^c(M)\otimes T^c(M)\to M
    \]
    that satisfies the unit axiom. 
    Indeed, in degree $1$, the map
    \[
    \left(T^c(M)\otimes T^c(M)\right)_1 = (R\cdot 1\otimes M) \oplus  (M\otimes R\cdot 1) \longrightarrow M
    \]
    is uniquely determined by the requirement that $1\otimes x \mapsto x$ and $x \otimes 1 \mapsto x$, and in all other degrees the target is trivial. Since the shuffle product $\ast$ satisfies these equations, it must coincide with $m$.

    For graded sub-coalgebras $i:C\into T^c(M)$ the same argument works with little change. A graded bialgebra structure on $C$ is a homomorphism of graded coalgebras $m:C\otimes C\to C$. Composing with the inclusion, $i\circ m:C\otimes C\to T^c(M)$ is uniquely determined by its (graded) projection to $M$, which is determined by the unit axiom. If $m':C\otimes C\to C$ is another graded product compatible with the coproduct, then $i\circ m=i\circ m'$, so the fact that $i$ is assumed to be injective implies that $m=m'$.
\end{proof}

\subsection{The leading-term Milnor--Hopf algebraic invariant}\label{S:primary}
Our strategy for studying links is to extract invariants from the quasi-isomorphism types of the dg-Hopf algebras $\BBar(-)$ of their exteriors. The first and most important one in this work is the associated graded of $H^0(\BBar(-))$, which we show is determined by a labeled oriented link without any ambiguity.

This leading-term invariant will be a functorial graded submodule $\Primary(-;R)\into T^c(H^1(-;R))$ inside the cofree Hopf algebra on the $R$-module $H^1(-;R)$, obtained from the associated graded of $H^0_{\BBar}(-;R)$. A technical complication is that, depending on the ring $R$, the submodule may or may not be a sub-Hopf algebra. We therefore divide the discussion into two parts, starting with the more structured variant when $R$ is a PID, and later considering more general rings.

\begin{proposition}\label{prop:H0Bar is Hopf}
    Suppose $R$ is a PID. Then the functor $X\mapsto H^0_{\BBar}(X;R)$ from the category of pointed path-connected topological spaces takes values in commutative Hopf algebras that are cogenerated by a projection onto their primitive elements, that is, the elements of weight 1: in this case\footnote{Note that elements of $\BBar(A)$ of weight 1 (i.e.\ tensor products with only one factor) and total degree $0$ must be elements in $A^1$.} this is a projection $pr:H^0_{\BBar}(X;R)\twoheadrightarrow H^1(X;R)$.

    The associated graded of $H^0_{\BBar}(X;R)$, denoted $\Primary(X;R)$, is similarly functorial in pointed continuous maps and furthermore admits a natural 
    embedding of graded Hopf algebras $\Primary(X;R)\into T^c(H^1(X;R))$ whose target is the cofree tensor coalgebra equipped with the deconcatenation coproduct and the shuffle product.
    
    Maps $f:X\to Y$ induce pullbacks $f^*:\Primary(Y;R)\to \Primary(X;R)$ compatible with their respective inclusion into $T^c(H^1(Y;R))\to T^c(H^1(X;R))$.
\end{proposition}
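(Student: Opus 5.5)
The plan is to replace $C^*(X;R)$ by a \emph{reduced} cochain model, so that $H^0_{\BBar}$ becomes an honest submodule of a tensor coalgebra with no quotient to take. Since $X$ is path-connected, singular cochains are quasi-isomorphic, as augmented dg algebras and functorially in pointed maps, to normalized cochains on the Eilenberg subcomplex of singular simplices whose vertices all lie at the basepoint. Call this model $A$. It satisfies $\bar A^q=0$ for $q\le 0$, and each $A^q$ is a product of copies of $R$, hence flat by Chase's theorem. By \cref{lem:quasi-iso invariance}, $\BBar(A)$ computes $H^*_{\BBar}(X;R)$ as a coalgebra, naturally in $X$.

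In this model two simplifications occur. First, $\BBar(A)$ vanishes in negative total degrees, and $\BBar^0(A)=\bigoplus_p (A^1)^{\otimes p}$. Hence $H^0_{\BBar}=Z:=\ker\bigl(d:\BBar^0\to\BBar^1\bigr)$. Second, $A^0$ has trivial reduced part, so there are no degree-$1$ coboundaries, and $Z^1(A)=H^1(X;R)$.

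Next I show that $Z$ is a subcoalgebra of $T^c(A^1)$. Since $\Delta$ commutes with $d$, for $x\in Z$ the element $\Delta x$ lies in the kernel of $d\otimes 1\pm 1\otimes d$ on $\BBar^0\otimes\BBar^0$, which is the full degree-$0$ part of $\BBar\otimes\BBar$. That kernel is $(Z\otimes\BBar^0)\cap(\BBar^0\otimes Z)$. The quotient $\BBar^0/Z$ embeds in $\BBar^1$, a flat module, so over a PID it is torsion-free and hence flat. The same holds for $\BBar^0$. A standard flatness argument then gives $(Z\otimes\BBar^0)\cap(\BBar^0\otimes Z)=Z\otimes Z$. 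Consequently $H^0_{\BBar}$ is a conilpotent coalgebra without any Künneth issue, it is connected, and it has a weight filtration induced from tensor length. The product comes from \cref{lem:product on bar}. This is where I must check, and I expect it to be the main technical obstacle, that the product transported through the cofibrant replacement is compatible with this coproduct on $Z$, so that we obtain a genuine bialgebra. The antipode then exists by connectedness and conilpotence.

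For the associated graded, take a cycle $x=a_p+\dots+a_0$ with $a_p$ of weight $p$. The weight-$p$ component of $d_{\BBar}x$ is $d_v a_p$, so $d_v a_p=0$. Thus $a_p\in Z^1(A)^{\otimes p}$, using flatness to identify the kernel of $d_v$ on $(A^1)^{\otimes p}$ with $Z^1(A)^{\otimes p}$, and $Z^1(A)^{\otimes p}=H^1(X;R)^{\otimes p}$. The leading-term map $x\mapsto pr^{\otimes p}\barDelta^{p-1}(x)$ therefore defines an injection $\mathrm{gr}_p H^0_{\BBar}\hookrightarrow H^1(X;R)^{\otimes p}$. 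This injection is compatible with deconcatenation because $\Delta$ is deconcatenation on $\BBar^0$. It follows that $\Primary(X;R)\hookrightarrow T^c(H^1(X;R))$ is a graded coalgebra embedding. Over a PID the graded pieces are submodules of flat modules, hence flat, which lets me identify $\mathrm{gr}(H\otimes H)$ with $\mathrm{gr}H\otimes\mathrm{gr}H$. Then the filtered product induces a weight-graded bialgebra structure on $\Primary(X;R)$. By \cref{lem:shuffle is unique} this product is necessarily the restriction of the shuffle product, so the embedding is one of graded Hopf algebras.

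Finally, I identify the primitives. A primitive element of the conilpotent coalgebra $Z\subseteq T^c(A^1)$ has vanishing reduced coproduct, so it has weight at most $1$. Hence $\mathrm{Prim}(H^0_{\BBar})=Z^1(A)=H^1(X;R)$. The projection $pr$ onto weight one is a map to $H^1(X;R)$, and the coalgebra embedding into the cofree $T^c(A^1)$ shows that $H^0_{\BBar}$ is cogenerated by it. Naturality of every step in pointed maps $f$ follows from the functoriality of the Eilenberg subcomplex and of $\BBar$. This yields the compatible pullbacks $f^*:\Primary(Y;R)\to\Primary(X;R)$ sitting over $T^c(f^*)\colon T^c(H^1(Y;R))\to T^c(H^1(X;R))$.
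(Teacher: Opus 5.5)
Most of your route is sound, and it differs from the paper's. Passing to normalized cochains on the Eilenberg subcomplex makes $H^0_{\BBar}$ literally the cocycle module $Z\subseteq T^c(A^1)$. Your flatness argument giving $(Z\otimes\BBar^0)\cap(\BBar^0\otimes Z)=Z\otimes Z$ then replaces the paper's weight spectral sequence and K\"unneth-isomorphism argument. Your identification $\mathrm{Prim}=Z^1(A)=H^1(X;R)$ is correct, and so is your direct use of the sub-coalgebra case of \cref{lem:shuffle is unique} on $\Primary(X;R)$.

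The product--coproduct compatibility you flag as the main obstacle is formal, which is how the paper treats it. The product $m$ on $\BBar(P)$ in \cref{lem:product on bar} is a map of dg coalgebras, and $\BBar(P)\to\BBar(A)$ is a dg-coalgebra quasi-isomorphism. So the identity $\Delta m=(m\otimes m)(1\otimes\tau\otimes 1)(\Delta\otimes\Delta)$ passes to $H^0$ by naturality of the K\"unneth maps. The only inversion needed is a degree-zero K\"unneth map, which is an isomorphism; in your model this is exactly $Z\otimes Z=H^0(\BBar(A)\otimes\BBar(A))$.

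The genuine error is in your last step: the weight-one projection $Z\to A^1$ does not land in $H^1(X;R)=Z^1(A)$. For a cocycle $x=a_p+\dots+a_1+a_0$, the weight-one component of $d_{\BBar}x=0$ reads $d_va_1+d_ha_2=0$. So $da_1$ is minus the sum of cup products of the two factors of $a_2$, which is generally a nonzero cochain. For example, any cocycle with leading term $[\alpha|\beta]+[\beta|\alpha]$ has $da_1=-(\alpha\beta+\beta\alpha)$; such cocycles exist, and the cup product is not strictly commutative on cochains. Hence the embedding $Z\into T^c(A^1)$ only shows cogeneration by the much larger module $A^1$, not by a projection onto the primitives, as the statement requires.

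The repair is the associated-graded argument the paper uses. Let $r\colon Z\to Z^1(A)$ be any $R$-linear retraction, equal to the identity on $Z^1(A)$ and zero on $1$. The induced coalgebra map $h_r\colon Z\to T^c(H^1(X;R))$ is filtered. For $x\in F_pZ$, its weight-$p$ component is $r^{\otimes p}\barDelta^{p-1}x=a_p$, because $a_p\in Z^1(A)^{\otimes p}$. So $\mathrm{gr}\,h_r$ is your injective leading-term map, and therefore $h_r$ is injective.

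You still have to produce $r$. Composing your weight-one projection with a retraction $A^1\to Z^1(A)$ would require $A^1/Z^1(A)\cong B^2(A)$ to be projective, which is not automatic for these large product modules. You must instead split the filtration of $Z$ itself. For instance, suppose $H^1(X;R)$ is free of finite rank, as for link complements. Then each $\mathrm{gr}_pZ$ is a submodule of the finite-rank free module $H^1(X;R)^{\otimes p}$ over a PID, hence free. So $Z/F_1Z$ is free, $F_1Z=R\cdot 1\oplus Z^1(A)$ is a direct summand of $Z$, and the retraction $r$ exists.
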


\begin{definition}[Leading-term Milnor--Hopf algebra with PID coefficients]
    \label{D:leading term PID}
    For any PID $R$ and a pointed path-connected space $X$, let $\Primary(X;R)$ denote the associated graded Hopf algebra of $H^0_{\BBar}(X;R)$ with respect to the weight filtration. 
    
    The {\bf leading-term Milnor--Hopf algebraic invariant}, or {\bf leading-term invariant} for short, is the inclusion of associated graded Hopf algebras
    \[
    \Primary(X;R)\into T^c(H^1(X;R)),
    \]
    where the target is the (unique) cofree graded Hopf algebra cogenerated by $H^1(X;R)$ equipped with the shuffle product.
    We will often abuse notation and write $\Primary(-;R)$ for the image of the associated graded in $T^c(H^1(-;R))$.
\end{definition}

Two corollaries follow immediately from \cref{prop:H0Bar is Hopf}.

\begin{corollary}\label{cor:equalH^1 means equal C}
    In the setting of Proposition \ref{prop:H0Bar is Hopf}, if $X\rightrightarrows Y$ are two pointed maps that induce equal pullbacks on $H^1(-;R)$, then they induce equal pullbacks $\Primary(Y;R)\rightrightarrows \Primary(X;R)$. 
    \qed
\end{corollary}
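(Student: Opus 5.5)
This follows from the naturality clause of \cref{prop:H0Bar is Hopf}, together with the injectivity of the embeddings $\Primary(-;R)\into T^c(H^1(-;R))$. Let $f,g\colon X\to Y$ be the two pointed maps, and write $f^*=g^*\colon H^1(Y;R)\to H^1(X;R)$ for their common pullback.

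First, I would apply the functor $T^c$ to this map. This produces a single coalgebra homomorphism $T^c(f^*)=T^c(g^*)\colon T^c(H^1(Y;R))\to T^c(H^1(X;R))$, given on monomials by $[y_1|\cdots|y_p]\mapsto[f^*y_1|\cdots|f^*y_p]$. It therefore depends only on the induced map on $H^1$. Because $T^c(H^1(X;R))$ is cofree, this coalgebra map is also pinned down abstractly: it is the unique one whose composite with the cogenerator projection is $f^*\circ pr$. So either description shows that $f$ and $g$ induce the same map here.

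Next, I would invoke the last clause of \cref{prop:H0Bar is Hopf}. It says that each pullback on $\Primary$ is compatible with the inclusions, which gives two commuting squares:
\[
i_X\circ f^*_{\Primary} = T^c(f^*)\circ i_Y, \qquad i_X\circ g^*_{\Primary} = T^c(g^*)\circ i_Y .
\]
The right-hand sides agree by the previous step, so $i_X\circ f^*_{\Primary}=i_X\circ g^*_{\Primary}$. Since $i_X$ is injective, it follows that $f^*_{\Primary}=g^*_{\Primary}$, which is the claim.

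The only point that needs care is making sure the compatibility square in \cref{prop:H0Bar is Hopf} really uses the map $T^c(f^*)$ induced on tensor powers, and not some other coalgebra map. I would settle this by checking it directly. Weight-$p$ leading terms of classes in $H^0_{\BBar}(Y;R)$ are sent to $\bar C^1(Y)^{\otimes p}$-cocycles modulo lower weight, and applying $f^*$ tensor-factorwise commutes with taking leading terms and then cohomology classes. Hence the induced map on leading terms is exactly $T^c(f^*)$ restricted to $\Primary(Y;R)$. No other obstacle is expected.
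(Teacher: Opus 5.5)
Your argument is correct and is the same reasoning the paper leaves implicit: the corollary is marked \qed as an immediate consequence of the naturality clause of Proposition~\ref{prop:H0Bar is Hopf} together with injectivity of the embedding into $T^c(H^1(-;R))$. One small imprecision in your final check is that weight-$p$ leading terms of total degree $0$ are degree-$p$ cocycles in $\bar C^*(Y)^{\otimes p}$, not necessarily in $\bar C^1(Y)^{\otimes p}$, since $\bar C^0\neq 0$ for singular cochains; the identification of the induced map with $(f^*)^{\otimes p}$ therefore comes from naturality of the K\"unneth isomorphism $H^1(\bar A)^{\otimes p}\cong H^p(\bar A^{\otimes p})$ used in Lemma~\ref{lem:spectral sequence}.
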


\begin{corollary}[Changing basepoints]\label{cor:change basepoints}
    Let $X$ be a path-connected topological space with two points $x_0,x_1\in X$. Suppose there exists a map of pointed spaces $f:(X,x_0)\to (X,x_1)$ that induces the identity map on $H^1(X;R)$, e.g. when $f$ is freely homotopic to the identity. Then $f$ induces the identity map on the submodules $\Primary(X;R)\into T^c(H^1(X;R))$.
    \qed
\end{corollary}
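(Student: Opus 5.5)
Write $f\simeq\id$ for the free homotopy in the example; in general the only hypothesis used is that $f$ induces the identity on $H^1(X;R)$. The plan is to reduce everything to the naturality statement in \cref{prop:H0Bar is Hopf}. That proposition says that $f$ induces a pullback $f^*\colon \Primary(X,x_1;R)\to \Primary(X,x_0;R)$ of associated graded Hopf algebras. It also says this pullback is compatible with the map $T^c(f^*)\colon T^c(H^1(X;R))\to T^c(H^1(X;R))$ between the two embeddings. In other words, the square built from the embeddings $i_{x_1}$ and $i_{x_0}$ into $T^c(H^1(X;R))$, the map $f^*$ on the $\Primary$'s, and $T^c(f^*|_{H^1})$ on the tensor coalgebras, commutes.

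First, observe that $H^1(X;R)$ does not depend on the basepoint. For path-connected $X$, reduced and unreduced $H^1$ agree, so both embeddings land in the same target $T^c(H^1(X;R))$. Since $f^*=\id$ on $H^1(X;R)$ by hypothesis, the functorially induced map $T^c(f^*)$ is the identity on every tensor power.

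Then the commuting square reads $i_{x_0}\circ f^* = i_{x_1}$. Two consequences follow.
\begin{enumerate}
\item The image $i_{x_1}(\Primary(X,x_1;R))$ is contained in $i_{x_0}(\Primary(X,x_0;R))$.
\item Under the identifications of both $\Primary$'s with their images, $f^*$ is the identity on elements.
\end{enumerate}

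For the reverse containment, I would apply the same argument to a map $g\colon (X,x_1)\to(X,x_0)$ that also induces the identity on $H^1$. When $f$ is freely homotopic to the identity, I would take $g=\id_X$ precomposed with a self-map homotopic to the identity that moves $x_1$ to $x_0$. Such a map is obtained by pushing along a path via the homotopy extension property, which applies since basepoints of reasonable spaces are cofibrations; this works directly for manifold complements. In the general hypothesis of the corollary, I would instead read the statement as asserting only that the map $f^*$, viewed between the submodules, is the identity; that already follows from the square. Equality of the two submodules then follows whenever maps in both directions exist, which is the case in the intended application.

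The main obstacle is making sure the naturality in \cref{prop:H0Bar is Hopf} really is compatibility with $T^c(f^*|_{H^1})$, and not merely with some map depending on the chain-level representative. This is where cofreeness enters. The embedding $\Primary\hookrightarrow T^c(H^1)$ is the unique coalgebra map lifting the projection to cogenerators $pr$. Since $f^*$ commutes with $pr$, uniqueness of coalgebra maps into a cofree coalgebra forces the square to commute, and this is essentially \cref{cor:equalH^1 means equal C} applied with the second map being the identity.
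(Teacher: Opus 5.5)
Your proof is correct and is essentially the paper's argument. The paper marks this corollary as immediate from \cref{prop:H0Bar is Hopf}: its naturality square, together with $T^c(\id)=\id$ on $T^c(H^1(X;R))$, gives $i_{x_0}\circ f^* = i_{x_1}$. Your extra treatment of the reverse containment, via a homotopy-extension map $g$ for well-pointed $X$, is sound but goes beyond what the paper records.
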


\begin{remark}
    Together, Corollaries \ref{cor:equalH^1 means equal C} and \ref{cor:change basepoints} show that for connected manifolds, induced maps $\Primary(Y;R)\to \Primary(X;R)$ do not depend on the chosen basepoints. That is, if $f:X\to Y$ is an unpointed map of connected manifolds, then for any two basepoints $x_0,x_1\in X$, the pointed maps $f_i:(X,x_i)\to (Y,f(x_i))$ induces equal maps on $\Primary(-;R)$.
\end{remark}

\begin{remark}
    The projection $pr:H^0_{\BBar}(X;R)\to H^1(X;R)$ onto its cogenerators mentioned in \cref{prop:H0Bar is Hopf} induces an inclusion $H^0_{\BBar}(X;R)\into T^c(H^1(X;R))$ similarly to $\Primary(X;R)$, since the target is cofree. However, unlike the associated graded, homomorphisms of Hopf algebras $f:H^0_{\BBar}(Y;R)\to H^0_{\BBar}(X;R)$ are not determined by their restriction $f_1:H^1(Y;R)\to H^1(X;R)$, and are typically non-homogeneous (e.g., they can send $[\alpha|\beta]\in H^1(Y;R)^{\otimes 2}$ to $[f_1(\alpha)|f_1(\beta)]+[\gamma]$ for some $\gamma\in H^1(X;R)$ which cannot be deduced from $f_1$).
    Without factoring through $H^1(-;R)$ one cannot, for example, demonstrate independence of the choice of basepoint. Indeed, pointed maps that are freely homotopic often induce distinct homomorphisms. This ambiguity vanishes upon passage to the associated graded objects.
   
\end{remark}

To prove \cref{prop:H0Bar is Hopf}, we use the weight filtration, its associated spectral sequence, and its compatibility with the coproduct:

\begin{lemma}\label{lem:spectral sequence}
    Let $R$ be an arbitrary commutative ring and suppose $A$ is a dg-algebra that is a flat $R$-module such that $H^0(A)=R$ and $H^i(A)=0$ for $i<0$. Suppose furthermore either
    \begin{itemize}
        \item $H^*(A)$ is flat over $R$, or
        \item $R$ is a PID and $H^1(A)$ is torsion-free\footnote{Recall that if $R$ is a PID, then an $R$-module is flat if and only if it is torsion-free \cite[Proposition 4.20]{LAM}.}.
    \end{itemize}
    Then the map $\Delta:\BBar(A)\to \BBar(A)\otimes \BBar(A)$ strictly respects the weight filtration, assigning to an element of $\BBar(A)\otimes \BBar(A)$ the weight given by the sum of the weights in each tensor factor, and it induces a map between the respective weight spectral sequences.
    Both spectral sequences have their $E_1$ page concentrated in nonnegative total degrees and, furthermore, in total degree zero there is a commutative diagram
    \[
    \xymatrix{
    E_1(\BBar(A))^0 \ar[r]^-{E_1(\Delta)} \ar@{=}[d] & E_1(\BBar(A)\otimes \BBar(A))^0 \ar@{=}[d] \\
    T^c(H^1(A)) \ar[r]^-\Delta & T^c(H^1(A))\otimes T^c(H^1(A)),
    }
    \]
    where $T^c(-)$ is the cofree tensor coalgebra.
\end{lemma}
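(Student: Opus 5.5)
We first show that the deconcatenation coproduct is strictly filtered. This is immediate from formula \eqref{eq:tensor coproduct}: $\Delta$ sends $\bar A^{\otimes p}$ into $\bigoplus_{i+j=p}\bar A^{\otimes i}\otimes \bar A^{\otimes j}$. Hence it preserves the total-weight filtration $F_p(\BBar(A)\otimes\BBar(A))=\sum_{i+j\le p}F_i\otimes F_j$. It also commutes with $d_{\BBar}$, since $d_v$ and $d_h$ act within blocks of adjacent factors and deconcatenation splits between factors. A filtered chain map induces a map of weight spectral sequences.

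On the $E_0$ page, $\BBar(A)$ has columns $(\bar A^{\otimes p}, d_v)$, and $\BBar(A)\otimes\BBar(A)$ has columns $\bigoplus_{i+j=p}(\bar A^{\otimes i}\otimes\bar A^{\otimes j},d_v)$. Both are tensor powers of the complex $\bar A$, so $E_1$ is the cohomology of $\bar A^{\otimes N}$ for various $N$. Since $H^0(A)=R$ is the unit summand and $H^{<0}(A)=0$, we get $H^q(\bar A)=0$ for $q\le 0$ and $H^1(\bar A)=H^1(A)$.

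The key step is a Künneth computation. We need $H^*(\bar A^{\otimes N})$ to vanish in degrees $<N$ and to equal $H^1(A)^{\otimes N}$ in degree $N$.

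\emph{First case: $H^*(A)$ flat.} Since $\bar A$ is flat, the Künneth spectral sequence (or the flat Künneth theorem) gives $H^*(\bar A^{\otimes N})\cong H^*(\bar A)^{\otimes N}$, which is concentrated in degrees $\ge N$ with degree-$N$ part $H^1(A)^{\otimes N}$.

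\emph{Second case: $R$ a PID.} Use the Künneth short exact sequence inductively:
\[
0\to \bigoplus_{a+b=n}H^a(\bar A^{\otimes(N-1)})\otimes H^b(\bar A)\to H^n(\bar A^{\otimes N})\to \bigoplus_{a+b=n+1}\operatorname{Tor}(H^a,H^b)\to 0 .
\]
By induction $H^a(\bar A^{\otimes(N-1)})$ vanishes for $a<N-1$, and $H^b(\bar A)$ vanishes for $b<1$. So for $n<N$ both sides vanish. For $n=N$, the Tor term has $a+b=N+1$ with $a\ge N-1$ and $b\ge 1$. The case $(a,b)=(N-1,2)$ involves $\operatorname{Tor}(H^1(A)^{\otimes (N-1)},H^2)$, which vanishes because $H^1(A)^{\otimes(N-1)}$ is torsion-free, hence flat. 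The case $(a,b)=(N,1)$ vanishes because $H^1$ is flat. Thus $H^N(\bar A^{\otimes N})=H^1(A)^{\otimes N}$. This uses that tensor products of torsion-free modules over a PID are torsion-free.

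The main obstacle is keeping this degree bookkeeping right. With it in hand, $E_1^{-p,q}=H^q(\bar A^{\otimes p})$ vanishes unless $q\ge p$, that is, unless the total degree $q-p\ge 0$. Applying the same bound to each $\bar A^{\otimes i}\otimes\bar A^{\otimes j}$ settles the tensor square. In total degree zero, $E_1^0=\bigoplus_p H^1(A)^{\otimes p}=T^c(H^1(A))$, and similarly $E_1(\BBar\otimes\BBar)^0=\bigoplus_{i,j}H^1(A)^{\otimes i}\otimes H^1(A)^{\otimes j}=T^c(H^1(A))\otimes T^c(H^1(A))$.

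Finally, $E_1(\Delta)$ is the map induced on cohomology by deconcatenation of tensor factors. Under the Künneth identifications above, which are natural in regrouping factors, it sends $[x_1|\cdots|x_p]$ to $\sum_i[x_1|\cdots|x_i]\otimes[x_{i+1}|\cdots|x_p]$. This is the deconcatenation coproduct of $T^c(H^1(A))$, which gives the commutative square. The only sign check is that degree-one classes produce no Koszul signs in the regrouping, which holds because the bar signs vanish in degree one.
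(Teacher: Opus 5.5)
Your proposal is correct and follows essentially the same route as the paper: $\Delta$ is a filtered chain map, $E_1^{-p,q}=H^q(\bar A^{\otimes p})$, an inductive K\"unneth computation handles the two cases, and $E_1(\Delta)$ is identified with deconcatenation via naturality of the K\"unneth map. One bookkeeping fix is needed in the PID case with $n=N-1$: the term $\operatorname{Tor}(H^{N-1}(\bar A^{\otimes(N-1)}),H^1(\bar A))$ is not excluded by degrees as you claim, and it vanishes only because $H^1(A)$ is torsion-free, which is exactly where that hypothesis is needed for the $E_1$ page to vanish in negative total degree.
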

\begin{proof}
    The strict preservation of the weight filtration by $\Delta$ is clear by direct inspection. Since $\Delta$ furthermore satisfies the Leibniz rule, it is a chain map and thus induces a map of the respective spectral sequences.

    The $E_1$ page of the weight spectral sequence for $\BBar(A)$ takes the form
    \[
    E_1^{-p,q} = H^q(\bar A^{\otimes p}).
    \]
    Let us therefore focus on the cohomology of tensor powers. Since $A$ is a complex of flat $R$-modules, its summand $\bar A$ and all its tensor powers are flat. One can therefore compute $H^*(\bar A\otimes \bar A^{\otimes p-1})$ using the K\"unneth spectral sequence \cite[Theorem 2.20]{McC}, for which
    \begin{equation}\label{eq:Kunneth for E1 page}
    E_2^{s,t} = \bigoplus_{i+j=t}\operatorname{Tor}^R_{s}( H^i(\bar A),H^j(\bar A^{\otimes p-1})) \implies H^{t-s}(\bar A^{\otimes p}).
    \end{equation}
    
    If $H^*(A)$ is flat over $R$, then so is its summand $H^*(\bar A)$ and we conclude that the K\"unneth summands with $s>0$ vanish, inductively obtaining isomorphisms
    \[
    H^*(\bar A^{\otimes p}) \cong H^*(\bar A)\otimes H^*(\bar A^{\otimes p-1}) \cong \cdots \cong H^*(\bar A)^{\otimes p}.
    \]
    So the $E_1$ page of the weight spectral sequence of $\BBar(A)$ has columns $H^*(\bar A)^{\otimes p}$, and we see that its $d_1$-differential is given by the cohomological product of $A$, meaning that $E_1 = \BBar(H^*(\bar A))$. But since $H^0(A) = R$ and $H^i(A)=0$ for $i<0$, the augmentation ideal has $H^i(\bar A)=0$ for $i\leq 0$, and the entire $E_1$-page vanishes in negative total degree. In total degree zero the only contribution to $\BBar(H^*(A))$ is from tensors of $H^1(A)$, so we have an isomorphism $E_1(\BBar(A))^0\cong T^c(H^1(A))$ as claimed.

    The second case we consider is when $R$ is a PID and $H^1(A)$ is torsion-free. Returning to the K\"unneth spectral sequence for $H^*(\bar A^{\otimes p})$, we see that the terms in formula \eqref{eq:Kunneth for E1 page} with $s>1$ vanish, since $R$ is a PID.  By assumption, $H^{1}(\bar A)$ is flat and $H^i(\bar A)=0$ for $i \leq 0$, so $\operatorname{Tor}^R_s(-,-)$ terms with $s=1$ vanish on such arguments as well. Proceeding by induction on $p$, we deduce that $H^{i}(\bar A^{\otimes p}) = 0$ for $i<p$ and $H^p(\bar A^{\otimes p}) \cong H^1(A)^{\otimes p}$ as before. Therefore, we get the same description of the $E_1$ page of the weight spectral sequence in total degree zero.

    The same argument applies to $\BBar(A)\otimes \BBar(A)$ to give a description of the $E_1$-page as vanishing in negative total degrees and as tensors of $H^1(A)$ in total degree zero.

    Lastly, for the coproduct structure on the $E_1$-page, consider the deconcatenation component $\Delta_i:\bar A^{\otimes p}\to \bar A^{\otimes i}\otimes \bar A^{\otimes p-i}$. Tracing through the action on cocycles, there is a commutative diagram
        \[
        \xymatrix{
        H^*(\bar A)^{\otimes p} \ar[r]^-{\Delta_i} \ar@{->>}[d] & H^*(\bar A)^{\otimes i} \otimes H^*(\bar A)^{\otimes p-i} \ar@{->>}[d] \\
        H^*(\bar A^{\otimes p}) \ar[r]^-{H^*(\Delta_i)} & H^*(\bar A^{\otimes i}\otimes \bar A^{\otimes p-i})
        }
        \]
    whose vertical arrows are isomorphisms in degree $p$ and so uniquely characterizes the induced map in total degree zero on the $E_1$-page as deconcatenation.
\end{proof}

With the technical input of \cref{lem:spectral sequence}, we can now  prove the first main result of this subsection.

\begin{proof}[Proof of Proposition \ref{prop:H0Bar is Hopf}]

    The plan of the proof is to first obtain an inclusion of $R$-modules $\Primary(X;R) \hookrightarrow T^c(H^1(X;R))$ and show that it is a map of coalgebras; then check that $H^0_{\BBar}(X;R)$ is a Hopf algebra with the claimed cogenerators; and finally show that the inclusion is a map of algebras.
    
    Let $A:=C^*(X;R)$ denote the differential graded algebra of cochains on a pointed path-connected space $X$ with coefficients in a PID. Note that $H^0(\BBar(A))$ is $R$-torsion-free. Indeed, since cochains are $R$-torsion-free and $H^1(X;R) \cong \Hom(H_1(X),R)$ is similarly torsion-free, Lemma \ref{lem:spectral sequence} shows that the bar spectral sequence computing $H^0(\BBar(A))$ has $E_1$ vanishing in negative total degree and torsion-free in total degree zero. Since no nontrivial differentials may hit elements of total degree zero, there is an inclusion
    \[
    E_\infty^{-p,p} \into E_1^{-p,p} \cong H^1(A)^{\otimes p}
    \]
    whose image is characterized by the vanishing of all higher differentials. Explicitly, the $E_\infty$-page is the associated graded of $H^0_{\BBar}(A)$, denoted here by $\Primary(X;R)$, and the inclusion in total degree zero $E_{\infty}\into E_1$ gives a natural embedding $\Primary(X;R)\into T^c(H^1(X;R))$.  To describe it more completely, we note that in weight one, $E_1^{-1,1}\cong H^1(A)$, and this group supports no nontrivial differentials, so it survives to $E_\infty$;  thus the composition $\Primary(X;R)\into T^c(H^1(A))\xrightarrow{pr} H^1(X;R)$ restricts to an isomorphism on weight-one elements. 
    Additionally, $\Primary(-;R)$ is functorial in continuous maps as a composition of several functors, and the embedding $\Primary(-;R) \into T^c(H^1(-;R))$ is natural because it comes from a map of spectral sequences.

    To address the coalgebra structure on $\hb(X;R)$, we consider the K\"unneth morphism
    \begin{equation}
    \label{eq:Kunneth morphism}
    H^0(\BBar(A))\otimes H^0(\BBar(A)) \to H^0(\BBar(A)\otimes \BBar(A)).
    \end{equation}
    We claim that it is an isomorphism and thus gives a unique coproduct on $H^0_{\BBar}(A)$ compatible with the induced map $\Delta:\BBar(A)\to \BBar(A)\otimes \BBar(A)$.

    To verify this claim, first note that as $\Primary(X;R)$ is a submodule of the $R$-torsion-free module $T^c(H^1(A))$, it is itself torsion-free. Thus the same holds for $H^0(\BBar(A))$. Now since $\BBar(A)$ is $R$-torsion-free, the standard K\"unneth formula for PIDs applies to compute $H^0(\BBar(A)\otimes \BBar(A))$ as an extension of 
    $\bigoplus_{i+j=1}\operatorname{Tor}^R_1 (H^i(\BBar(A)),H^j(\BBar(A)))$
    by 
    $\bigoplus_{i+j=0}H^i(\BBar(A))\otimes H^j(\BBar(A))$.
    There is no cohomology in strictly negative degrees so the only $\operatorname{Tor}$ terms that may contribute to the zeroth cohomology are ones in which at least one of the arguments is $H^0(\BBar(A))$. But the latter is a torsion-free $R$-module, so all $\operatorname{Tor_1}$-terms vanish and the K\"unneth morphism \eqref{eq:Kunneth morphism} is an isomorphism. 
    
    Inverting the K\"unneth morphism gives the natural 
    coalgebra structure on $H^0_{\BBar}(A)$ by the composition 
    \[
    H^0(\BBar(A)) \xr{H^*(\Delta)} H^0(\BBar(A)\otimes \BBar(A)) \xrightarrow{\cong} H^0(\BBar(A))\otimes H^0(\BBar(A)).
    \]
    The coalgebra axioms can be checked using those for $\BBar(A)$ and the naturality of the K\"unneth map.
    A  posteriori, it follows that the inclusion $\Primary(A)\into T^c(H^1(A))$ is a homomorphism of coalgebras, as both structures are induced from the coproduct on $\BBar(A)$.

    The coproduct on $H^0(\BBar(A))$ is compatible with the product from \cref{lem:product on bar} by the compatibility on the quasi-isomorphic model given in that lemma and the naturality of the coproduct. 
    Hence $H^0_{\BBar}(A)$ is a commutative bialgebra. Now $H^0_{\BBar}(A)$ is augmented and 
    connected relative to the weight filtration, so it admits a unique antipode making it into a Hopf algebra.

    We claim that $H^0_{\BBar}(A)$ is cogenerated by its weight-one component $pr:H^0_{\BBar}(A)\to H^1(A)$. Since $T^c(H^1(A))$ is cofreely cogenerated by $H^1(A)$, there exists a unique extension of $pr$ to a coalgebra homomorphism $h:H^0_{\BBar}(A)\to T^c(H^1(A))$. Showing that $H^1(A)$ cogenerates $H^0_{\BBar}(A)$ is equivalent to showing that $h$ is injective, and since the weight filtration is natural in coalgebra homomorphisms, it suffices to check injectivity at the level of associated graded. But we have already  established above that the coalgebra homomorphism $\Primary(A)\into T^c(H^1(A))$ is injective.

    Lastly, since the product structure on $\BBar(A)$ propagates through the bar spectral sequence, the inclusion $\Primary(A)=E_\infty\into E_1=T^c(H^1(A))$ is a homomorphism of graded algebras. We conclude the proof by noting that the algebra structure on $T^c(H^1(A))$ must be the shuffle product, as shown in Lemma \ref{lem:shuffle is unique}.
\end{proof}

To later relate $\Primary(-;R)$ of a link complement with Milnor's $\bar \mu$-invariants, we must also consider the bar construction with coefficients in the rings $R=\Z_m$, which are not domains in general. One might hope that the coalgebra structure $\Delta:\BBar(A)\to \BBar(A)\otimes \BBar(A)$ induces a coalgebra structure on its cohomology as well. This is not true in general, as the following example shows.
\begin{example}[Non-coalgebra]\label{ex:non-coalgebra}
    Consider the augmented dg-coalgebra defined over $R=\Z_4$
    \[
    (C_0\to C_1\to C_2) = (R\cdot 1\xrightarrow 0 R\cdot x\xrightarrow{2} R\cdot y)
    \]
    with augmentation ideal $\bar C$ and reduced coproduct $\barDelta(x)=0$ and $\barDelta(y)=2x\otimes x$. Then $H^1(\bar C) = \langle2x\rangle$ and $H^2(\bar C\otimes \bar C) = \langle 2x\otimes x\rangle$, which means that the K\"unneth map $H^1(C)\otimes_R H^1(C)\to H^2(C\otimes C)$ is the zero map, though the domain is nontrivial.  But the coproduct $\langle y\rangle = H^2(\bar C)\xrightarrow{\barDelta_*} H^2(\bar C\otimes \bar C) = \langle2x\otimes x\rangle$ is nonzero. Thus the coproduct on $C$ does not induce a coproduct structure on its cohomology. 

\end{example}
While $H^0_{\BBar}(X;R)$ is not guaranteed to form a coalgebra, in our context it is nonetheless possible to embed the algebra $\Primary(X;R)$ in a cofree Hopf algebra. As such it gives essentially the same information as when $R$ is a PID.

\begin{proposition}\label{prop:primary over nonPID}
    Let $R$ be a commutative Noetherian ring. Consider the functor $X\mapsto \Primary(X;R)$ on the full subcategory of pointed path-connected topological spaces for which $H^*(X;R)$ is flat over $R$. It admits a natural graded embedding of commutative algebras $\Primary(X;R)\into T^c(H^1(X;R))$, whose target is equipped with the shuffle product.
\end{proposition}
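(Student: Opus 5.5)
The plan is to mimic the first half of the proof of \cref{prop:H0Bar is Hopf}, replacing the PID hypothesis by the flatness of $H^*(X;R)$. Set $A:=C^*(X;R)$. Since $R$ is Noetherian, $A$ is degreewise flat by Chase's theorem, exactly as in the proof of topological invariance. Path-connectedness gives $H^0(A)=R$, and $H^i(A)=0$ for $i<0$. Because $H^*(A)$ is assumed flat, the first case of \cref{lem:spectral sequence} applies. Its proof shows more than the total-degree-zero statement: the whole $E_1$ page of the weight spectral sequence is $\BBar(H^*(\bar A))$, which vanishes in negative total degree. In total degree zero it is $E_1^{-p,p}\cong H^1(A)^{\otimes p}$.

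Next I would argue exactly as before that no differential $d_r$ can hit total degree zero, since the sources would lie in total degree $-1$, where $E_1$ vanishes. Hence each $E_{r+1}^{-p,p}$ is the kernel of $d_r$ on $E_r^{-p,p}$, and so $E_\infty^{-p,p}\subseteq E_1^{-p,p}$. Convergence also needs checking. The filtration on $\BBar(A)$ is exhaustive and bounded below in each total degree, so the spectral sequence converges to $H^*(\BBar(A))$ and $E_\infty^{-p,p}\cong F_pH^0_{\BBar}/F_{p-1}H^0_{\BBar}$. Taking the direct sum over $p$ gives the graded $R$-module embedding $\Primary(X;R)\into T^c(H^1(X;R))$. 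Naturality follows because a pointed map $f$ induces a map of augmented dg-algebras, hence a filtered chain map on bar constructions, hence a map of spectral sequences. Flatness of $H^*$ is needed only on the source and target objects, not on the map.

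For the algebra structure I would use \cref{lem:product on bar}. The product on $H^0_{\BBar}(X;R)$ respects the weight filtration, so it induces a graded commutative product on $\Primary(X;R)$. I then need to show that the embedding into $E_1$ is multiplicative for the shuffle product. This is the main obstacle. Over a PID we could appeal to \cref{lem:shuffle is unique} via the coalgebra structure, but here the Künneth map need not be an isomorphism and $\Primary$ need not be a subcoalgebra (cf.\ \cref{ex:non-coalgebra}).

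My approach is to work on $E_1=T^c(H^1(A))$ itself, where no Künneth issue arises. The $E_\infty$-product $m$ on the cofibrant replacement $\BBar(P)$ is a filtered chain map. Since $H^*(A)$ is flat, $E_1(\BBar(P)\otimes\BBar(P))\cong E_1\otimes E_1$ by the Künneth spectral sequence. Hence $m$ induces a map $E_1\otimes E_1\to E_1$ compatible with the $E_\infty$-product, and the claim reduces to identifying this $E_1$-level product as the shuffle product.

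For that identification, note that $m$ is a coalgebra map at the chain level, so the induced product on $E_1$ is a graded map of coalgebras. The coproduct on $E_1$ is deconcatenation by \cref{lem:spectral sequence}, and over flat $H^*$ the coproduct on $E_1\otimes E_1$ is the diagonal one. Thus $E_1$ becomes a graded bialgebra structure on the cofree coalgebra $T^c(H^1(A))$, with the unit given by the empty tensor. Since $H^1(A)$ is flat as a summand of $H^*(A)$, \cref{lem:shuffle is unique} forces this product to be the shuffle product. Restricting to $E_\infty\subseteq E_1$ then shows the embedding is a homomorphism of graded commutative algebras.
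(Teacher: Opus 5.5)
Your proposal is correct and follows essentially the same route as the paper. Both arguments use the same chain of steps:
\begin{enumerate}
\item flatness of singular cochains over a Noetherian ring (Chase);
\item the first case of \cref{lem:spectral sequence}, giving the embedding $E_\infty\into E_1\cong T^c(H^1(X;R))$ in total degree zero;
\item the product transported to every page from the cofibrant replacement of \cref{lem:product on bar};
\item \cref{lem:shuffle is unique} applied on the $E_1$ page, where the cofree coalgebra structure is still available even though $\Primary$ itself need not be a subcoalgebra.
\end{enumerate}
Beyond this, you spell out points the paper leaves implicit: convergence of the weight spectral sequence, the unit axiom, and why the $E_1$-level product is a coalgebra map.
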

The analogs of Corollaries \ref{cor:equalH^1 means equal C} and \ref{cor:change basepoints} hold in this context as well.
\begin{corollary}
    Let $R$ be a commutative Noetherian ring and suppose $X$ and $Y$ are pointed path-connected spaces for which $H^*(-;R)$ is a flat $R$-module. If $X\rightrightarrows Y$ are two pointed maps that induce equal pullbacks on $H^1(-;R)$ then they induce equal pullbacks $\Primary(Y;R)\rightrightarrows \Primary(X;R)$.
    \qed
\end{corollary}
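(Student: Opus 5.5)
The plan is to deduce the corollary from the naturality statement in \cref{prop:primary over nonPID}, mirroring the proof of \cref{cor:equalH^1 means equal C}. Write $f,g:X\rightrightarrows Y$ for the two pointed maps. Pulling back singular cochains, $f^*,g^*:C^*(Y;R)\to C^*(X;R)$ are homomorphisms of augmented dg-algebras. Hence they induce maps of filtered dg-coalgebras $\BBar(f^*),\BBar(g^*)$, which strictly preserve tensor weight. They therefore induce maps of the weight spectral sequences, and on the $E_\infty$-pages they give the maps $f^*,g^*:\Primary(Y;R)\to\Primary(X;R)$.

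The key step is to identify the induced map on $E_1$ in total degree zero. Because $H^*(-;R)$ is flat for both spaces, the first case of \cref{lem:spectral sequence} identifies $E_1^{-p,p}$ with $H^1(-;R)^{\otimes p}$ via the K\"unneth isomorphism, and this identification is natural in the cochain algebra. So on $E_1^{-p,p}$ the map induced by $f$ is $H^1(f)^{\otimes p}$, and likewise for $g$. By hypothesis $H^1(f)=H^1(g)$, so the two maps agree on all of $E_1$ in total degree zero, i.e.\ $T^c(H^1(f))=T^c(H^1(g))$.

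It remains to pass from $E_1$ to $E_\infty$. As in the proof of \cref{prop:H0Bar is Hopf}, the $E_1$-page vanishes in negative total degree, so no differential hits total degree zero. Thus $E_\infty^{-p,p}\subseteq E_1^{-p,p}$ is a submodule, and the maps on $E_\infty$ are the restrictions of the maps on $E_1$. This is the naturality of the embedding $\Primary(-;R)\into T^c(H^1(-;R))$ asserted in \cref{prop:primary over nonPID}. We get a commutative square
\[
\xymatrix{
\Primary(Y;R) \ar@{^{(}->}[r] \ar@<.5ex>[d]^{g^*} \ar@<-.5ex>[d]_{f^*} & T^c(H^1(Y;R)) \ar[d]^{T^c(H^1(f))=T^c(H^1(g))} \\
\Primary(X;R) \ar@{^{(}->}[r] & T^c(H^1(X;R)).
}
\]
Its right vertical maps coincide and its bottom map is injective, so $f^*=g^*$ on $\Primary(Y;R)$.

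The only step that needs care is naturality of the K\"unneth identification $H^p(\bar A^{\otimes p})\cong H^1(A)^{\otimes p}$ in the non-PID setting. I would handle it by noting that the K\"unneth morphism $H^*(\bar A)^{\otimes p}\to H^*(\bar A^{\otimes p})$ is natural at the level of cocycles. Under flatness it is an isomorphism, so naturality of its inverse follows formally.
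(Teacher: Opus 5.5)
Your argument is correct and is the same reasoning the paper leaves implicit behind the \qed. By the naturality of the embedding $\Primary(-;R)\into T^c(H^1(-;R))$ from \cref{prop:primary over nonPID}, the induced map on $\Primary$ is the restriction of $T^c(H^1(f))$, so maps that agree on $H^1$ agree on $\Primary$. Your checks that the K\"unneth identification is natural and that $E_\infty\subseteq E_1$ in total degree zero simply fill in details the paper omits.
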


\begin{proof}[Proof of Proposition \ref{prop:primary over nonPID}]
    Let $X$ be a path-connected topological space with $H^*(X;R)$ flat over $R$ and set $A=C^*(X;R)$. The module $\Primary(X;R)$ is the associated graded of $H^0_{\BBar}(A)$ and coincides with the $E_\infty$-page of the weight spectral sequence for $\BBar(A)$.

    In every cohomological degree $q$ the cochain group $C^q(X;R)$ is isomorphic to some (possibly infinite) product of the ground ring $R$. By \cite[Theorem 4.47]{LAM}, such products of Noetherian rings are flat $R$-modules\footnote{For geometric cochains, $C^*_\Gamma(M)$ is always flat by \cref{L: goeometric flat}.}, so Lemma \ref{lem:spectral sequence} applies and gives an embedding in total degree zero
    \[
    \Primary(A) = (E_{\infty})^0 \into (E_1)^0 \cong T^c(H^1(A)).
    \]
    
    In \cref{lem:product on bar} we exhibited a quasi-isomorphism of filtered coalgebras $\BBar(P)\to \BBar(A)$ where the domain is equipped with a homotopy commutative algebra structure. A filtered quasi-isomorphism, in particular, induces an isomorphism of the associated spectral sequences starting with their first page. Through this isomorphism, there is a compatible algebra structure on all pages of the weight spectral sequence for $\BBar(A)$, starting with $E_1$ and converging to the algebra structure on $H^0_{\BBar}(A)$. The inclusion $\Primary(A)\into T^c(H^1(A))$ is therefore a homomorphism of graded algebras.


    It remains to show that the product structure on $E_1(\BBar(A))\cong T^c(H^1(A))$ is the standard shuffle product. By \cref{lem:spectral sequence}, our flatness assumptions endow this page with the cofree colagebra structure even when later pages of the spectral sequence are not coalgebras. The characterization now follows from uniqueness of a graded bialgebra structure on $T^c(H^1(A))$ proved in Lemma \ref{lem:shuffle is unique}. 
    
\end{proof}
\begin{definition}[Primary Milnor--Hopf algebraic invariant with non-PID coefficients]
    Let $R$ be a commutative Noetherian ring and $X$ a pointed space for which $H^*(X;R)$ is flat over $R$. Denote the associated graded of the filtered algebra $H^0_{\BBar}(X;R)$ by $\Primary(X;R)$. The {\bf primary Milnor--Hopf algebraic invariant} of $X$ is the inclusion of graded algebras
    \[
    \Primary(X;R)\into T^c(H^1(X;R)),
    \]
    where the target is the cofree graded Hopf algebra on $H^1(X;R)$ equipped with the shuffle product, but the image of $\Primary(X;R)$ is not necessarily closed under the coproduct.

    We will often abuse notation and write $\Primary(-;R)$ for the image of the associated graded in $T^c(H^1(-;R))$.
\end{definition}

Specializing the construction of $\Primary(X;R)$ to the case when $X$ is a link complement, we obtain the following concrete description. 

\begin{proposition}\label{prop:primary embedding for links}
Let $R$ be a commutative Noetherian ring and $M$ an $R$-homology 3-sphere. For a link $L:nS^1\into M$ there is an isomorphism
\[
H^1(M\setminus L;R) \cong \operatorname{Span_R}(S_1,\ldots,S_n)
\]
where $S_i$ is Alexander dual to the $i$-th link component. Equivalently, $S_i$ is the Poincar\'{e}--Lefschetz dual to the homology class of a Seifert surface for the $i$-th link component: this is any oriented immersed surface with boundary equal to the $i$-th component as an oriented one-manifold. Moreover, $H^*(M\setminus L;R)$ is free over $R$ and, in particular, flat.

If $f:M\to M'$ is a homeomorphism, then $f$ takes the components of the link $L$ to components of the link $L':=f\circ L$. Identifying each set of respective dual classes with the set $\{S_1,\dots, S_n\}$, we
obtain a commutative triangle
\[
\xymatrix{
H^1(M'\setminus L';R) \ar[rr]^-{f^*} \ar[rd] & &  H^1(M\setminus L;R) \ar[ld]\\
& \operatorname{Span_R}(S_1,\ldots,S_n). &\qedhere
}
\]

\qed


\end{proposition}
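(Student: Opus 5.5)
The plan is to compute $H^*(M\setminus L;R)$ by Alexander duality and then check naturality under homeomorphisms. First I would replace $M\setminus L$ by the link exterior $E = M\setminus \nu(L)$, where $\nu(L)$ is an open tubular neighborhood. The inclusion $E\into M\setminus L$ is a homotopy equivalence, so it suffices to compute $H^*(E;R)$. Alexander duality in the $R$-homology sphere $M$ gives
\[
\tilde H^q(M\setminus L;R)\cong \tilde H_{2-q}(L;R)
\]
for the compact subset $L\cong nS^1$. Using $\tilde H_1(nS^1;R)=R^n$ and $\tilde H_0(nS^1;R)=R^{n-1}$, this yields $H^0=R$, $H^1\cong R^n$, $H^2\cong R^{n-1}$, and zero otherwise. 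All of these groups are free, hence flat. (Equivalently, one can use the Mayer--Vietoris sequence for $M=E\cup \overline{\nu(L)}$ with intersection $n$ tori, together with $H^*(M;R)\cong H^*(S^3;R)$.)

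Next I would identify the basis explicitly. The duality isomorphism in degree $1$ sends a class $\alpha\in H^1(M\setminus L;R)$ to the vector of evaluations $(\alpha(\mathpzc{m}_1),\ldots,\alpha(\mathpzc{m}_n))$ on meridians. The meridians form a basis of $H_1(M\setminus L;R)$: the Mayer--Vietoris boundary sends them to the fundamental classes of the components, and the longitude contributions are killed because $M$ is a homology sphere. So $S_i$ is the dual basis element, with $S_i(\mathpzc{m}_j)=\delta_{ij}$, and $H^1\cong \Hom(H_1,R)$ by universal coefficients since $H_0$ is free.

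To see that this is the Poincar\'e--Lefschetz dual of a Seifert surface, take an oriented immersed surface $\Sigma_i$ with $\partial\Sigma_i=L_i$, which exists because $[L_i]=0$ in $H_1(M;\Z)$ (for $R\neq\Z$ one may need an $R$-chain, but I would just use the dual class). Restricted to $E$, it is a proper surface $(\Sigma_i\cap E,\partial)$. Under Lefschetz duality $H_2(E,\partial E)\cong H^1(E)$, the dual class evaluates on a $1$-cycle by counting signed intersections. A meridian $\mathpzc{m}_j$ meets $\Sigma_i$ in $\delta_{ij}$ points algebraically, after pushing $\mathpzc{m}_j$ near $L_j$ away from the other sheets. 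The dual class therefore equals $S_i$, independent of the choice of surface.

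For naturality, an orientation-preserving homeomorphism $f$ restricts to $M\setminus L\to M'\setminus L'$ and sends the meridian $\mathpzc{m}_i$ of $L_i$ to a meridian of $L'_i$ with the same orientation, because $f$ preserves both the orientation of $M$ and the orientations of the components. Hence $f^*S'_i$ evaluates to $\delta_{ij}$ on $\mathpzc{m}_j$, so $f^*S'_i=S_i$ and the triangle commutes. (The statement says ``homeomorphism'' but really needs orientation preservation together with $f\circ L=L'$, as in a link isomorphism.)

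I expect the main obstacle to be careful bookkeeping of signs and orientations. The intersection computation of meridians with Seifert surfaces, including immersed surfaces meeting other components, is routine once the meridians are fixed as the basis of $H_1$.
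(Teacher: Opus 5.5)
Your argument is correct. The paper states this proposition without proof, treating it as a standard consequence of Alexander duality, and your proposal supplies exactly that standard Alexander/Poincar\'e--Lefschetz duality computation. In particular, you identify $S_i$ as the basis of $H^1(M\setminus L;R)$ dual to the meridians, and you deduce naturality from the fact that $f$ carries meridians to meridians.

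Your caveats are also apt. First, the triangle commutes on the nose only for orientation-preserving $f$, as in a link isomorphism; otherwise $f^*S'_i=-S_i$, which is harmless for the graded submodule $\Primary(L)$. Second, when $M$ is merely an $R$-homology sphere, a component need not bound an integral Seifert surface. In that case the Seifert-surface description should be read with $R$-chains, such as rational Seifert surfaces.
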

\begin{corollary}
    For any commutative Noetherian ring $R$ and $L:nS^1\into M$ a link in an $R$-homology 3-sphere, there is an identification
    \[
    T^c(H^1(M\setminus L;R)) \cong R\langle S_1,\ldots,S_n\rangle
    \]
    functorial in homeomorphisms 
    of the ambient space. It follows that the leading-term invariant comes with a functorial embedding
    \[
    \Primary(M\setminus L;R)\into R\langle S_1,\ldots,S_n\rangle
    \]
    and is independent of a choice of exterior basepoint.
    
    Consequently, if $L_1:nS^1\into M_1$ and $L_2:nS^1\into M_2$ are links related by a homeomorphisms of the ambient manifolds, then $\Primary(M_1\setminus L_1;R) = \Primary(M_2\setminus L_2;R)$ as submodules of $R\langle S_1,\ldots,S_n\rangle$.
    \qed
\end{corollary}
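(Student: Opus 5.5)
The plan is to assemble the corollary from \cref{prop:primary embedding for links} together with the naturality statements in \cref{prop:H0Bar is Hopf} and \cref{prop:primary over nonPID}. The identification $T^c(H^1(M\setminus L;R))\cong R\langle S_1,\ldots,S_n\rangle$ comes from applying the functor $T^c(-)$ to the isomorphism $H^1(M\setminus L;R)\cong \operatorname{Span}_R(S_1,\ldots,S_n)$ of \cref{prop:primary embedding for links}. By construction, $R\langle S_1,\ldots,S_n\rangle$ is $T^c(V)$ with $V$ the free module on the symbols $S_i$, and it carries the shuffle product. Functoriality under ambient homeomorphisms $f$ follows by applying $T^c$ to the commutative triangle in that proposition. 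Since $T^c$ is a functor, we get a commutative triangle of coalgebras, $T^c(f^*)$ followed by the identification for $L$ equals the identification for $L'$. These are also algebra maps for the shuffle products, since $T^c$ of a linear map is a shuffle-algebra map, or alternatively by \cref{lem:shuffle is unique}.

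For the embedding, \cref{prop:primary embedding for links} shows that $H^*(M\setminus L;R)$ is free, hence flat. So \cref{prop:primary over nonPID} applies, and when $R$ is a PID \cref{prop:H0Bar is Hopf} applies as well. Either one gives a natural graded algebra embedding $\Primary(M\setminus L;R)\into T^c(H^1(M\setminus L;R))$, and composing with the identification yields the embedding into $R\langle S_1,\ldots,S_n\rangle$.

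Basepoint independence uses \cref{cor:change basepoints} and its non-PID analogue, the corollary following \cref{prop:primary over nonPID}. The link complement is a connected manifold, so for two basepoints $x_0,x_1$ there is a self-map $f$ carrying $x_0$ to $x_1$ that is isotopic to the identity, for example a point-pushing diffeomorphism along a path. Such an $f$ acts as the identity on $H^1$. Hence, via naturality of the embedding, the two images in $T^c(H^1)$ coincide, and therefore they coincide in $R\langle S_i\rangle$.

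For the final statement, let $f:M_1\to M_2$ be a homeomorphism with $f\circ L_1=L_2$. It restricts to a homeomorphism of complements $g:M_1\setminus L_1\to M_2\setminus L_2$, and we choose basepoints compatibly, which is allowed by the previous step. Naturality gives a commutative square: $g^*:\Primary(M_2\setminus L_2)\to\Primary(M_1\setminus L_1)$ is compatible with $T^c(g^*)$ on the targets. Since $g$ is a homeomorphism, $g^*$ is an isomorphism, so $T^c(g^*)$ carries the image of $\Primary(M_2\setminus L_2)$ onto the image of $\Primary(M_1\setminus L_1)$. Under the identifications with $R\langle S_i\rangle$, $T^c(g^*)$ becomes the identity by the triangle above, so the two submodules are equal.

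The main point to check is that $g^*$ respects the labeled dual classes, namely $g^*S_i=S_i$ with the correct signs. This needs $f$ to preserve orientation of $M$ and of the components, so that the Alexander duals match. In the statement this is packaged into \cref{prop:primary embedding for links}, and I would verify it by noting that $g^*S_i$ evaluates to $1$ on the meridian of $L_{1,i}$, using the orientation conventions for link isomorphisms.
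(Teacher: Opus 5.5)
Your proposal is correct and follows the paper's route: the paper records this corollary with a bare \qed as an immediate consequence of \cref{prop:primary embedding for links} (apply $T^c$ to its triangle), the natural embedding of \cref{prop:primary over nonPID} (or \cref{prop:H0Bar is Hopf} when $R$ is a PID), and the basepoint-change corollaries. Your extra checks supply exactly the details the paper leaves implicit: that $g^*$ is invertible, so the two images agree rather than merely one containing the other, and that the classes $S_i$ are preserved only by maps respecting the orientations of $M$ and of the components, as required in the definition of a link isomorphism.
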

In a sequel to this work we extend this functoriality from ambient homeomorphisms to arbitrary topological concordances.
\begin{definition}[Leading-term link invariant]
    For a labeled link $L:nS^1\into M$ in an $R$-homology 3-sphere, and when the coefficient ring $R$ is clear from context, the {\bf Milnor--Hopf leading-term invariant} of the link, denoted by $\Primary(L)$, is the image of the functorial submodule $\Primary(M\setminus L;R)$ in $R\langle S_1,\ldots,S_n\rangle$.
\end{definition}
\begin{remark}[Higher dimensions]\label{rmk:higher dims}
    Our leading-term invariant is equally useful for studying linked two-spheres in $S^4$ and higher-dimensional generalizations. Here we focus on 3-manifolds as a proof of concept.
\end{remark}

We can now prove \cref{P: associated graded augmentation}, which relates the leading-term invariants of links to their associated link groups.
Recall that $I(L)$ denotes the augmentation ideal of the group ring $R[\pi_1(M\setminus L)]$, omitting the coefficients from the notation.

\begin{proof}[Proof of \cref{P: associated graded augmentation}]
    Let $R$ be a field, and let $L_0$ and $L_1$ be two links in an $R$-homology sphere $M$ such that $\Primary(L_0)=\Primary(L_1)$.  By work of the second author \cite[Theorem 1.3]{Gad23}, there are natural isomorphisms of filtered coalgebras
    \begin{equation}
    \label{eq:H0Bar vs I-adic completion}
        \varinjlim_{p\to \infty} \Hom(R[\pi(L_i)]/I^{p+1},R) \cong H^0_{\BBar}(L_i;R).
    \end{equation}
    In particular, $\Hom(R[\pi(L_i)]/I^{p},R) \cong H^0_{\BBar}(L_i;R)_{\leq p}$, which implies an isomorphism of associated graded coalgebras 
    \[
    \bigoplus_{p=0}^{\infty} \Hom(I(L_i)^p/I(L_i)^{p+1},R) \cong \Primary(L_i).
    \]
    Since both objects in the last isomorphism are in fact graded Hopf algebras and the leading-term invariant is a sub-Hopf algebra of $R\langle S_1,\ldots, S_n\rangle$, the uniqueness statement for graded Hopf structures forces the product structures to coincide.
    
    An equality of leading-term invariants thus determines an isomorphism of graded Hopf algebras
    \[
    \bigoplus_{p=0}^{\infty} \Hom(I(L_0)^p/I(L_0)^{p+1},R) \cong     \bigoplus_{p=0}^{\infty} \Hom(I(L_1)^p/I(L_1)^{p+1},R)
    \]
    that restricts to an identification 
    \[
    \Hom (I(L_0)/I(L_0)^2,R) \cong \mathrm{Span}_R(S_1,\ldots,S_n) \cong \Hom(I(L_1)/I(L_1)^2,R)
    \]
    in weight one.

    Computing the graded $R$-linear duals and using the finite-dimensionality of each quotient $I^p/I^{p+1}$, we get an isomorphism
    \[
    \bigoplus_{p=0}^{\infty}  I(L_0)^p/I(L_0)^{p+1} \cong     \bigoplus_{p=0}^{\infty} I(L_1)^p/I(L_1)^{p+1}.
    \]
    Observing that the meridian classes $(\mathpzc{m}_k(L)-1) \in I(L)/I(L)^2$ with $k=1,\dots,n$ form a dual basis to $(S_1,\ldots,S_n)$, we see that this isomorphism must preserve such meridians.

    Conversely, an isomorphism of the Hopf algebras     $\bigoplus_{p=0}^{\infty}  I(L_i)^p/I(L_i)^{p+1}$ respecting meridians dualizes to an isomorphism $\Primary(L_0)\cong \Primary(L_1)$ that commutes with evaluation on meridian classes. But this evaluation defines the cogenerator projections $\Primary(L_i)\to \operatorname{Span_R(S_1,\ldots,S_n)}$, which in turn defines the inclusion $\Primary(L_i)\into R\langle S_1,\ldots,S_n\rangle$. The isomorphism of leading-term invariants is thus compatible with the respective injections, identifying $\Primary(L_i)$ as submodules of $R\langle S_1,\ldots,S_n\rangle$.
\end{proof}

\section{Geometric calculations of the leading-term invariant}\label{S:examples}

Our preferred computational method for verifying membership of non-commuting polynomials $P(S_1,\ldots,S_n)\in R\langle S_1,\ldots,S_n\rangle$ in the leading-term Milnor--Hopf algebraic link invariant $\Primary(L)$ is by constructing systems of surfaces that represent cocycles in $Bar^0(M\setminus L;R)$. This approach makes explicit use of the theory of geometric cochains $C^*_\Gamma(-)$ developed by authors GF and DS together with Medina-Mardones \cite{FMS-foundations}, recalled briefly in \cref{A:geometric}.

Technically, every cochain algebra model of a space (e.g. singular, geometric, or de Rham) gives a distinct link invariant through its bar cohomology, but for models related through an $A_\infty$-quasi-isomorphism the resulting bar invariants are isomorphic. Such a quasi-isomorphism from the de Rham to the singular complex was constructed by Gugenheim \cite{Gu76} using Chen integrals, thus proving that the two bar cohomology theories agree. The approach below uses geometric cochains, realized through honest immersed manifolds and their intersections, which as of the time of writing these words are not proved to be quasi-isomorphic as an algebra to the other models.
\begin{conjecture}
    There exists a natural $A_{\infty}$-quasi-isomorphisms 
    \[
    C^*_\Gamma(-) \longrightarrow C^*_{\operatorname{Sing}}(-;\Z).
    \]
    Equivalently, the respective bar constructions are naturally quasi-isomorphic as dg coalgebras.
\end{conjecture}
Contrasting the geomteric and singular models, the former has advantage of being readily computable using the geometry of the link complement, while the latter is known to be highly nontrivial as it directly relates to Milnor's invariants.

We were pleased to learn about forthcoming dissertation work of A. Pizzi \cite{Pizzi} that constructs a zig-zag of quasi-isomorphisms of partial algebras between the geometric and singular cochain models, thus demonstrating a quasi-isomorphism between their respective bar constructions. In particular, this identifies the various flavors of the bar cohomology link invariant.

\subsection{Making calculations with surface systems}\label{surface systems}

Now for calculations, \cref{prop:geometric criterion} gives a geometric criterion for membership in $\Primary(L)$ using complete surface systems, introduced next. The following definition might seem complicated, but in special cases it is often very computable, as shown in examples later in this section.

In this section, our coefficient ring $R$ is always $\Z$ or $\Z_m$.

We begin by recording the orientation conventions consistent with geometric cochains as established in \cite[Section 8.2.1]{FMS-foundations}. A Seifert surface $S$ with oriented boundary link component $L$ is oriented so that, at $L$, an outward pointing normal tangent to $S$ followed by the orientation of $L$ agrees with the orientation of the ambient $S$. This is consistent with the standard ``counterclockwise’’ rule for planar regions. 
If link component $L_1$ pierces Seifert surface $S_2$ so that the orientation of $L_1$ followed by the orientation of $S_2$ agrees with the orientation of $M$, then the sign of this intersection point of $L_1\cap S_2$ is +1, and the corresponding intersection curve of $S_1\cap S_2$ is oriented away from $L_1$. Consequently, the total linking number of $L_1$ with $L_2$ is equal to the number of intersection curves of $S_1\cap S_2$ oriented from $L_1$ to $L_2$ minus the number of oriented intersection curves from $L_2$ to $L_1$.  

\begin{definition}[Surface systems]
    Let $L:nS^1\into M$ be a link. An {\bf incomplete surface system} of weight $p$ for $L$ is a collection $\bigsqcup_{\ell=2}^p \{(\Sigma_1^\alpha,\ldots ,\Sigma_\ell^\alpha)\}_{\alpha\in I_\ell}$ of tuples of immersed surfaces, where $I_2, \dots, I_p$ are some finite index sets, satisfying the following conditions:
    \begin{itemize}
        \item Every $\Sigma_{j}^\alpha$ is a connected, properly immersed, oriented  surface $\Sigma_{j}^\alpha\looparrowright (M\setminus L)$, possibly with corners.
        \item For all $\ell$ with $2\leq \ell \leq p$ and all $\alpha\in I_\ell$, every subset of the set of surfaces $\{\Sigma_j^\alpha\mid 1\leq j\leq \ell\}$ meets transversely\footnote{Transversality of a set of maps means that not only must each pair be transverse, but each intersection (fiber product) of a pair must be transverse to all other elements, and so on for all possible iterated intersections. Additionally, any boundary components must also satisfy all transversality conditions. We always mean transversality in this generalized sense. } in $M\setminus L$, with disjoint boundaries and loci of double-points.
        \item  Given $1\leq j\leq \ell< p$ and an $(\ell-1)$-tuple $\bar\Sigma =(\Sigma_1,\ldots,\Sigma_{\ell-1})$ of immersed surfaces, let 
        $$
        I_{j,\ell}(\bar\Sigma) := \{ \alpha\in I_\ell \mid (\Sigma_1^\alpha,\ldots,\widehat{\Sigma_j^\alpha},\ldots,\Sigma_\ell^\alpha) =\bar\Sigma \}\subseteq I_\ell$$ and
        $$
        I_{j,\ell+1}(\bar\Sigma) := \{ \alpha\in I_{\ell+1} \mid (\Sigma_1^\alpha,\ldots,\widehat{\Sigma_j^\alpha},\widehat{\Sigma_{j+1}^\alpha},\ldots,\Sigma_\ell^\alpha) =\bar\Sigma \}\subseteq I_{\ell+1}
        $$
        be the subsets of $\ell$-tuples ($(\ell+1)$-tuples) that agree with $\bar\Sigma$ outside the $j$-th ($j$-th and $j+1$-st) positions. Then for every $j$ and $\ell $ such that $1\leq j\leq \ell<p$ and for every tuple of surfaces $\bar\Sigma$, the following equality of geometric $1$-cocycles with $R$-coefficients holds:\footnote{The technical meaning of this equality is as one of geometric cochains $C^*_\Gamma(M\setminus L;R)$, which is somewhat involved. However, since these are $1$-cocycles, one can understand equality geometrically in analogy with an equality of singular chains. Sums correspond to unions of oriented arcs, negatives correspond to inverting orientation, and the zero element corresponds to the empty $1$-chain (which can be achieved through cancellations).}
        \begin{equation}\label{eq:vert diff = hors diff}
            \sum_{\alpha\in I_{j,\ell}(\bar\Sigma)} \partial \Sigma_j^\alpha  = \sum_{\beta\in I_{j,\ell+1}(\bar \Sigma)} (\Sigma_j^\beta \cap \Sigma_{j+1}^\beta).
        \end{equation}
        \item For all $p$-tuples $\alpha\in I_p$ and all $j$ such that $1\leq j\leq p$, the surface $\Sigma_j^\alpha$ is a Seifert surface for one of the link components: its boundary in $M$ is a compatibly oriented link component $L\circ i:S^1\into nS^1\to M$. Let $\operatorname{cl}(\Sigma_j^\alpha) := S_i\in H^1(M\setminus L;R)$ 
        be the cohomology class that it represents
        via intersection with (transverse) $1$-cycles.
    \end{itemize}
    We say that the {\bf leading term} of such an incomplete surface system is the cohomology class\footnote{While our leading-term expression does not explicitly include coefficients, they can be achieved by indexing the same collection of surfaces multiple times and/or including the same surface with different orientations.}
        \[
        \sum_{\alpha\in I_p} [\operatorname{cl}(\Sigma_1^\alpha)|\ldots |\operatorname{cl}(\Sigma_p^\alpha)]\in H^1(M\setminus L;R)^{\otimes p},
        \]
    and the {\bf boundary} of the incomplete surface system is $\sum_{\alpha\in I_2} \Sigma_1^\alpha\cap \Sigma_2^\alpha$, which always consists of curves in $M\setminus L$ whose sum has no boundary, and so it represents a geometric 2-cocycle; see \cref{A:geometric}.

    A {\bf complete surface system} is the union of an incomplete surface system together with a collection of immersed surfaces $\{ (\Sigma_1^\alpha) \}_{\alpha\in I_1}$ indexed by some set $I_1$ such that
    \[
    \sum_{\alpha\in I_1} \partial \Sigma_1^\alpha  =\sum_{\beta\in I_2} (\Sigma_1^\beta \cap \Sigma_{2}^\beta),
    \]
    witnesses the vanishing of the cohomology class $\left[\sum (\Sigma_1^\beta \cap \Sigma_{2}^\beta)\right]\in H^2(M\setminus L;R)$.
\end{definition}

\begin{proposition}[Geometric criterion for the leading-term invariant]\label{prop:geometric criterion}
    Fix $R=\Z$ or $\Z_m$ and $M$ an $R$-homology 3-sphere. For a link $L:nS^1\into M$, a homogeneous bar polynomial
    \[
    \sum_{i_1,\ldots,i_p} r_{i_1\cdots i_p} \cdot[S_{i_1}|\cdots |S_{i_p}] \in R\langle S_1,\ldots S_n\rangle,
    \]
    belongs to the leading-term invariant $\Primary(L)$ whenever there exists a complete surface system with this leading term.

    Conversely, suppose that one can find an {\bf incomplete surface system} with this leading term and boundary $\sum (\Sigma_1^\alpha\cap \Sigma_2^\alpha)$.
    Then the leading term $\sum_{i_1,\ldots,i_p} r_{i_1\cdots i_p} \cdot[S_{i_1}|\cdots |S_{i_p}]$ belongs to $\Primary(L)$ if and only if there exists some element $T\in \BBar(C^*_\Gamma(M\setminus L;R))$ of weight $<p$ with total boundary of weight one $d_{\BBar}T\in C^2(M\setminus L;R)$ representing the cohomology class
    $[\sum_{\alpha\in I_2} \Sigma_1^\alpha\cap \Sigma_2^\alpha]\in H^2(M\setminus L;R)$.
\end{proposition}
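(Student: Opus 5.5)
The plan is to translate a surface system directly into an element of $\BBar^0(C^*_\Gamma(M\setminus L;R))$ and read off the cocycle condition weight by weight. Given a (complete or incomplete) surface system, form
\[
Z := \sum_{\ell=1}^{p}\sum_{\alpha\in I_\ell} [\Sigma_1^\alpha|\cdots|\Sigma_\ell^\alpha],
\]
where each $\Sigma_j^\alpha$ is regarded as a geometric $1$-cochain. The sum starts at $\ell=2$ in the incomplete case. The transversality hypotheses are exactly what is needed for all the products $\Sigma_j^\alpha\cap\Sigma_{j+1}^\alpha$ to be defined in the Leinster partial algebra, so $Z$ is an element of the geometric bar complex of \cref{A:geometric}. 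In total degree zero every factor has degree $1$, so the differentials carry no signs.

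We compute the weight-$(\ell)$ part of $d_{\BBar}Z = d_vZ + d_hZ$ for $\ell\ge 2$. It receives $d_v$ of the weight-$\ell$ terms, namely $\sum_\alpha\sum_j[\ldots|\partial\Sigma_j^\alpha|\ldots]$. It also receives $d_h$ of the weight-$(\ell+1)$ terms, namely $\sum_\beta\sum_j[\ldots|\Sigma_j^\beta\cap\Sigma_{j+1}^\beta|\ldots]$. Grouping these by the tuple $\bar\Sigma$ that remains after deleting position $j$ (respectively positions $j,j+1$), the two contributions coincide term by term by \eqref{eq:vert diff = hors diff}. A sign convention check is needed so that they cancel rather than add; we adopt whatever orientation convention on $\partial$ makes $d_v+d_h$ vanish, consistent with \cite[Section 8.2.1]{FMS-foundations}. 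At the top weight $p$, the $d_v$ term vanishes because Seifert surfaces have boundary on $L$, hence no boundary in $M\setminus L$; they are cocycles representing $S_{i}$. The only remaining component of $d_{\BBar}Z$ therefore lies in weight one. In the incomplete case it equals
\[
\pm\sum_{\beta\in I_2}\Sigma_1^\beta\cap\Sigma_2^\beta \in C^2_\Gamma .
\]
In the complete case it is $\sum_{I_1}\partial\Sigma_1^\alpha - \sum_{I_2}\Sigma_1^\beta\cap\Sigma_2^\beta = 0$. So a complete system yields a bar cocycle in $F_p$. Its image in $E_1^{-p,p}=H^1(A)^{\otimes p}$ is the stated leading term. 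Hence that leading term is a permanent cycle lying in $E_\infty=\Primary(L)$, which gives the first claim via the identification in \cref{prop:H0Bar is Hopf} and \cref{prop:primary over nonPID}. Flatness holds by \cref{prop:primary embedding for links} and \cref{L: goeometric flat}.

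For the converse, suppose $Z$ comes from an incomplete system. If $T$ has weight $<p$ with $d_{\BBar}T$ cohomologous to $\partial$-data $c=\sum\Sigma_1\cap\Sigma_2$, write $c-d_{\BBar}T = \delta u$ for some $1$-cochain $u$. Then $Z-T+u$ (with suitable signs) is a cocycle whose leading term is unchanged, so the polynomial lies in $\Primary(L)$.

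Conversely, if the polynomial lies in $\Primary(L)$, pick a cocycle $W\in F_p$ with the same $E_1$ leading term. Then $Z-W$ has leading weight-$p$ part whose class in $H^1(A)^{\otimes p}$ vanishes. The key step is to use the identification $H^p(\bar A^{\otimes p})\cong H^1(A)^{\otimes p}$ from \cref{lem:spectral sequence}: a weight-$p$ difference with zero class is $d_v$ of something plus lower-weight corrections. This lets us adjust $Z-W$ by a coboundary to reduce to weight $<p$, giving $Z-W-d_{\BBar}V = T'$ of weight $<p$. Then $d_{\BBar}T' = d_{\BBar}Z$ is exactly the weight-one cochain $c$, so $T:=T'$ works; in fact this gives the identity on the nose, not merely in cohomology.

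I expect the main obstacle to be this last reduction. One must check that the difference of two weight-$p$ cocycle representatives of the same $E_1$ class can be killed modulo lower weight. This requires $d_v$-exactness at the $E_0$ level, which holds because $Z$ and $W$ both give $d_v$-cycles in $\bar A^{\otimes p}$ with equal class. The argument must be run in the geometric bar complex, where products are only partially defined, so each correction term has to be chosen transverse to the others. I would handle this by invoking the general-position moves available in \cref{A:geometric}, and only then apply the spectral sequence argument.
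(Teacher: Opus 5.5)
Your proposal is correct and follows essentially the same route as the paper's proof. The surface system gives a bar cochain whose $d_v$ and $d_h$ contributions cancel by \eqref{eq:vert diff = hors diff} except in weight one, and a $1$-cochain correction gives the ``if'' direction. For ``only if'', the weight-$p$ part of $Z-W$ is killed modulo lower weight by some $d_{\BBar}V$ using the $E_1$-page; in the geometric model this is \cref{prop:geometric weight spectral sequence}, with $E_1^{-p,*}=H^*(A^{[p]})$, so $V$ automatically lies in the transverse domain and no separate general-position step is needed. The one repair concerns signs: instead of choosing an orientation convention on $\partial$, multiply the weight-$\ell$ terms by $(-1)^{p-\ell}$, as in the paper's $\mathscr{P}(\Sigma)$. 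This is what makes the two contributions cancel, since the bar differentials carry no signs in total degree zero.
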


We provide some remarks and examples before proving this proposition.

\begin{remark}[Generalized Massey products]
    The reader familiar with Massey products will notice that the criterion given in the last proposition is a generalization of the standard formula for the vanishing of a Massey product. Indeed, by transferring the associative product on $C^*(-)$ to an $A_\infty$-structure on $H^*(-)$, cocycles in the bar construction can be identified with tensors of cohomology classes on which all Massey products vanish, but where non-homogeneous linear combinations are allowed.

    We prefer a hands-on geometric approach using surfaces since we do not have a more direct way to compute higher Massey products.
\end{remark}

\begin{remark}\label{rmk:algorithm}
One can attempt to construct these using the following {\bf surface system algorithm}:
\begin{enumerate}[label = Step \arabic*),leftmargin=2.3cm]
    \item To show that $\sum \lambda_\alpha [S_{\alpha_1}|\ldots|S_{\alpha_p}]$ belongs to $\Primary(L)$, with $\lambda_\alpha \in R=\Z$ or $\Z_m$ pick a Seifert surface $\Sigma_i\looparrowright M$ for every link component, so $\Sigma_i$ represents the cohomology class $S_i\in H^1(M\setminus L;R)$. Then every tensor $[S_{\alpha_1}|\ldots|S_{\alpha_p}]$ is represented geometrically by a $p$-tuple of surfaces $(\Sigma_{\alpha_1},\ldots,\Sigma_{\alpha_p})$ and coefficients can be realized by indexing the same collection multiple times and/or considering the same surfaces with reversed orientations.
    \item When the (oriented) intersection $\Sigma_{\alpha_j}\cap \Sigma_{\alpha_{j+1}}$ bounds a properly-immersed oriented surface-with-corners in $M \setminus L$, pick such a surface $\Sigma_{j,j+1}$ that is transverse to the set of all previously chosen surfaces and record the $(p-1)$-tuples obtained by $[\ldots|\Sigma_{\alpha_j}|\Sigma_{\alpha_{j+1}}|\ldots] \rightsquigarrow [\ldots|\Sigma_{j,j+1}|\ldots]$. It might be necessary to consider linear combinations before one finds that the $j$-th intersection bounds; in particular, it will be necessary to orient properly to satisfy \eqref{eq:vert diff = hors diff}.
    \item Iterate the process for every intersection of consecutive surfaces, and on every $\ell$-tuple produced by previous iterations, producing even shorter tuples of surfaces.
    \item The process stops when all intersections have been bounded or when one arrives at a $1$-cycle that does not bound.
\end{enumerate}
If all intersections have been successfully bounded, then $\sum \lambda_\alpha [S_{\alpha_1}|\ldots|S_{\alpha_p}]\in \Primary(L)$.

Technically, we must be careful in such constructions to always choose surfaces with sufficient transversality. The machinery of geometric cochains shows that when a bar cocycle exists this can be done \cite{GBF47}. When the same Seifert surface appears multiple times in the same expression, we must use small pushoffs to obtain disjoint copies for these purposes.
\end{remark}

\begin{example}\label{ex:geometric cocycle for a 3-chain}
    To show that $([S_1|S_2]+[S_2|S_3] + [S_3|S_1])|[S_4] \in \Primary(L)$, one may proceed by first picking Seifert surfaces $\Sigma_1,\ldots,\Sigma_4$. If possible, find immersed surfaces such that\footnote{In this context, addition means disjoint union of maps, i.e. $f \sqcup g$. The images of $f$ and $g$ need not be disjoint. Negative signs reverse orientation.}
    \[
    \partial\Sigma_{123} = (\Sigma_1\cap \Sigma_2) +(\Sigma_2\cap \Sigma_3) + (\Sigma_3\cap \Sigma_1) \quad \text{and} \quad\partial\Sigma_{i4} = \Sigma_i\cap \Sigma_4,
    \]
    then add the pairs $(\Sigma_{123},\Sigma_4)$ and $(\Sigma_1,\Sigma_{24}),(\Sigma_2,\Sigma_{34}),(\Sigma_3,\Sigma_{14})$ to the list of tuples $I_2$. This gives an incomplete surface system with boundary
    \[
    (\Sigma_{123}\cap \Sigma_4) + (\Sigma_{1}\cap \Sigma_{24})+ (\Sigma_{2}\cap \Sigma_{34})+ (\Sigma_{3}\cap \Sigma_{14}).
    \]

    The boundary cohomology class is zero in $H^2(M\setminus L;R)$ if and only if the incomplete surface system can be completed. Thus, $[S_1|S_2|S_4]+[S_2|S_3|S_4] + [S_3|S_1|S_4] \in \Primary(L)$ whenever the boundary cohomology class vanishes. It could, however, happen that this boundary class is nonzero but still the last leading-term membership holds: one only needs the class to be in the image of the cup product $H^1\otimes H^1\to H^2$. As such, whether or not vanishing of the boundary is necessary depends only on the pairwise linking numbers of $L$, e.g., it is necessary when the linking numbers vanish in $R$ and unnecessary when they are all units.
\end{example}

\begin{remark}[Complete geometric obstruction]\label{rmk:geometric is sufficient}
    Heuristically, we can see that the existence of a complete surface system should be necessary as well as sufficient for membership in $\Primary(L)$. As transversality is a generic condition and Seifert surfaces always exist, the only obstruction for membership is equation \eqref{eq:vert diff = hors diff}: the intersections $\sum (\Sigma_j^\beta\cap \Sigma_{j+1}^\beta)$, which together form circles and arcs connecting link components, may or may not be a boundary in $M\setminus L$, and this sum bounds if and only if the number of outgoing arcs at each link component equals the number of incoming arcs.

    When the sum is homologically trivial, there already exists an immersed surface with that boundary and we may run the surface system algorithm described in \cref{rmk:algorithm} to construct a system of surfaces witnessing membership in $\Primary(L)$. We therefore expect that the existence of an arbitrary bar cocycle already implies existence of a complete surface system with the same leading term. We plan to formalize this in future work.
\end{remark}

\begin{proof}[Proof of \cref{prop:geometric criterion}]
    The input in this proof is the geometric cochain complex $C^*_\Gamma(M\setminus L;R)$; see \cref{A:geometric}. Every properly immersed, oriented  surface with corners $\Sigma \looparrowright M\setminus L$ represents a cochain, denoted $\Sigma\in C^*_\Gamma(M\setminus L;R)$ by abuse of notation. The bar construction on $C^*_\Gamma(M\setminus L;R)$ consists of tensors of these cochains in which all factors of each monomial are transverse (as sets of immersed surfaces).
    
    Suppose there exists a complete surface system with leading term $P(S_1,\ldots,S_n)$. For every tuple $(\Sigma_1^\alpha,\ldots,\Sigma_\ell^\alpha)$ of the surface system, construct a tensor of geometric cochains $[\Sigma_1^\alpha |\cdots |\Sigma_\ell^\alpha]\in \BBar_\Gamma^0(C^*_\Gamma(M\setminus L;R))$ (see \cref{A:geometric}). Then the equality in \eqref{eq:vert diff = hors diff} states that the horizontal bar differential $d_h$ cancels with the vertical bar differential $d_v$, together giving an element
    \begin{equation}\label{eq:P(S) for a surface system}
        \mathscr{P}(\Sigma) :=\sum_{\ell=1}^p (-1)^{p-\ell}\sum_{\alpha\in I_\ell}[\Sigma_1^\alpha |\cdots |\Sigma_\ell^\alpha] \in \BBar_\Gamma^0(C^*_\Gamma(M\setminus L;R))
    \end{equation}
    that is a cocycle for the total bar differential $d_{\BBar}$. Its image in the associated graded is exactly its leading term, so $P(S_1,\ldots,S_n)\in \Primary(L)$.

    For the converse statement, suppose an incomplete surface system exists and define $\mathscr{P}(\Sigma)\in \BBar^0$ as above, but without the $\ell=1$ summand, so that $d_{\BBar}\mathscr{P}(\Sigma) = (-1)^p\sum_{\alpha\in I_2}(\Sigma_1^\alpha\cap \Sigma_2^\alpha)$ represents the boundary of the surface system up to sign. Suppose there exists $T_{<p}\in \BBar_\Gamma^0(C^*_\Gamma(M\setminus L;R))$ of weight $<p$ with $d_{\BBar}(T_{<p})\in C^2_\Gamma(M\setminus L;R)$ cohomologous to $\sum_{\alpha\in I_2}(\Sigma_1^\alpha\cap \Sigma_2^\alpha)$ as cocycles in $C^2_\Gamma(M\setminus L;R)$, say with 
    \[
    \sum_{\alpha\in I_2}(\Sigma_1^\alpha\cap \Sigma_2^\alpha) - d_{\BBar}(T_{<p}) = d(\Sigma_1)
    \]
    for some geometric cochain $\Sigma_1\in C^1_\Gamma(M\setminus L;R)$. Then,
    \[
    \mathscr{P}(\Sigma) - (T_{<p} + \Sigma_1) \in \BBar^0_\Gamma(C^*_\Gamma(M\setminus L;R))
    \]
    is a cocycle with leading term $P(S_1,\ldots,S_n)$ since all added terms have strictly smaller weight. 
    
    For the ``only if'' direction, suppose that the leading term $P(S_1,\ldots,S_n)$ belongs to $\Primary(L)$, meaning that there exists some cocycle $T_p\in \BBar_{\Gamma}(C^*_\Gamma (M\setminus L;R))$ with leading term $P(S_1,\ldots,S_n)$. The weight spectral sequence for $\BBar_\Gamma(C_\Gamma^*(M\setminus L;R))$ (see \cref{prop:geometric weight spectral sequence}) has $E_1^{-p,p}\cong H^1(M\setminus L;R)^{\otimes p}$ and the projection of the difference $\mathscr{P}(\Sigma)-T_p$ to $E_1^{-p,p}$  represents the zero element.
    This implies that there exists some $T'\in \BBar^{-p,p-1}_\Gamma$ such that $d_{\BBar}(T') \equiv \mathscr{P}(\Sigma)-T_p$ modulo $\BBar_{<p}$, the submodule of terms of lower weight. 
    Then $T_{<p} := \mathscr{P}(\Sigma)-T_p -d_{Bar}(T')$ is an element of weight $<p$ satisfying
    \[
    d_{\BBar}(T_{<p}) = d_{Bar}(\mathscr{P}(\Sigma))-\cancel{d_{\BBar}(T_p)} - \cancel{d_{\BBar}^2(T')}
    \]
    as claimed to exist.\qedhere

\end{proof}

\subsection{Examples of the leading-term invariant via surface systems}

In the remainder of this section, we apply the geometric criterion and algorithm in examples.

\begin{example}[A boundary link]
    Let $L:nS^1\into M$ be an $n$-component boundary link, i.e., one for which it is possible to pick disjoint Seifert surfaces, such as the unlink in $S^3$. In constructing a surface system of these Seifert surfaces, no nontrivial intersections appear so such systems are already complete. It follows that $\Primary(L)=R\langle S_1,\ldots,S_n\rangle$. Correspondingly, all of Milnor's $\bar\mu$-invariants vanish in $R$; see \cref{prop:comparison with milnor}.
\end{example}

\begin{example}[Pairwise linking]
    Let $L=(L_1,L_2)$ be the Hopf link in $S^3$, with two components exhibiting linking number $+1$. Any choice of Seifert surfaces $\Sigma_1$ and $\Sigma_2$ intersects at a collection of arcs that do not bound in $M\setminus L$. Thus, for example, $[S_1|S_2]$ and $[S_2|S_1]$ are excluded from $\Primary(L)$. On the other hand, the shuffle product $[S_1]\ast[S_2]=[S_1|S_2] + [S_2|S_1]$ is in $\Primary(L)$. In fact, $\Primary(L)$ is a (commutative) polynomial ring generated by $[S_1]$ and $[S_2]$ under the shuffle product. By the exclusion of the monomial $[S_1|S_2]$ for every coefficient ring $R$, the weight-two component $\Primary(L)_2$ detects that the pairwise linking number is $\pm 1$, see \cref{C:Milnor}.
\end{example}

\begin{example}[Triple linking]
    Let $L=(L_1, L_2, L_3)$ be the Borromean rings, with three components such that no two are linked, but with triple linking number $\mu_{123}=1$. Any generic choice of Seifert surfaces $\Sigma_1,\Sigma_2,\Sigma_3$ will have the property that every two of them have null-homologous intersection: $\Sigma_i\cap \Sigma_j=\partial \Sigma_{ij}$. Constructing an incomplete surface system for the monomial $[S_1|S_2|S_3]$, one starts with
    \[
    \mathscr{P}(\Sigma) := [\Sigma_1|\Sigma_2|\Sigma_3] -\left( [\Sigma_{12}|\Sigma_3] + [\Sigma_{1}|\Sigma_{23}]\right),
    \]
    which can be made closed if and only if the $1$-cycle $(\Sigma_{12}\cap \Sigma_3)+(\Sigma_{1}\cap \Sigma_{23})$ bounds a properly immersed surface in $M\setminus L$. But this intersection in fact includes a single unpaired arc from $L_1$ to $L_3$, which represents a nontrivial element of $H^2(M\setminus L)$. No weight-$2$ bar cochain can reproduce the cohomology class of this arc as in \cref{prop:geometric criterion} because all pairwise linking numbers are $0$. Hence the surface system cannot be completed to a bar cocycle,
    reflecting that the triple linking number $\mu_{123}$ is $1$.  Equivalently, this is measured by the Massey triple product $\langle S_1,S_2,S_3\rangle\subseteq H^2(M\setminus L;R)$, as was first observed by Massey when he introduced his eponymous product. Thus the Borromean rings are detected by the weight-3 component $\Primary(L)_3$.
    
    In higher weights, the leading-term invariant $\Primary(L)$ includes a bar monomial only if no three distinct indices appear in succession, e.g., $[\cdots|S_2|S_1|S_3|\cdots]\notin \Primary(L)$, as iterated coproducts of such a monomial will include a tensor factor $(\ldots)\otimes [S_2|S_1|S_3]\otimes (\ldots)$, violating closure of $\Primary(L)$ under the coproduct.
    Some linear combinations of excluded monomials do belong to $\Primary(L)$ as shuffle products of allowed monomials; this reflects known relations among Milnor's $\mu$-invariants. For example, $[S_1|S_2]\ast[S_3]=[S_1|S_2|S_3] + [S_1|S_3|S_2]+[S_3|S_1|S_2]\in \Primary(L)$. 

    Here we can also see that our leading-term invariants will have special properties in addition to being Hopf algebras, coming from the fact that bar cohomology detects truncated group rings of all
    groups while link groups are a proper subclass of
    groups.  For example, when linking numbers vanish, symmetries of triple linking numbers imply that the leading-term invariant contains either all permutation of $[S_i | S_j | S_k]$ or none of them.

\end{example}

\begin{example}[Linking with a $3$-chain]
\label{ex:3-chain-with-4th}
    Consider the four-component link $L=(L_1,\dots,L_4)$ shown on the right in \cref{F:3-chain-with-4th-thru-middle}. Let $\Sigma_i$ denotes a planar Seifert surface for $L_i$ and pick $\Sigma_{123}$ to be the surface which is bounded by $(\Sigma_1\cap \Sigma_2) + (\Sigma_2 \cap \Sigma_3) + (\Sigma_3 \cap \Sigma_1)$ -- filling the hole in the middle of the $3$-chain, as illustrated in the middle of \cref{F:3-chain-with-4th-thru-middle}. Then $\Sigma_{123}\cap \Sigma_4$ consists of a single arc connecting $L_3$ to $L_4$, reflecting the linking of $L_4$ with the $3$-chain. This arc cannot be bounded by anything of smaller weight since $L_4$ does not link with any other component and the linking of the other components cannot contribute any intersection arcs that run to $L_4$. So the partial surface system discussed in \cref{ex:geometric cocycle for a 3-chain} does not extend to a full bar cocycle.
    This linking phenomenon corresponds to the exclusion
    \[
    \left([S_1|S_2]+[S_2|S_3]+[S_3|S_1]\right)|[S_4] =[S_1|S_2|S_4]+[S_2|S_3|S_4]+[S_3|S_1|S_4] \notin \Primary(L)_3
    \]
    in weight 3.
    
    If we consider the link $L'$ shown on the left in Figure \ref{F:3-chain-with-4th-thru-middle}, this analogous intersection $\Sigma_{123}\cap \Sigma_4$ is empty and thus the same polynomial belongs to $\Primary(L)$. Therefore, the leading-term invariant distinguishes $L$ and $L'$ at weight 3, even though $L$ and $L'$ have the same pairwise and triple linking numbers. 
    
    The total triple linking invariant of Davis, Nagel, Orson, and Powell \cite{DNOP20} also distinguishes these two links. Indeed, the clasp words for the green component differ between the two, with signed counts involving the purple component differing by 1, while all other clasp words agree.

\begin{figure}[h!]
    \includegraphics[scale=0.5]{Tikz-Knots/Fig1_3-A.png} 
    \hspace{2cm}
    \includegraphics[scale=0.37]{Tikz-Knots/Fig2.png} 
    \hspace{2cm}
    \includegraphics[scale=0.5]{Tikz-Knots/Fig1_3-B.png} 
\caption{4-component links distinguished by our leading-term invariant at weight 3, but which are indistinguishable using classical Milnor invariants.}
\label{F:3-chain-with-4th-thru-middle}
\end{figure}
    \end{example}

While the DNOP total triple linking invariant
detects multi-component linking phenomena, as in \cref{ex:3-chain-with-4th}, their invariant applies to only what we classify as weight 3. Our invariant lets us consider all higher-weight linking, as in the following example of links that are not  distinguishable by the DNOP total triple linking invariant.

\begin{example}[A higher-weight example]
\label{ex:hopf-linked-3-chains}
    Consider the 6-component links shown in \cref{F:hopf-linked-3-chains}. Call the link shown on the left-hand side $L = (L_1,\ldots,L_6)$ and the one on the right $L'=(L_1',\ldots,L_6')$.
    Let $\Sigma_i$ be a Seifert surface for $L_i$ or $L'_i$, and proceed as in \cref{ex:3-chain-with-4th} to consider two surfaces $\Sigma_{123}$ and $\Sigma_{456}$ such that $\partial\Sigma_{ijk}=(\Sigma_i \cap \Sigma_j) +(\Sigma_j \cap \Sigma_k) +(\Sigma_k \cap \Sigma_i)$. The fact that on the left-hand \cref{F:hopf-linked-3-chains} the intersection $(\Sigma_{123} \cap \Sigma_{456})$ is empty implies that
    \begin{equation}
    \label{eq:excluded-polynomial}
    \left([S_1|S_2] + [S_2|S_3] + [S_3|S_1]\right) | \left([S_4|S_5] + [S_5|S_6] + [S_6|S_4]\right) = [S_1|S_2|S_4|S_5]+\ldots
    \end{equation}
    is included in $\Primary(L)$. In contrast, on the right-hand link $L'$ the same two surfaces intersect in an arc that connects the two 3-chains, which do not exhibit any pairwise- or triple-linking between them, and this implies that the same element in \eqref{eq:excluded-polynomial} is excluded from $\Primary(L')$.

    To construct an incomplete surface system with this leading term, one starts by extending the leading term $[S_i|S_j]+[S_j|S_k]+[S_k|S_i]$ to the bar cocycle
    \[
    [\Sigma_i|\Sigma_j]+[\Sigma_j|\Sigma_k]+[\Sigma_k|\Sigma_i] - [\Sigma_{ijk}].
    \]
    The tensor product of these for $(ijk)\in \{(123),(456)\}$ then consists of a sum of basic tensors represented by tuples of surfaces, most of which have empty intersections. Application of the geometric algorithm requires cobounding the nonempty intersections $\Sigma_{123}\cap \Sigma_4$ and $\Sigma_3\cap \Sigma_{456}$. Each of these consists of an arc whose ends both lie on the same component $L_i$ for $i\in \{3,4\}$, so half of the respective Seifert surface $\Sigma_i$ suffices and can be chosen to not intersect any further surfaces in the tuple. This gives an incomplete surface system with boundary $\Sigma_{123}\cap \Sigma_{456}$, which is empty in $L$ but in $L'$ it connects $L_3'$ to $L_4'$ and thus detects the linking as discussed above. Thus the leading-term invariant distinguishes $L$ and $L'$ at weight 4.
    
    
    It is easy to see that all classical Milnor invariants of weight at most 4 agree on this pair of links.  Moreover, their DNOP total triple-linking invariant is identical. To see this, note that the intersections $\Sigma_i \cap \Sigma_j$ are identical for all $i\neq j$ in the sense that the unions $(\Sigma_1\cup \cdots \cup \Sigma_6) \subseteq S^3$ for $L$ and for $L'$ are homeomorphic. These unions are called {\bf surface systems} by Davis, Nagel, Orson, and Powell, who show \cite[Theorem 1.1]{DNOP20} that the total triple-linking invariants of links admitting homeomorphic surface systems are equal. It also follows from that theorem that the respective link groups become isomorphic modulo the third step of their respective lower central series, so by the discussion in \cref{S:bounded truncations} $\Primary(L)_{\leq 3}=\Primary(L')_{\leq 3}$.

    
    In summary, the weight at which the distinction between $L$ and $L'$ appears is too high to be captured by the DNOP total triple linking invariant, and it does not show up in any of the classical Minor invariants due to their indeterminacy. On the other hand, since $L$ and $L'$ are satellite links, obtained from the links in \cref{ex:3-chain-with-4th} by replacing the fourth component by a chain, they can be distinguished by combining the result at weight 3 in 
    \cref{ex:3-chain-with-4th} with the link composition lemma \cite{Freedman-Lin:1989, Krushkal-Teichner:1997}.  Nonetheless, our invariant directly encodes this sharpening of Milnor invariants at arbitrarily high weights. 
    In addition, the next example -- \cref{ex:6-comp-sum-Wh-double-Borr} -- provides a non-satellite link in which our reasoning applies without change while the link composition lemma does not apply.
\end{example}

\begin{figure}[h!]
\includegraphics[scale=0.35]{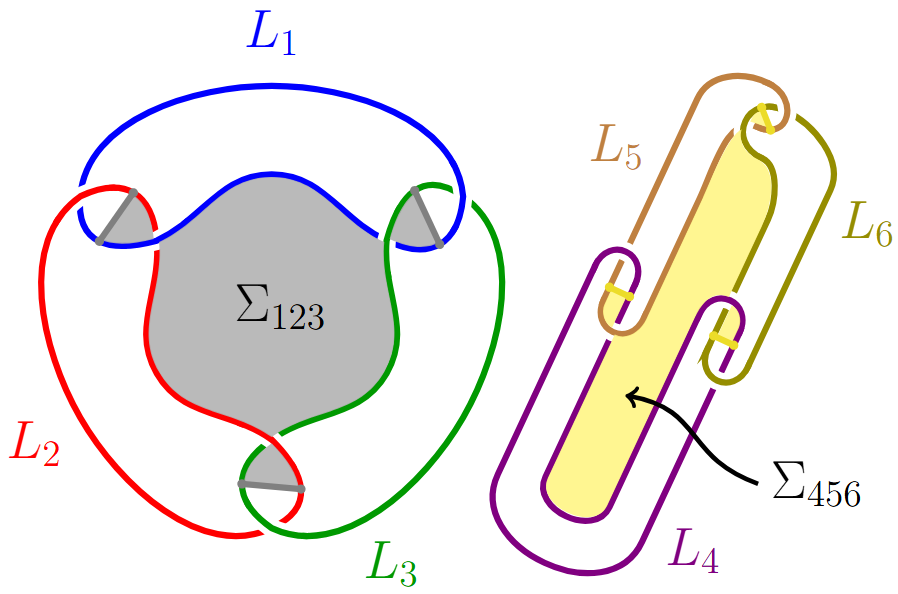}
\hspace{2cm}
\includegraphics[scale=0.35]{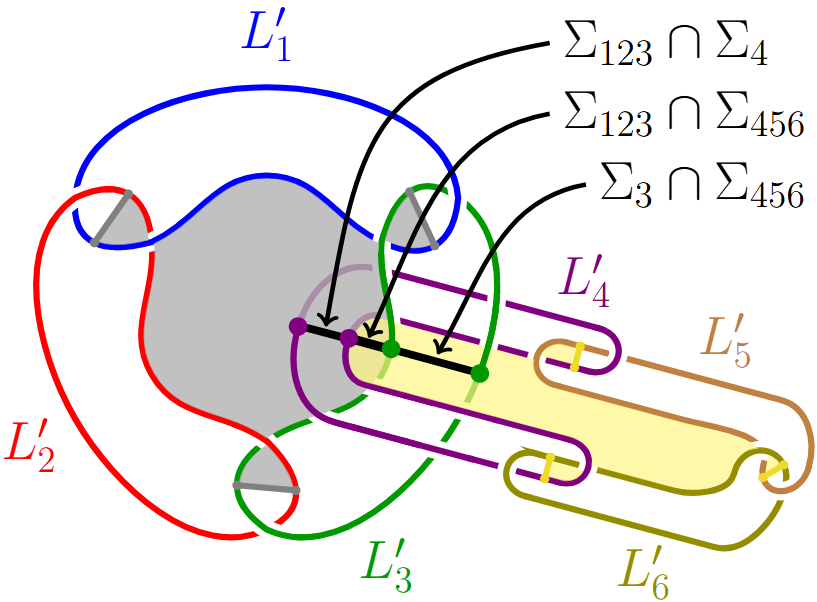}
\caption{A pair of 6-component links distinguished by our leading-term invariant $\Primary(-)$ at weight 4, but which is not distinguished by (classical) Milnor invariants.}
\label{F:hopf-linked-3-chains}
\end{figure}

\begin{example}[Non-satellite example]
\label{ex:6-comp-sum-Wh-double-Borr}
Let us provide an example that is not a satellite and therefore cannot be studied using recursive techniques such as the link composition lemma \cite{Freedman-Lin:1989, Krushkal-Teichner:1997}. We consider a variation $L''$ on the link $L'$ on the right of \cref{F:hopf-linked-3-chains}, shown in \cref{F:6-comp-sum}.  We obtain $L''$ by taking a band sum of $L'$ with the Whitehead double of the Borromean rings so that components from both 3-chains are involved. 

We distinguish the link $L''$ from the split link $L$ on the left-hand side of \cref{F:hopf-linked-3-chains} exactly like we distinguished $L$ and $L'$ in \cref{ex:hopf-linked-3-chains}: the same incomplete surface system cannot be completed in $L''$ due to the intersection $\Sigma_{123}\cap \Sigma_{456}$ consisting of an arc connecting the two chains. This highlights a strength of our invariant in that it identifies calculations that can be performed locally -- within a ball, while safely ignoring its exterior -- while still yielding well-defined invariants.

The link $L''$ is at least not obviously a satellite link, and in fact, the output of SnapPy indicates that it is almost certainly hyperbolic.  If true, this would imply it is not a satellite and hence the link composition lemma cannot be applied.  Moreover, the Whitehead double of any link with vanishing pairwise linking numbers is a boundary link, hence indistinguishable from the unlink using Milnor invariants, the total triple linking number, and our bar invariant.
Therefore, $L''$ cannot be distinguished from $L$ by any lower-weight terms or, to the best of our knowledge, by any combination of previously known refinements of Milnor invariants.  

\begin{figure}[h!]
\includegraphics[scale=0.3,angle=-90]{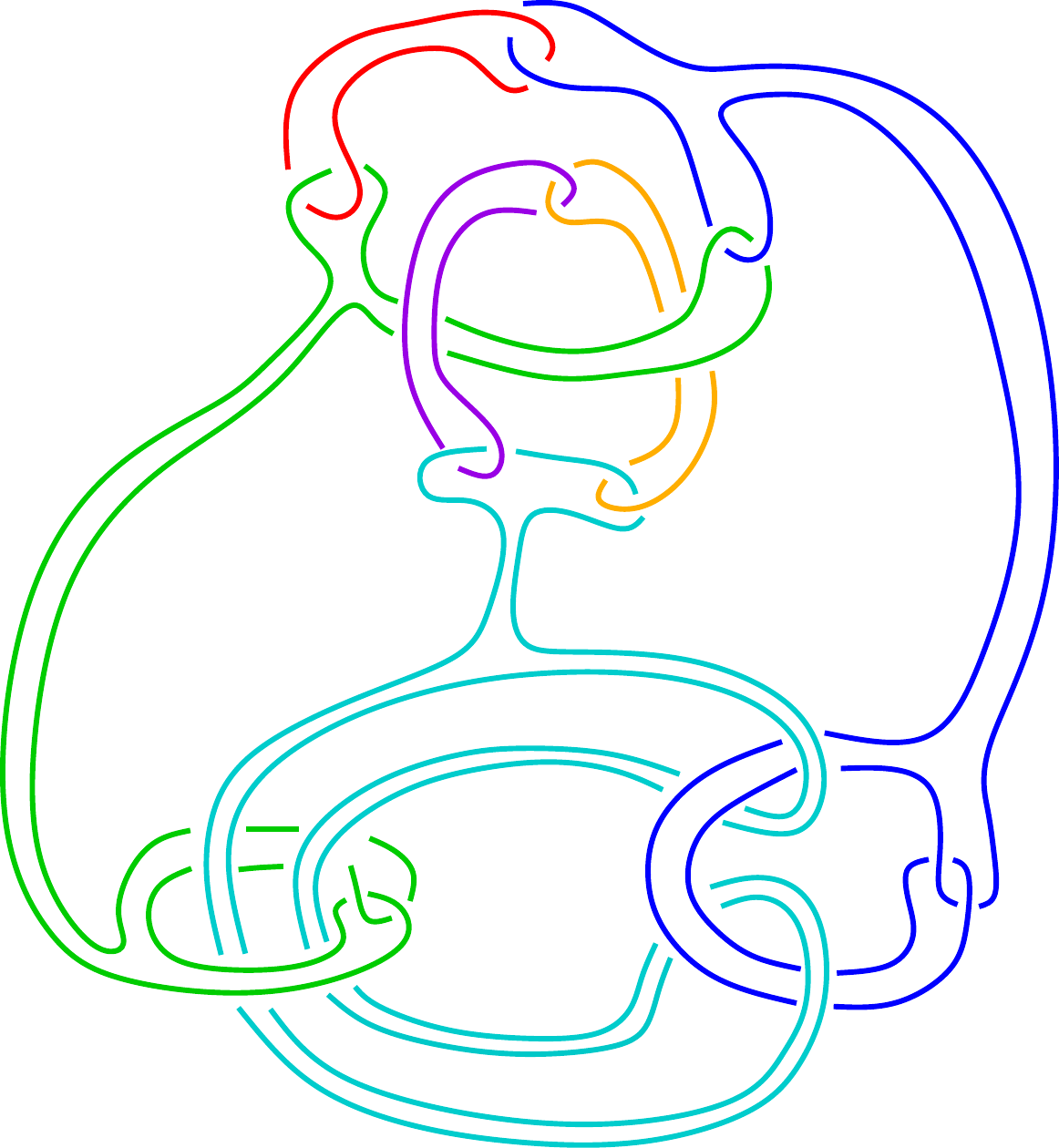}
\caption{A band sum $L''$ of the 6-component Hopf-linked 3-chains $L'$ from the right of \cref{F:hopf-linked-3-chains} with the Whitehead double of the Borromean rings.  The leading-term invariant $\Primary(-)$ distinguishes $L''$ from the disjoint union $L$ of two 3-chains at weight $4$, and $L''$ is (almost certainly) not a satellite link, so previously known refinements of Milnor invariants do not distinguish $L''$ and $L$.}
\label{F:6-comp-sum}
\end{figure}


To find a pair of non-split links that is distinguished in a similar way by our invariant, one could replace the split link $L$ by a link $L'''$ analogous to $L'$ but with the role of the Hopf link played by the Whitehead link.  Then our invariant distinguishes both $L'$ and $L''$ from $L'''$.  If one wants a pair where both links are neither split nor satellites, one could take $L''$ and the result of band-summing $L'''$ with the Whitehead double of the Borromean rings.  For any of these pairs, the argument that our invariant distinguishes them is similar to the one above, so we omit further details.

\end{example}

\section{Recovering Milnor invariants}\label{S: Milnor}
In this section, we show that
for a link $L\subseteq S^3$,
the leading-term invariant $\Primary(L)$ over $R = \Z_\Lambda$ 
detects the nonvanishing of Milnor $\bar\mu$-invariants in $R$.
In particular, letting the modulus $\Lambda$ vary, the leading-term invariants compute all $\mu$-invariants up to units in the appropriate quotient ring of $\Z$.
Thus they generalize Cochran's interpretation \cite{Co90}  of these invariants in terms of intersections of Seifert surfaces.  

First, we recall Milnor's definition, as given in detail in Meilhan's survey \cite{Me18}. Let $L\subseteq S^3$ be an $n$-component link and $*\in S^3\setminus L$ a basepoint. Pick pointed meridians $\mathpzc{m}_1, \dots, \mathpzc{m}_n: S^1\to S^3\setminus L$ where $\mathpzc{m}_i$ has linking number 1 with the $i$-th link component and does not link the other components. Abbreviate $\pi := \pi_1(S^3\setminus L,*)$ and denote its lower central series by $\gamma_1\pi :=\pi$ and $\gamma_{p+1}\pi := [\gamma_p\pi,\pi]$.  Then Milnor  \cite[Theorem 4]{Mi57b} observed that for any $p$, the nilpotent quotient $\pi/\gamma_{p+1} \pi$ is generated by the images of $(\mathpzc{m}_1,\ldots, \mathpzc{m}_n)$.


In particular, letting $\ell_i$ denote the homotopy class of the longitude of the $i$-th component, for every fixed $p$ it is possible to express each $\ell_i$ as a word in the generators.  That is, there are elements $w_1,\ldots,w_n\in F_n$ in the free group such that $\ell_i \equiv w_i(\mathpzc{m}_1,\ldots,\mathpzc{m}_n) \mod \gamma_{p+1}\pi$, where $w(\mathpzc{m}_1,\dots,\mathpzc{m}_n)$ is the image of $w_i$ under the homomorphism $F_n\to \pi$ sending the $i$-th generator to $\mathpzc{m}_i$. Milnor's $\mu$ invariants are then defined as the coefficients of these words under the Magnus--Fox expansion \cite{Me18} \cite[Ch.~12]{CDM}, which expresses elements $w\in F_n$ as a Taylor polynomial.  That is, if 
$I := (x_1-1,\ldots,x_n-1)$ is the augmentation ideal in the group ring $\Z[F_n]$ and $p$ is any natural number, these are the unique expansions, modulo $I^{p+1}$,
\[
w_{i_0} = 1+\sum_{\substack{i_1,\dots ,i_r\\ r\leq p}} \mu_{i_0i_1\cdots i_r} \cdot (x_{i_1}-1)(x_{i_2}-1)\cdots (x_{i_r}-1) + I^{p+1} \in \Z[F_n]/I^{p+1}.
\]
where $x_1,\ldots,x_n$ are the generators of $F_n$, and the sum is over all ordered tuples $(i_1,\dots,i_r)$ with $r \leq p$.

For any multi-index $I=(i_0, \dots, i_p)$ with  $1\leq i_0, \dots, i_p \leq n$, the integer $\mu_{i_0\cdots i_p}:=\mu_{i_0\cdots i_p}(L)$ is well-defined modulo the greatest common divisor $\Delta:=\Delta_{i_0\cdots i_p}(L)$ 
of the numbers $\mu_{j_0\cdots j_r}(L)$ over all multi-indices $J=(j_0, \dots, j_r)$ that are cyclic permutations of proper subsequences of $(i_0,\dots, i_p)$.
The resulting residues are Milnor's higher-order linking numbers, often called the $\bar\mu$-invariants.
We call $\mu_{i_0\cdots i_p}$ a {\bf Milnor invariant of  weight $p+1$}.
Examples include pairwise linking numbers and triple linking numbers.
In particular, for any modulus $\Lambda \in \Z$ (including $\Lambda=0$) and any multi-index $I$, if 
$\mu_{J}(L)\equiv0 \mod \Lambda$ for all cyclic permutations $J$ of proper subsequences of $I$, then the integer $\mu_{I}(L)$ modulo $\Lambda$ is independent of all choices made.

The $\bar\mu$-invariants satisfy two useful sets of identities \cite[\S3.3]{Me18}.  The first is cyclic symmetry \cite[Theorem 6]{Mi57b}, which consists of the relations $\mu_{i_0\cdots i_p} \equiv \mu_{i_1\cdots i_p i_0} \mod \Delta$.  The second are the shuffle relations \cite[Equation (23)]{Mi57b}.
The shuffle relations imply that any nontrivial Milnor invariant has at least two distinct indices, and cyclic symmetry implies that any such invariant is equal to one where the first and last indices are distinct.  The latter property appears in \cref{prop:comparison with milnor} and \cref{C:Milnor} below.

Recall that for an $n$-component link $L$, 
the leading-term invariant $\Primary(L; R)$ with coefficients in $R$ is a graded submodule of the cofree Hopf algebra $R\langle S_1,\ldots,S_n\rangle$, where $S_i$ is the cohomology class of a Seifert surface for the $i$-th component of $L$. In particular, one can ask whether a particular monomial in the $S_i$'s belongs to $\Primary(L; R)$.

\begin{theorem}
\label{prop:comparison with milnor}
    Let $L$ be an $n$-component link in $S^3$.
    Fix a modulus $\Lambda\in \Z$, 
    and fix a multi-index $(i_0,\ldots i_p)$ where $1\leq i_0, \dots, i_p \leq n$. Assuming $\mu_{j_0\cdots j_r}(L) \equiv 0 \mod \Lambda$ for all cyclic permutations $(j_0,\ldots,j_r)$ of proper subsequences of $(i_0,\ldots i_p)$, we have that if $[S_{i_0}|S_{i_1}|\cdots |S_{i_p}]\not\in \Primary(L;\Z_\Lambda)$, then $\mu_{i_0\cdots i_p}(L)\not\equiv0 \mod \Lambda$ and $i_0\neq i_p$.

    Assuming $\mu_{j_0\cdots j_r}(L)\equiv 0\mod \Lambda$ for every multi-index 
    $(j_0,\ldots,j_r)$ with $r<p$, we have that $\sum a_{i_0\cdots i_p}[S_{i_0}|S_{i_1}|\cdots |S_{i_p}]\in \Primary(L;\Z_\Lambda)$ if and only if 
    \begin{equation}
    \label{eq:Kronecker deltas thm}
        \sum_{i_0,\dots,i_p} (\delta_{s,i_p}-\delta_{s,i_0})a_{i_0\cdots i_p}\cdot \mu_{i_0\cdots i_p}(L)\equiv 0 \mod \Lambda
    \end{equation}
    for every $s\in \{1, \dots, n\}$, where $\delta_{i,j}$ is the Kronecker delta.
\end{theorem}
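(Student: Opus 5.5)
The plan is to translate membership in $\Primary(L;\Z_\Lambda)$ into a statement about polynomial functions on the link group, and then to evaluate those functions on longitudes via the Magnus expansion. By \cite[Theorem 1.3]{Gad23} (used already in the proof of \cref{P: associated graded augmentation}), degree-zero bar cohomology is identified with functions $\Z_\Lambda[\pi]\to\Z_\Lambda$ annihilating a power of the augmentation ideal. Under this identification, an element of $\Primary(L)_{p+1}$ with leading term $P=\sum a_{i_0\cdots i_p}[S_{i_0}|\cdots|S_{i_p}]$ is a polynomial function of degree $p+1$ on $\pi$. Its top-degree part, pulled back along $F_n\to\pi$ sending $x_i\mapsto \mathpzc{m}_i$, is the functional reading off the coefficient of $(x_{i_0}-1)\cdots(x_{i_p}-1)$ in the Magnus expansion. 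So the first step is to show that $P\in\Primary(L)$ if and only if the degree-$(p+1)$ Magnus functional $\phi_P$ on $\Z_\Lambda[F_n]/I^{p+2}$ descends to $\Z_\Lambda[\pi]/I^{p+2}$, up to lower-order corrections.

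Next I would use Milnor's presentation of the nilpotent quotient. Under the hypothesis that all $\mu$'s of weight $\le p$ vanish mod $\Lambda$, the group $\pi/\gamma_{p+2}\pi$ is $F_n/\gamma_{p+2}F_n$ modulo the relations $[\mathpzc{m}_s,\ell_s]$, $s=1,\dots,n$. Thus $\Z_\Lambda[\pi]/I^{p+2}$ is the quotient of $\Z_\Lambda[F_n]/I^{p+2}$ by the two-sided ideal generated by the elements $\mathpzc{m}_s\ell_s-\ell_s\mathpzc{m}_s$. Since the hypothesis gives $\ell_s-1\in I^{p}+\Lambda$, only the leading part of $\ell_s-1$ matters. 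Modulo $I^{p+2}$, the relation becomes
\[
(x_s-1)(\ell_s-1)-(\ell_s-1)(x_s-1)\equiv \sum_{i_1,\dots,i_p}\mu_{i_1\cdots i_p s}\big((x_s-1)X_{i_1\cdots i_p}-X_{i_1\cdots i_p}(x_s-1)\big),
\]
where $X_{i_1\cdots i_p}$ denotes the product of the $(x_{i_k}-1)$; I would double-check the index convention for $\ell_s$. Multiplying this relation by further augmentation elements only lands in $I^{p+2}$. Evaluating $\phi_P$ on the relation therefore gives $\sum a_{i_0\cdots i_p}(\delta_{s,i_0}\mu_{i_1\cdots i_p i_0}-\delta_{s,i_p}\mu_{i_0\cdots i_{p-1}i_p})$. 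Cyclic symmetry rewrites this as $\sum(\delta_{s,i_0}-\delta_{s,i_p})a_{i_0\cdots i_p}\mu_{i_0\cdots i_p}$, which gives \eqref{eq:Kronecker deltas thm} up to sign. This handles the second statement. The main obstacle is making the first step rigorous: lower-order correction terms in the bar cocycle correspond to lower-degree polynomial functions, and one must show they cannot absorb the obstruction. This should follow because the lower-order relations are trivial under the vanishing hypothesis. I also have to work over $\Z_\Lambda$, which is not a PID, so I would appeal to \cref{prop:primary over nonPID} rather than to a Hopf structure.

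For the first statement, the hypothesis is weaker: only the sub-multi-indices of $(i_0,\dots,i_p)$ have vanishing invariants. I would restrict to the sublink on the components that appear and use naturality of $\Primary$ under the inclusion of complements, equivalently under forgetting components. Within the relevant terms of the Magnus expansion, only the subsequences of $(i_0,\ldots,i_p)$ contribute, so the same computation applies to the single monomial. If $i_0=i_p$, the two Kronecker terms cancel, so the obstruction vanishes automatically. If $\mu_{i_0\cdots i_p}\equiv0$, the obstruction also vanishes. In either case the monomial belongs to $\Primary$, which is the contrapositive of the claim.
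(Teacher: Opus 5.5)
Your approach is sound and genuinely different from the paper's.

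\textbf{The paper's route.} The paper never leaves the weight spectral sequence. May \cite{May66} identifies $d_p[S_{i_0}|\cdots|S_{i_p}]$ with the Massey product $\langle S_{i_0},\ldots,S_{i_p}\rangle$. Porter--Turaev \cite{Po80,Tu76} identify that Massey product with $(-1)^{p+1}\mu_{i_0\cdots i_p}\tau_{i_0i_p}\in H^2(S^3\setminus L;\Z_\Lambda)$. The first statement follows because $d_p$ is the last differential out of total degree $0$. The second follows because the stronger hypothesis kills all earlier differentials into $E^{-1,2}$, so $E_p^{-1,2}=H^2$, and the Kronecker-delta conditions come from evaluating on the boundary tori.

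\textbf{Your route.} You dualize through \cite[Theorem 1.3]{Gad23} to polynomial functions on $\pi$, and work directly with Milnor's presentation of $\pi/\gamma_{p+2}\pi$ and the Magnus expansion. Your $s$-th relator $[x_s,w_s]$ plays the role of the paper's $s$-th boundary torus. Cyclic symmetry, which the paper absorbs into Porter's theorem, appears explicitly.

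\textbf{Trade-offs.} Your route is closer to Milnor's definition and produces an explicit polynomial function witnessing membership. The price is that you need Gadish's identification over $\Z_\Lambda$, including $\Lambda=0$, and compatibly with the embedding $\Primary(L)\into\Z_\Lambda\langle S_1,\ldots,S_n\rangle$; the paper only uses it over a field.

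\textbf{Your flagged obstacle is not real.} Write $X_j=x_j-1$ and $\rho_s=X_s(w_s-1)-(w_s-1)X_s$. Under the hypothesis of the second statement, $\rho_s$ is homogeneous of degree $p+1$ modulo $I^{p+2}$. Hence any function on $\Z_\Lambda[\pi]/I^{p+2}$ with leading term $P$ takes the same value on $\rho_s$ as $\phi_P$, and lower-order corrections are irrelevant.

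\textbf{The first statement needs more care.} Saying ``the same computation applies'' is not accurate as written. That hypothesis only covers cyclic permutations of proper subsequences of $(i_0,\ldots,i_p)$. Passing to a sublink does not upgrade it to vanishing of all invariants of weight $\le p$; for example, $(1,2,3,4,5)$ says nothing about $\mu_{1324}$. So $w_s-1$ can have nonzero low-degree terms mod $\Lambda$, $\rho_s$ is no longer homogeneous, and your claim that multiplying the relation by augmentation elements lands in $I^{p+2}$ fails.

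\textbf{The fix.} What does work, with no sublink reduction, is the following. Let $\phi$ be the bare coefficient functional of $X_{i_0}\cdots X_{i_p}$, and evaluate it on every spanning element $X_A\rho_sX_B$ of the relation ideal. Write $\mu_{Ks}$ for the coefficient of $X_K$ in $w_s$, as you do. Then $\phi(X_A\rho_sX_B)$ only involves those $\mu_{Ks}$ for which $sK$ or $Ks$ is a contiguous subword of $(i_0,\ldots,i_p)$. If $A$ or $B$ is nonempty, that subword is proper, so the term vanishes mod $\Lambda$ by hypothesis. For $A$ and $B$ both empty, cyclic symmetry gives
$\phi(\rho_s)=\delta_{s,i_0}\mu_{i_1\cdots i_pi_0}-\delta_{s,i_p}\mu_{i_0\cdots i_p}\equiv(\delta_{s,i_0}-\delta_{s,i_p})\mu_{i_0\cdots i_p}$.
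This vanishes for every $s$ when $i_0=i_p$ or $\mu_{i_0\cdots i_p}\equiv0$. So $\phi$ descends to $\Z_\Lambda[\pi]/I^{p+2}$, which gives the contrapositive.
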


The statement about linear combinations of monomials immediately gives a partial converse to the statement about a single monomial: if $i_0\neq i_p$ and $\mu_{j_0\cdots j_r}(L)\equiv 0 \mod \Lambda$ for all multi-indices with $(j_0,\dots,j_r)$ with $r<p$, then 
$[S_{i_0}|\dots|S_{i_p}] \in \Primary(L;\Z_\Lambda)$ implies $\mu_{i_0\cdots i_p}(L) \equiv 0 \mod \Lambda$.
Note also that the congruences \eqref{eq:Kronecker deltas thm} are equivalent to congruences 
\[
\sum_{i_0,\ldots,i_{p-1}} a_{i_0\cdots i_{p-1}s}\cdot \mu_{i_0\cdots i_{p-1}s}(L) \equiv \sum_{i_1,\ldots,i_{p}} a_{si_1\cdots i_p}\cdot \mu_{si_1\cdots i_{p}}(L) \mod \Lambda
\]
for every $s \in \{1,\dots,n\}$.

\begin{proof}
    The connection between the bar construction and Milnor's invariants goes through Massey products: 
    \[
    \{ \bar\mu\text{-invariants}\} \xleftrightarrow{\text{Turaev, Porter}} \{\text{Massey products}\} \xleftrightarrow{\text{May}} \{\text{bar construction}\}.
    \]
    Indeed, May considered the bar spectral sequence, and he showed \cite[Theorem 6 and Lemma 4]{May66} that for any coefficient ring $R$, if $S_{i_0},\ldots,S_{i_p}\in C^*(X;R)$ are cochains for which the $(p+1)$-st Massey product is defined and contains the class of $\beta \in C^*(X;R)$, then $[S_{i_0}|\ldots|S_{i_p}]\in \BBar(C^*(X;R))$ survives to the $p$-th page of the weight spectral sequence and $d_p[S_{i_0}|\ldots|S_{i_p}] =[\beta]$.

    To address the connection to Milnor's $\bar\mu$-invariants, we set $\Delta:=\Delta_{i_0,\ldots,i_p}$ to be the greatest common divisor of $\mu_{j_0\cdots j_r}$ for cyclic permutations of proper subsequences of $(i_0,\ldots,i_r)$.  We also recall that $H^2(S^3\setminus L;\Z) = \langle \tau_{ij} := \tau_i-\tau_j \mid i\neq j\rangle\subseteq \langle \tau_1,\ldots,\tau_n\rangle $ is generated by classes $\tau_{ij}$ Lefschetz dual to a path running from the $i$-th link component to the $j$-th link component; such paths represent these elements exactly in geometric cohomology. Porter \cite[Theorem 3]{Po80} showed that the Massey product $\langle S_{i_0},\cdots,S_{i_p}\rangle$ is defined in cohomology with coefficients in $\Z_\Delta$
    and contains a unique element $(-1)^{p+1} \mu_{i_0\cdots i_p}\cdot  \tau_{i_0i_p}\in H^2(S^3\setminus L; \Z_\Delta)$.  A similar result was obtained earlier and independently by Turaev \cite{Tu76}.

    Our assumption that $\mu_{j_0\cdots j_r} \equiv 0 \mod \Lambda$ for all appropriate sequences $(j_0,\ldots,j_r)$ implies that $\Lambda$ divides $\Delta$.
    Applying reduction mod $\Lambda$, the Porter--Turaev result shows that the Massey product $\langle S_{i_0},\cdots,S_{i_p}\rangle$ is defined modulo $\Lambda$ and contains the element $(-1)^{p+1} \mu_{i_0\cdots i_p}\cdot  \tau_{i_0i_p}\in H^2(S^3\setminus L;\Z_\Lambda)$. Together with May's calculation it follows that $[S_{i_0}|\ldots|S_{i_p}]$ survives to the $p$-th page of the bar spectral sequence and $d_p:[S_{i_0}|\ldots|S_{i_p}]\mapsto (-1)^{p+1} \mu_{i_0\cdots i_p}\cdot  \tau_{i_0i_p} \mod \Lambda$.
    Since $d_p$ is the last differential that could kill $[S_{i_0}|\ldots|S_{i_p}]$, a necessary condition for $[S_{i_0}|\ldots|S_{i_p}]\notin E^\infty = \Primary(L; \Z_\Lambda)$ is that $\mu_{i_0\cdots i_p}\cdot  \tau_{i_0i_p} \neq 0$ in $H^2(S^3\setminus L; \Z_\Lambda)$, which is equivalent to having $i_0\neq i_p$ and $\mu_{i_0\cdots i_p} \not\equiv 0 \mod \Lambda$.  (It is not a sufficient condition because $\mu_{i_0\cdots i_p}\tau_{i_0i_p}$ could be killed by an earlier differential.)
    
    For the statement about linear combinations of monomials, we assume that $\mu_{j_0\cdots j_r}=0 \mod \Lambda$ for all tuples $(j_0,\dots,j_r)$ of length $\leq p$. The argument in the previous paragraph shows that any monomial $[S_{j_0}|\cdots |S_{j_r}]$ with $r<p$ survives to the $E^r$-page of the bar spectral sequence, and there $d_r[S_{j_0}|\cdots |S_{j_r}] = (-1)^{r+1}\mu_{j_0\cdots j_r}\cdot \tau_{j_0 j_r} = 0$ by the vanishing assumption on the Milnor numbers. Thus, all differentials $d_r:E_r^{-{r+1,r+1}}\to E_r^{-1,2}\cong H^2(S^3\setminus L;\Z_\Lambda)$ with $r<p$ are identically zero. It follows that on the $E^p$-page, we have $E_p^{-1,2}\cong E_1^{-1,2} \cong H^2(S^3\setminus L;\Z_\Lambda)$. By the same May--Porter--Turaev argument again, the $p$-th differential is given by 
    \[
    d_p:\sum a_{i_0\cdots i_p}[S_{i_0}|\ldots|S_{i_p}] \longmapsto (-1)^{p+1}\sum a_{i_0\cdots i_p}\cdot \mu_{i_0\cdots i_p}\cdot  \tau_{i_0i_p}\in H^2(S^3\setminus L;\Z_\Lambda),
    \]
    so $\sum a_{i_0\cdots i_p}[S_{i_0}|\ldots|S_{i_p}]\in \Primary(L; \Z_\Lambda)$ if and only if $\sum a_{i_0\cdots i_p}\cdot \mu_{i_0\cdots i_p}\cdot  \tau_{i_0i_p} = 0$ in cohomology, which is equivalent to the vanishing stated in the theorem
    by evaluating this cohomology class on small tori around the link components.
\end{proof}

The first nontrivial case of Equation \eqref{eq:Kronecker deltas thm} is given in the next example. A different general implication is given in Corollary \ref{C:Milnor} below.

\begin{example}[Three equal linking numbers]
\label{ex:same linking numbers}
    Consider a 3-component link $L\subseteq S^3$ in which 
    all pairwise linking numbers are equal, i.e., $\mu_{ij}=s$, $\forall i\neq j$. Then over the integers $(\Lambda =0)$ we have
    \[ [S_1|S_2]+[S_2|S_3]+[S_3|S_1]\in \Primary(L; \Z)\]
    since $(\delta_{s,2}-\delta_{s,1})\mu_{12} + (\delta_{s,3}-\delta_{s,2})\mu_{23} + (\delta_{s,1}-\delta_{s,3})\mu_{31} = \mu_{(s-1)s} -\mu_{s(s+1)} = 0$ for all $s=1,2,3$, where arithmetic of indices is computed modulo three. In this sense, $\Primary(L; \Z)$ reflects the equality of the three linking numbers. We later use a ``witness'' of this equality in examples such as Example~\ref{ex:hopf-linked-3-chains} to build finer invariants when such links $L$ occur within larger links.  For $\Lambda\neq 0$, one can equally well use $\Primary(L; \Z_\Lambda)$ to detect when the three linking numbers are congruent modulo $\Lambda$.
\end{example}

\begin{corollary}\label{C:Milnor}
    Fix a link $L\subseteq S^3$
    and a multi-index $I=(i_0,\dots,i_p)$. Let $\Delta$ denote the greatest common divisor of all $\mu_{j_0\cdots  j_r}(L)$ over cyclic permutations $(j_0,\dots,j_r)$ of proper subsequences of $I$.  Then the collection of submodules $\Primary(L; \Z_\Lambda)\leq \Z_\Lambda \langle S_1,\ldots,S_n\rangle$ 
    over $\Lambda \in \Z$
    determines $\Delta$, as well as the residue class of the $\bar\mu$-invariant $\mu_{i_0\ldots i_p} \mod \Delta$ up to multiplication by units in $(\Z_\Delta)^\times$.   
    
    If $\Lambda$ is the greatest common divisor of all $\bar\mu$-invariants of $L$ of length $\leq p$, then the weight-$(p+1)$ submodule $\Primary(L; \Z_\Lambda)_{p+1}\subseteq \Z_\Lambda\langle S_1,\ldots,S_n\rangle_{p+1}$ detects all relations modulo $\Lambda$ between $\bar \mu$-invariants of length $(p+1)$ for which the extremal indices are fixed and distinct. That is, for any fixed pair $i_0\neq i_p$,
    \begin{equation}
    \label{eq:fixed i_0 i_p}
        \sum_{i_1\ldots i_{p-1}} a_{i_0\cdots i_p}\cdot \mu_{i_0\cdots i_p}(L) \equiv 0 \mod \Lambda
    \end{equation}
    if and only if 
    $\sum a_{i_0\cdots i_p}\cdot [S_{i_0}|S_{i_1}|\cdots | S_{i_p}]\in \Primary(L; \Z_\Lambda)$. In particular, when $\Lambda\neq 0$ the set of coefficients $a\in \Z_\Lambda$ for which $a\cdot [S_{i_0}|S_{i_1}|\cdots | S_{i_p}]\in \Primary(L;\Z_\Lambda)$ already determines $\mu_{i_0\cdots i_p}\in \Z_\Lambda$ up to units $(\Z_\Lambda)^\times$.
\end{corollary}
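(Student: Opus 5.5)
The plan is to derive everything from \cref{prop:comparison with milnor}, applied for varying moduli $\Lambda$, together with the cyclic symmetry of $\bar\mu$-invariants.

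\textbf{Second statement.} Take $\Lambda$ to be the gcd of all $\bar\mu$-invariants of length $\le p$, so the vanishing hypothesis of the second part of \cref{prop:comparison with milnor} holds. Restrict to linear combinations supported on monomials with fixed extremal indices $i_0\neq i_p$. Then in \eqref{eq:Kronecker deltas thm} the factor $\delta_{s,i_p}-\delta_{s,i_0}$ is the same for every term. It equals $1$ for $s=i_p$, $-1$ for $s=i_0$, and $0$ otherwise. So the $n$ congruences collapse to the single congruence \eqref{eq:fixed i_0 i_p}, which gives the claimed equivalence. For the last sentence, apply this to $a\cdot[S_{i_0}|\cdots|S_{i_p}]$. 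The set of admissible $a\in\Z_\Lambda$ is the annihilator of $\mu_{i_0\cdots i_p}$ in $\Z_\Lambda$. In the cyclic ring $\Z_\Lambda$ with $\Lambda\neq0$, the annihilator of $x$ determines the ideal $(x)$, and hence determines $x$ up to a unit.

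\textbf{First statement, recovering $\Delta$.} For each $\Lambda$ I decide whether all $\mu_J$ with $J$ a cyclic permutation of a proper subsequence of $I$ vanish mod $\Lambda$. Then $\Delta$ is the largest such $\Lambda$, or $0$ if every $\Lambda$ works. I would induct on the length of the subsequence. For a subsequence $J=(j_0,\dots,j_r)$ whose own proper subsequences already vanish mod $\Lambda$, I test membership of the monomials $[S_{j_0}|\cdots|S_{j_r}]$ in $\Primary(L;\Z_\Lambda)$. By cyclic symmetry we may rotate $J$ so that $j_0\neq j_r$; if all indices of $J$ are equal, $\mu_J$ vanishes by the shuffle relations. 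With $j_0\neq j_r$, the first part of \cref{prop:comparison with milnor} together with the partial converse stated after it shows that membership holds iff $\mu_J\equiv0\bmod\Lambda$.

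\textbf{Main obstacle.} The converse needs vanishing of \emph{all} lower-length invariants, not only those from subsequences of $J$. I would handle this in one of two ways. First, I could use the fact that the spectral-sequence argument only needs the lower differentials hitting the target to vanish on the relevant monomials. Alternatively, I could work with sub-links: restrict to the sublink on the indices of $J$ via naturality of $\Primary$ under inclusion of complements, as in \cref{prop:H0Bar is Hopf}. In that case the relevant lower invariants are only those on these indices, but repeated indices still force care.

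Once $\Delta$ is known, take $\Lambda=\Delta$ and apply the last sentence of the second statement to $\mu_{i_0\cdots i_p}$, after rotating so that $i_0\neq i_p$. This recovers $\mu_{i_0\cdots i_p}$ modulo $\Delta$ up to units. When $\Delta=0$, I would use instead all $\Lambda$ at once: $\mu\equiv0\bmod\Lambda$ is detected for every $\Lambda$, which pins down $|\mu|$.
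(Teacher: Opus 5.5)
Your treatment of the second paragraph of the statement is correct and matches the paper's proof. With $\Lambda$ the gcd of all invariants of length $\le p$, the factor $\delta_{s,i_p}-\delta_{s,i_0}$ is constant on a combination with fixed extremal indices $i_0\neq i_p$. So \eqref{eq:Kronecker deltas thm} reduces to \eqref{eq:fixed i_0 i_p} or to a trivial congruence. The admissible coefficients $a$ then form the annihilator of $\mu_{i_0\cdots i_p}$ in $\Z_\Lambda$, which determines $\mu_{i_0\cdots i_p}$ up to a unit.

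For the first paragraph you follow the paper's route, and the obstacle you flag is genuine. Three steps need the implication ``$[S_{j_0}|\cdots|S_{j_r}]\in\Primary(L;\Z_\Lambda)$ with $j_0\neq j_r$ implies $\mu_{j_0\cdots j_r}\equiv 0\bmod\Lambda$'': recovering $\Delta$, the case $\Delta=0$, and your final step of taking $\Lambda=\Delta$ and invoking the second paragraph (whose hypothesis concerns \emph{all} invariants of length $\le p$). \cref{prop:comparison with milnor} supplies this implication only when every invariant of lower length vanishes mod $\Lambda$, not just those indexed by subsequences of $I$. The paper's proof uses the implication at the same places without justification, namely at ``$\Delta$ is the largest integer $\Lambda$ for which \ldots'' and at ``if $i_0\neq i_p$ and the monomial lies in $\Primary(L;\Z_\Lambda)$, then $\Lambda$ divides $\mu_{i_0\cdots i_p}$''.

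Neither of your remedies closes the gap. The target class $\mu_J\tau_{j_0j_r}\in E^{-1,2}$ can be killed by earlier differentials of monomials in components \emph{outside} $J$, so looking only at the relevant monomials is not enough. Naturality for $S^3\setminus L\into S^3\setminus L_J$ gives only $\Primary(L_J)\subseteq\Primary(L)$, which is the wrong direction.

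In fact the implication, and with it the first paragraph of the corollary as stated, fails. Let $L$ be the Borromean rings, written as the closure of the pure braid $(\sigma_1\sigma_2^{-1})^3$, together with the braid axis as fourth component. Let $L'$ be the three-component unlink together with its axis.
\begin{enumerate}
\item In both links $\operatorname{lk}(a,4)=1$ for $a\le 3$ and all other linking numbers vanish. So over every $\Z_\Lambda$ the products $S_a\cup S_4$ with $a\le 3$ form a basis of $H^2$.
\item Hence $E_2^{-1,2}=0$, and $[S_1|S_2|S_3]\in\Primary(L;\Z_\Lambda)$ for every $\Lambda$; concretely, $d_1[S_1|S_4]$ and $d_1[S_4|S_3]$ already kill $\tau_{13}$.
\item Nevertheless, for $I=(1,2,3)$ one has $\Delta=0$ and $\bar\mu_{123}(L)=\pm1$.
\item Both cohomology rings are isomorphic, compatibly with the $S_i$, to $H^*(S^1\times\vee^3S^1)$, which is Koszul over every $\Z_\Lambda$. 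So $E_2$ is concentrated in total degree $0$.
\item It follows that $\Primary(L;\Z_\Lambda)=\ker d_1=\Primary(L';\Z_\Lambda)$ for all $\Lambda$, while $\bar\mu_{123}(L')=0$.
\item For $I=(1,2,3,2)$ even $\Delta$ differs between the two links ($1$ versus $0$).
\end{enumerate}

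What this route does establish is the version with $\Delta$ replaced by the gcd of all invariants of length $\le p$. That gcd can be recovered inductively in the weight, since at each stage the full vanishing hypothesis of \cref{prop:comparison with milnor} holds. The second paragraph then determines $\mu_{i_0\cdots i_p}$ modulo that gcd, up to units.
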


\begin{proof}
    By \cref{prop:comparison with milnor}, $\Delta$ as defined in the corollary statement is the largest integer $\Lambda$ for which $\Primary(L;\Z_\Lambda)$
    contains all monomials $[S_{j_0}| \dots |S_{j_r}]$ for $(j_0,\dots,j_r)$ that are cyclic permutations of proper subsequences of $I$.
    
    We next check that the collection of submodules $\Primary(L; \Z_\Lambda)\leq \Z_\Lambda\langle S_1,\ldots,S_n\rangle$ over all $\Lambda$ dividing $\Delta$ determines the residue $\mu_{i_0\cdots i_p}(L) \mod \Delta$, up to units in $\Z_\Delta$.  Indeed, by \cref{prop:comparison with milnor}, if $i_0\neq i_p$ and $[S_{i_0}|\ldots|S_{i_p}]\in \Primary(L; \Z_\Lambda)$, then $\Lambda$ divides $\mu_{i_0\cdots i_p}$ in $\Z_\Delta$. Thus the largest $\Lambda$ dividing $\Delta$ for which this monomial belongs to $\Primary(L; \Z_\Lambda)$ satisfies $\Lambda \equiv \alpha \mu_{i_0\cdots i_p} \mod \Delta$ for some $\alpha$ relatively prime to $\Delta$, i.e., for some $\alpha \in (\Z_\Delta)^\times$.

    Now let $\Lambda$ be the greatest common divisor of all $\mu_{j_0\cdots j_r}$ with $r<p$.  
    Specializing Theorem \ref{prop:comparison with milnor} to a linear combination of invariants with fixed extremal indices, 
    we have that 
    \[
    \sum_{i_1, \dots, i_{p-1}} a_{i_0\cdots i_p}\cdot [S_{i_0}|S_{i_1}|\cdots | S_{i_p}]\in \Primary(L; \Z_\Lambda) 
    \]
    if and only if the congruence \eqref{eq:Kronecker deltas thm} holds for every $s\in\{1,\dots,n\}$.  
    Since $i_0$ and $i_p$ are fixed, each value of $s$ corresponds to either the desired congruence \eqref{eq:fixed i_0 i_p} or the trivial relation.

    Lastly, when $\Lambda\neq 0$ every element $\mu\in \Z_\Lambda$ is determined up to units by its set of zero divisors. Therefore, the set of coefficients $a\in \Z_\Lambda$ for which $ a\cdot [S_{i_0}|S_{i_1}|\cdots | S_{i_p}]\in \Primary(L; \Z_\Lambda) $, which coincides with the set of zero divisors of $\mu_{i_0\cdots i_p}\in \Z_\Lambda$, determines the latter up to a unit.
\end{proof}


\begin{example}[Milnor invariants up to units]

To illustrate how Corollary \ref{C:Milnor} detects the values of Milnor invariants up to an appropriate unit, consider a link $L$ for which all Milnor invariants of weight $\leq p$ are divisible by $8$. This implies that each weight-$(p+1)$ invariant is defined mod 8. For any such $\mu_{i_0\cdots i_p}\in \Z_8$ with $i_0\neq i_p$, the corresponding monomial $[S_{i_0}|\cdots |S_{i_p}]$ is in $\Primary(L;\Z_8)$ if and only if $\mu_{i_0\cdots i_p} \equiv 0 \mod 8$. If $[S_{i_0}|\cdots |S_{i_p}]$ is not in $\Primary(L;\Z_8)$ but is in $\Primary(L;\Z_4)$, then $\mu_{i_0\cdots i_p}$ is not 0 mod 8, but it is 0 mod 4, so it must be 4 mod 8.  Similarly, if $[S_{i_0}|\cdots |S_{i_p}]$ is not in $\Primary(L;\Z_4)$ but is in $\Primary(L;\Z_2)$, then $\mu_{i_0\cdots i_p}$ is not 0 mod 4, but it is 0 mod 2, so it must be either 2 or 6 mod 8. Note that 2 and 6 differ by a unit in $\Z_8$, so 2 and 6 generate the same ideal of $\Z_8$ and hence the same quotient. Finally, if $[S_{i_0}|\cdots |S_{i_p}]\notin \Primary(L;\Z_2)$, then $\mu_{i_0\cdots i_p}$ must be odd, but we cannot detect whether it is $1$, $3$, $5$, or $7$ mod 8, as reducing the coefficient ring further by any of these kills the coefficient ring completely.  In summary, by this process we can determine to which of the sets $\{0\}$, $\{4\}$, $\{2,6\}$, or $\{1,3,5,7\}$ the Milnor invariant $\mu_{i_0\cdots i_p}$ belongs mod 8. 

More directly, if $2^k[S_{i_0}|\cdots |S_{i_p}]$ is in $\Primary(L;\Z_8)$ but $2^{k-1}[S_{i_0}|\cdots |S_{i_p}]$ is not then $\bar\mu_{i_0\cdots i_p}$ must be $8/2^k$ up to units.
\end{example}

\appendix
\section{The bar construction on geometric cochains}\label{A:geometric}

Geometric cochains are an alternative to singular cochains that give geometric representatives to cohomology classes and make the intersection theory manifest at the cochain level. They form a cochain complex\footnote{The symbol $\Gamma$ was chosen in \cite{FMS-foundations} to stand for ``geometric.'' The close relation between Segal's $\Gamma$ structures and the homotopy algebra structure on $C^*_\Gamma(M)$, which we describe in this appendix, is an unintentional happy coincidence of notation but should not be assigned any meaning.} $C^*_\Gamma(M)$ associated to a smooth manifold $M$ whose elements are equivalence classes of proper oriented\footnote{\label{F:co-orient} In general, geometric cochains are represented by co-oriented maps, but when the codomain is oriented, co-orientations are equivalent to orientations of the domain. As $R$-homology spheres are always orientable, we never need to consider co-orientations in this paper. We refer to \cite{FMS-foundations} for orientation conventions for boundaries and fiber products, noting that in the cochain case we must work first with the co-orientation conventions and then convert to the orientations induced by the orientation of codomain.} smooth maps from manifolds with corners to $M$; see \cite{FMS-foundations} for more detailed background. Their cohomology is isomorphic to singular cohomology. 

One advantage of this cohomology theory is that the cup product, represented by the fiber product of transverse maps, is strictly (graded) commutative. These products are only  partially defined on cochains, due to the transversality constraint, though they are fully defined on cohomology classes, as it is always possible to find transverse cocycles representing the cohomology classes. In case the cochains are represented by transverse embeddings (or immersions), fiber product is represented by the embedding (or immersion) of the intersection, properly oriented.
The isomorphism with singular cohomology sends the cup product to the usual cohomology cup product, realizing the slogan that cup product is dual to intersection on appropriate representatives.

We apply the bar construction to the geometric cochain complex of a link complement.
Bar cocycles are constructed by picking surfaces, intersecting them, bounding the intersections, and iterating as necessary, as codified in Section~\ref{surface systems}. Here we explain what this bar construction is and compare it to the standard bar construction discussed in \cref{S: bar cohomology}.

\subsection{Homotopy algebras}

As shown in \cite{GBF47}, the product of geometric cochains is part of a structure identified by Leinster \cite{leinster2000homotopy}, that of a homotopy commutative monoid in chain complexes. McClure refers to such structures as Leinster partial algebras in \cite{McC06}.

\begin{definition}
Let $\mathrm{Fin}$ denote the category of finite sets, with skeletal subcategory given by the finite cardinals $[p] = \{1,\ldots,p\}$ starting with $[0]=\emptyset$. This is a symmetric monoidal category under the disjoint union operation $\oplus := \coprod$
\[
\oplus :\mathrm{Fin}\times \mathrm{Fin}\to \mathrm{Fin}, \quad\text{satisfying}\quad [p]\oplus [q] \cong [p+q].
\]
If $(\mathrm{C},\otimes ,\mathbb{I})$ is a symmetric monoidal category with a class of weak equivalences satisfying a set of natural axioms  (see \cite[Definition 2.1.1]{leinster2000homotopy}), a {\bf homotopy commutative algebra} in $\mathrm{C}$ is a colax symmetric monoidal functor $A^\bullet:(\mathrm{Fin},\oplus,[0])\to (\mathrm{C},\otimes ,\mathbb{I})$ such that the `colax' structure maps satisfy {\bf the Leinster condition}: 
\[
\eta_{S,T}:A(S\oplus T) \to A(S)\otimes A(T) \text{ and } \eta_0: A([0]) \to \mathbb{I}
\]
are equivalences. Denote $A^{[p]}$ for the value $A([p])$ and call the value $A^{[1]}$ the {\bf underlying object} of the algebra. It is common to abuse notation and write $A:= A^{[1]}$.
\end{definition}
Note that since $[p] = [1]^{\oplus p}$, there are equivalences $A^{[p]}\to A^{\otimes p}$ that are equivariant with respect to the symmetric group action on both sides.  McClure \cite{McC06} considers the special case where all $A^{[p]}\to A^{\otimes p}$ are inclusions and refers to them as \textbf{domains} (of definition) for the product.

One thinks of a homotopy commutative algebra as a multiplication on the underlying object $A$, with product given by the zig-zag
\[
A^{\otimes 2} \xleftarrow{\sim} A^{[2]}\xrightarrow{\nabla_*} A,
\]
where $\nabla_*$ is the map induced by $\nabla: [2]\to [1]$ sending both nontrivial elements of $[2]$ to the nontrivial element of $[1]$. Note that both maps in this zig-zag are invariant under the map that swaps $1\leftrightarrow 2$, alluding to a strictly commutative structure, but having to invert the first equivalence breaks the symmetry and makes this property only hold up to homotopy.

Conversely, a commutative algebra $A$ defines a homotopy commutative algebra by $A^{[p]}:= A^{\otimes p}$, where the structure maps $[p]\to [q]$ induce $A^{\otimes p}\to A^{\otimes q}$ that multiply all elements in a fiber. If $[p]\to [q]$ is not surjective, we place the unit of $A$ in the tensor factors corresponding to indices not in the image.  

\begin{remark}
    Leinster explicitly excludes homotopy algebras in the category of cochain complexes and quasi-isomorphisms from his list of examples. This is because quasi-isomorphisms are not preserved by a tensor with a fixed complex. However, restricting to the subcategory of bounded-below complexes of flat $R$-modules, in which geometric cochains reside, the K\"unneth spectral sequence shows that quasi-isomorphisms are respected by the tensor product and Leinster's axioms hold.
\end{remark}

\begin{example}
    If $M$ is a smooth manifold without boundary, geometric cochains $C^*_\Gamma(M)$, whose elements are represented by proper oriented maps of manifolds with corners $N\to M$, can be given the structure of a homotopy commutative algebra with $A^{[p]}$ defined essentially as  
    \[
    \textrm{Span}\{N_1\otimes \cdots \otimes N_p \mid \text{every subset of these manifolds meets transversally\footnotemark }\} \subseteq C^*_\Gamma(M)^{\otimes p},
    \]
    and where a map $f:[p]\to [q]$ in $\mathrm{Fin}$ sends the tensor of geometric cochains $N_1\otimes \cdots\otimes  N_p$ to the tensor
    \[
    \left(\underset{j\in f^{-1}(1)}{\olduplus} N_j\right) \otimes \left(\underset{j\in f^{-1}(2)}{\olduplus} N_j\right)\otimes   \cdots \left(\otimes \underset{j\in f^{-1}(q)}{\olduplus} N_j\right).
    \]
    Here $\olduplus$ denotes the fiber product of transverse geometric cochains, which is the cup product of geometric cochains, and if $f^{-1}(k)$ is empty, we place the identity map $M \to M$ in that factor.  
    The Leinster condition of homotopy equivalence amounts to the observation that the inclusion $A^{[p]}\into C^*_\Gamma(M)^{\otimes p}$ is a quasi-isomorphism, as every tensor of cohomology classes has mutually transverse representatives. See \cite{GBF47} for details.
\end{example}
\footnotetext{Here we mean transversality of a set of maps in the strong sense that not only must each pair be transverse, but each intersection (fiber product) of a pair must be transverse to all other elements, and so on for all possible iterated intersections. Additionally, any boundary components must also satisfy all transversality conditions. We always mean transversality in this generalized sense. 
}

\begin{remark}
    Leinster explains in \cite[Section 3.1]{leinster2000homotopy} that homotopy commutative algebras in a cartesian monoidal category is equivalent to a special $\Gamma$-object in the sense of Segal \cite{Se74}. That is, a functor $A:\mathrm{Fin}_*\to \mathrm{C}$ satisfying the {\bf Segal condition}: that $A[p]\to A[1]^p$ is an equivalence. This point of view is orthogonal to our discussion here since the our algebras take values in chain complexes whose operation $\otimes$ is not the cartesian product.
\end{remark}

\begin{remark}
    Leinster also defines the notion of a homotopy associative (noncommutative) algebras. Let $\mathrm{Ord}$ denote the category of finite ordinals and weakly order-preserving maps. This is (nonsymmetric) monoidal under the ordinal sum $\oplus: \mathrm{Ord}\times \mathrm{Ord}\to \mathrm{Ord}$, with unit $[0]=\emptyset$. A homotopy associative algebra in $(\mathrm{C},\otimes ,\mathbb{I})$ is a colax monoidal functor $A^\bullet:(\mathrm{Ord},\oplus,[0])\to (\mathrm{C},\otimes ,\mathbb{I})$ such that the `colax' structure maps $A(S\oplus T)\to A(S)\otimes A(T)$ and $A([0])\to \mathbb{I}$ are equivalences.
    
    A homotopy commutative algebra restricts to a homotopy associative algebra through the natural inclusion $(\mathrm{Ord},\oplus,[0])\into (\mathrm{Fin},\oplus,[0])$ that forgets the internal order on $[p] = \{1<2<\ldots<p\}$.
    Also, an associative algebra $A\otimes A\to A$ defines a homotopy associative algebra by $A^{[p]}=A^{\otimes p}$ and using the multiplication on $A$.

    We collectively refer to homotopy commutative and associative algebras as {\bf homotopy algebras}.
\end{remark}

\subsection{The bar construction}

One can define the bar construction of a homotopy algebra, as follows, generalizing the classical notion of bar construction discussed in \cref{S: bar cohomology}. First, we require the notion of an augmentation.

\begin{definition}[Augmentation]
     Let $\mathbb{I}^\bullet$ be the homotopy commutative algebra for which $\mathbb{I}^{[p]}$ is the unit $\mathbb{I}$ 
     for all $[p]$ and whose structure maps are given by the unit isomorphism $\mathbb{I}\cong \mathbb{I}^{\otimes p}$. An {\bf augmentation} on a homotopy algebra $A^\bullet$ is a natural transformation $\epsilon_A:A^\bullet \to \mathbb{I}^\bullet$.
    An {\bf augmented homotopy algebra} is a homotopy algebra $A^\bullet$ together with an augmentation $\epsilon_A$.

    A {\bf homomorphism} of augmented homotopy algebras  is a natural transformation $F^\bullet: A^\bullet\to B^\bullet$ of colax monoidal functors that respects the augmentations by $\epsilon_A = \epsilon_B\circ F$.
\end{definition}

The bar construction is defined using the following extension of the ordinal category. 

\begin{definition}
     Let $\mathrm{Ord}_{\pm}$ denote the category of finite ordinals with fixed extremal elements $\pm \infty$, where morphisms $S_{\pm}\to T_{\pm}$ are order preserving and must preserve the extremal elements. For $p\geq 0$ let $[p]_{\pm} = \{-\infty<1<\cdots<p<\infty\}$, with $[0]_{\pm}=\{-\infty<\infty\}$; and let $[p]\subset [p]_{\pm}$ denote the subset of finite elements. 
\end{definition}

With this category, the classical bar construction extends to the setting of homotopy algebras.

\begin{definition}[Simplicial bar construction of homotopy algebras]
    Let $A^\bullet: \mathrm{Ord}\to \mathrm{C}$ be a homotopy associative algebra with an augmentation $\epsilon: A^\bullet\to \mathbb{I}^\bullet$. The {\bf simplicial bar construction} of $A^\bullet$ is defined as 
    \[
    \BBar(A)_\bullet: \mathrm{Ord}_{\pm} \to \mathrm{C} \quad \text{ by }\quad \BBar(A)_\bullet: [p]_{\pm} \mapsto A^{[p]},
    \]
    where a morphism $f:[p]_{\pm}\to [q]_{\pm}$ acts by the composition of
    \[
    A^{[p]}\xrightarrow{\sim} A^{f^{-1}(-\infty)\cap [p]}\otimes A^{f^{-1}([q])} \otimes A^{f^{-1}(\infty)\cap [p]} \xrightarrow{\epsilon\otimes \id\otimes \epsilon} \mathbb{I} \otimes A^{f^{-1}([q])}\otimes \mathbb{I} \cong A^{f^{-1}([q])} \xrightarrow{f_*} A^{[q]}.
    \]
    Here we recall that $[0]=\emptyset$ so that $A^\emptyset=A^{[0]}$.
\end{definition}

In \cite[Section 3.1]{leinster2000homotopy}, Leinster then calls it a ``distracting coincidence'' that the category $\mathrm{Ord}_{\pm}$ is isomorphic to the opposite simplex category $\Delta^{\mathrm{op}}$, so $\BBar(A)_\bullet$ turns out to be a simplicial object! 
Explicitly, if we write $[p]_-$ for the ordered set $\{0<1<\cdots<p\}$ in $\Delta^{\mathrm{op}}$, this corresponds under the isomorphism to $\{-\infty<1<\cdots < p< \infty\}$ in $\mathrm{Ord}_{\pm}$; see \cite{leinster2000homotopy,JoyalTheta}. 
With this notation, we see that
\begin{equation}\label{E: Bar}
\BBar(A)_\bullet:[p]_-\mapsto A^{[p]}
\end{equation}
is a simplicial object in the category of chain complexes. Its degeneracy and internal face maps given by the homotopy algebra structure on $A$ and with extremal face maps given by the augmentation
\[
\partial_0:A^{[p]} \xrightarrow{\sim} A\otimes A^{[p-1]} \xrightarrow{\epsilon\otimes \id} A^{[p-1]} \text{ and } \partial_p:A^{[p]} \xrightarrow{\sim} A^{[p-1]}\otimes A \xrightarrow{\id\otimes \epsilon} A^{[p-1]}.
\]
As the formula \eqref{E: Bar} uses the natural number $p$ in two places, we will simply write $[p]\mapsto A^{[p]}$ below when in the simplicial context.  One should be mindful, however, that the notation $[p]$ has different meanings depending on whether it is treated as an object in $\mathrm{Ord}, \mathrm{Ord}_{\pm}$ or $\Delta^{\mathrm{op}}$.
\begin{remark}
    Given a left- and a right- $A^\bullet$-modules, which we will not define here, one can analogously construct the simplicial two-sided bar construction $\BBar(M,A,N)_\bullet:\Delta_{\pm}\to \mathrm{C}$. Now, the element $-\infty\in [p]_{\pm}$ accounts for the right-action of $A$ on $M$ and $\infty\in [p]_{\pm}$ accounts for the left-action on $N$. The definition we gave above is that of the special case $\BBar(\mathbb{I},A,\mathbb{I})_\bullet$, which is substantially easier to define succinctly.
\end{remark}

\begin{definition}
    When $\mathrm{C}$ admits geometric realizations, define the {\bf bar construction} of $A$ to be the object $\BBar(A) := |\BBar(A)_\bullet|$.
\end{definition}

For a homotopy algebra in the category of cochain complexes, an explicit model for $\BBar(A)$ is given by first replacing the simplicial object by its complex of normalized chains, which in this case is the double complex 
\[
\BBar(A)^{-p,\bullet} = A^{[p]}/\sum Im(\sigma_i),
\]
and then computing its total complex; see \cite{Ar26}. Here the ``horizontal'' differential is given by the alternating sum of the simplicial face maps, which correspond to products of neighboring tensor factors. For an associative algebra $A$, this agrees exactly with \cref{def:bar complex} of $\BBar(A)$;
the appearance there of $\bar A$ is precisely due to the quotient by the degeneracies.

The double complex $\BBar(-)$ is a coassociative coalgebra, generalizing this structure on the bar complex for strictly associative algebras, with coproduct
\[
\Delta: \BBar(A)^{-p,\bullet} = A^{[p]}\to \bigoplus_{i+j=p} A^{[i]}\otimes A^{[j]} = \bigoplus_{i+j=p} \BBar(A)^{-i,\bullet}\otimes \BBar(A)^{-j,\bullet}
\]
defined by summing over the equivalences $A^{[p]}\to A^{[i]}\otimes A^{[j]}$ induced by the decomposition $[i]\oplus[j] \cong [p]$. This exactly generalizes the coproduct \eqref{eq:tensor coproduct} from the strictly associative setting.  When the maps $A^{[p]}\to A^{[i]}\otimes A^{[j]}$ are inclusions, this is simply  deconcatenation.

\begin{definition}\label{def:geometric bar constr}
    For a smooth manifold $M$ without boundary, 
    let $\BBar_\Gamma(M;R)$ denote the bar construction of its geometric cochains with coefficients in $R$ given by $\BBar(C^*_\Gamma(M)\otimes R)$.
\end{definition}

One place where this bar construction makes direct contact with the associative one is in the weight spectral sequence. Indeed, since for every $p$ there is an equivalence $A^{[p]}\simeq A^{\otimes p}$, the two bar constructions are very close. In fact, all of the constructions and arguments made in \cref{S: bar cohomology} apply to homotopy algebras like geometric cochains. In particular, we have the following.

\begin{proposition}\label{prop:geometric weight spectral sequence}
    Let $A$ be a homotopy commutative algebra in cochain complexes over $R$. Then $H^*(A):=H^*(A^{[1]})$ has a unique structure as a strictly commutative algebra compatible with the homotopy commutative structure on $A$.
    
    Moreover, when both $A$ and $H^*(A)$ are flat as $R$-modules and bounded-below, the spectral sequence $\BBar(A)$ associated to its filtration by columns has
    \[
    E_1 \cong \BBar(H^*(A)) \implies H^*(\BBar(A)).
    \]
\end{proposition}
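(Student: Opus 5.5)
The plan is to transport the strictly associative arguments of \cref{S: bar cohomology} to the homotopy setting, using the Leinster equivalences $\eta:A^{[p]}\xrightarrow{\sim}A^{\otimes p}$ as the bridge. For the first claim, I apply $H^*$ to the zig-zag $A^{\otimes 2}\xleftarrow{\sim}A^{[2]}\xrightarrow{\nabla_*}A$. The left map is a quasi-isomorphism, so on cohomology we obtain $H^*(A^{\otimes 2})\cong H^*(A^{[2]})\to H^*(A)$. Precomposing with the Künneth map $H^*(A)\otimes H^*(A)\to H^*(A^{\otimes 2})$ defines the product. Associativity comes from comparing the two routes $A^{[3]}\to A^{[2]}\to A$ through $\nabla\oplus\id$ and $\id\oplus\nabla$; both equal the map induced by $[3]\to[1]$ by functoriality, and the colax structure maps are compatible. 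Strict commutativity follows because the swap of $[2]$ commutes with $\nabla$ and with $\eta_{[1],[1]}$, up to the symmetry on $A^{\otimes 2}$. After passing to cohomology, this symmetry induces the graded swap on the Künneth image. The unit comes from $[0]\to[1]$ and $\eta_0$. Uniqueness holds because any product compatible with the structure must factor through this same zig-zag, and the zig-zag's only non-invertible arrow is $\nabla_*$.

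For the spectral sequence, I filter the normalized total complex by columns $p$. Then $E_0^{-p,\bullet}=A^{[p]}/\sum\mathrm{Im}(\sigma_i)$ with the internal differential, so $E_1^{-p,q}=H^q$ of the normalized column. I first show that the normalized column is quasi-isomorphic to $\bar A^{\otimes p}$, where $\bar A=\ker\epsilon$. The degeneracies $\sigma_i$ insert the unit, and under $\eta$ they correspond to inserting $1$ into the $i$-th tensor slot. Since $A=R\oplus\bar A$ splits via the augmentation, $A^{\otimes p}$ modulo the images of unit insertions is $\bar A^{\otimes p}$. I then argue, by induction on the number of degeneracies (a five-lemma argument on the short exact sequences defining the quotient), that $\eta$ induces a quasi-isomorphism on quotients. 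Flatness of $A$ and $\bar A$ together with the Künneth spectral sequence then gives $H^*(\bar A^{\otimes p})\cong H^*(\bar A)^{\otimes p}$, exactly as in \cref{lem:spectral sequence}. Here I use flatness of $H^*(A)$, so all $\operatorname{Tor}$ terms vanish; bounded-belowness makes each degree a finite computation.

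Next I identify $d_1$. It is induced by the alternating sum of internal faces, which are the maps $A^{[p]}\to A^{[p-1]}$ coming from $\nabla$ on adjacent elements. The extremal faces vanish on $\bar A$, since they apply $\epsilon$. Using the identification of $H^*(A^{[2]})\to H^*(A)$ with the product from the first part, each internal face on cohomology becomes multiplication of adjacent factors in $H^*(\bar A)^{\otimes p}$. The signs match those of \cref{def:bar complex} because we use the same normalized-chains convention. Hence $E_1\cong\BBar(H^*(A))$. Convergence to $H^*(\BBar(A))$ uses that the filtration is exhaustive and bounded in each total degree, because $A$ is bounded below.

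The main obstacle will be the naturality step: showing that the Künneth-type isomorphism $H^*(A^{[p]})\cong H^*(A)^{\otimes p}$ is compatible with the face maps. This requires the colax coherence diagrams, relating $\eta_{S,T}$ and $f_*$ for $f=\nabla\oplus\id$, to commute on the nose, or at least up to homotopy, after the inverted equivalences. I would handle this by checking on representatives in McClure's domain case, where all the $\eta$ are inclusions, and then invoking Leinster's coherence axioms for the general case.
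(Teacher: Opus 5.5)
Your proposal is correct and follows the same route as the paper. You define the product by inverting the Leinster equivalence $A^{[2]}\xrightarrow{\sim}A^{\otimes 2}$ after the K\"unneth map, then filter by columns and use the Leinster condition, K\"unneth and flatness to get $E_1\cong\BBar(H^*(A))$, while supplying details the paper leaves implicit (normalization by degeneracies, vanishing of the extremal faces, identification of $d_1$). Two small corrections: the compatibility you flag as the main obstacle holds strictly, by naturality of the colax maps $\eta_{S,T}$ and their associativity coherence, so no reduction to McClure's domain case is needed; and convergence follows because the column filtration is exhaustive with $F_{-1}=0$, not because it is bounded in each total degree, which is false since every column can contribute in total degree $0$.
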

\begin{proof}
    Make $H^*(A)$ into an algebra with the product
    \[
    H^*(A)^{\otimes 2} \to H^*(A^{\otimes 2}) \xleftarrow\sim H^*(A^{[2]}) \to H^*(A),
    \]
    which is uniquely defined by inverting the middle isomorphism. The rightmost map is that induced by the unique map $[2] \to [1]$ in $\mathrm{Fin}$. Since all maps in this diagram are invariant under the swap of the two tensor factors, this product is strictly commutative.

    Now assume that $A$ and $H^*(A)$ are flat and filter $\BBar(A)$ by its columns: $F_p = \bigoplus_{p'\leq p} A^{[p']}$. The associated spectral sequence has $E_1^{-p,*} = H^*(A^{[p]})$. By the Leinster condition and since $A$ and $H^*(A)$ are assumed flat, we have by the K\"unneth Theorem natural isomorphisms $H^\bullet(A^{[p]}) \cong H^\bullet(A)^{\otimes p}$, and since the product on $H^*(A)$ is defined via the homotopy algebra structure maps, the $d_1$-differential is exactly the bar differential of $\BBar(H^*(A))$. The result now follows by standard spectral sequence machinery.
\end{proof}

We next recall the elegant feature of homotopy commutative algebras first observed by Segal \cite{Se74} in his work on $\Gamma$-spaces, namely that in appropriate categories the bar construction of a homotopy commutative algebra is again a homotopy commutative algebra and so the construction can be iterated indefinitely. In our context, the relevance of this fact is that $H^*(\BBar(C^*_\Gamma(-;R)))$  is a strictly commutative algebra, compatible with the coproduct.
\begin{proposition}\label{prop:geometric bar is homotopy commutative}
    For any homotopy commutative algebra $A^\bullet$ in the category $(\mathrm{Ch}, \otimes, R)$ of bounded-below cochain complexes of flat $R$-modules with equivalences given by quasi-isomorphisms, the complex $\BBar(A)$ is a homotopy commutative algebra in the category of coassociative coalgebras in $\mathrm{Ch}$. In particular, $H^*(\BBar(A))$ is a filtered commutative algebra with respect to the weight filtration.
\end{proposition}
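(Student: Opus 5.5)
The plan is to adapt Segal's argument for $\Gamma$-spaces: show that the homotopy commutative structure on $A^\bullet$ induces a homotopy commutative structure on $\BBar(A)$ levelwise, and then push it through geometric realization. Concretely, for each finite set $S$ I will set $\BBar(A)^{[S]}$ to be the realization (normalized total complex) of the multisimplicial object
\[
([p_s]_-)_{s\in S}\longmapsto A^{\coprod_{s\in S}[p_s]},
\]
where $A^{\coprod [p_s]}$ means $A$ evaluated on the disjoint union of the finite sets $[p_s]$. The face and degeneracy maps in the $s$-th direction are those of $\BBar(A)_\bullet$ applied to the block $[p_s]$, extended by identities; they are well defined because $A^\bullet$ is a functor on $\mathrm{Fin}$ and the simplicial operators are images of maps in $\mathrm{Ord}_\pm$.

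The first steps are functoriality and the Leinster condition. A map $f:S\to T$ in $\mathrm{Fin}$ should induce $\BBar(A)^{[S]}\to\BBar(A)^{[T]}$ by precomposing with the diagonal $\Delta^{T}\to\Delta^S$ in the simplicial directions, i.e. restricting to the levels where $p_s=p_{f(s)}$ and then multiplying the blocks via $A(\coprod_{s\in f^{-1}(t)}[p]\to[p])$. Up to realization, this is the Eilenberg--Zilber/shuffle map, so I will define it by composing Eilenberg--Zilber with the levelwise maps. The colax maps $\BBar(A)^{[S\sqcup T]}\to \BBar(A)^{[S]}\otimes\BBar(A)^{[T]}$ come from the colax maps of $A^\bullet$ in each multisimplicial degree, followed by realization. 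To check the Leinster condition, I will filter by total simplicial degree. On $E_1$ the maps become tensor powers of $H^*(A)$, using flatness and the Künneth argument of \cref{lem:quasi-iso invariance} and \cref{prop:geometric weight spectral sequence}. They are then isomorphisms, and the standard comparison theorem gives quasi-isomorphisms. Bounded-belowness of the total degrees is needed for convergence; the bigrading is second-quadrant, but in each total degree only finitely many terms contribute because $A$ is bounded below.

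Next comes compatibility with coalgebras. Each $\BBar(A)^{[S]}$ carries a deconcatenation coproduct applied simultaneously in every simplicial direction. I will check that the structure maps are coalgebra maps. For the colax maps and the levelwise multiplications this is formal, since they act degreewise. For the diagonal/shuffle step it is the classical fact that the shuffle map is a coalgebra map for the Alexander--Whitney-type deconcatenation. This is where I expect the main obstacle. Strict compatibility of Eilenberg--Zilber with deconcatenation and strict functoriality in $\mathrm{Fin}$ (associativity of composites of shuffle maps) must be verified carefully. If strict functoriality fails, I will instead define $\BBar(A)^{[S]}$ directly as the diagonal simplicial object, for which functoriality is strict. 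I would then realize that as the homotopy algebra, using Eilenberg--Zilber only to compare with the product complex in the Leinster condition, where a quasi-isomorphism suffices.

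Finally, I apply \cref{prop:geometric weight spectral sequence} to $\BBar(A)$ (flatness of $H^*(\BBar(A))$ is not needed for the product itself, only the zig-zag). This gives a strictly commutative product on $H^*(\BBar(A))$ via $H^*\otimes H^*\to H^*(\BBar^{[2]})\xleftarrow{\sim}\cdots\to H^*(\BBar)$. All structure maps preserve the weight filtration: simplicial degrees add under the colax maps and are preserved under the diagonal. Hence the product is filtered.
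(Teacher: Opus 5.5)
Your proposal is correct, and your fallback is exactly the paper's proof. The paper uses Segal's trick of precomposing with the cartesian product on $\mathrm{Fin}$, setting $(\BBar(A)^{[p]})_{[r]}=A^{[r]\times[p]}$. This is your diagonal object, since $[r]\times[p]\cong\coprod_{s\in[p]}[r]$. In that model, functoriality in $\mathrm{Fin}$ and compatibility with deconcatenation are strict and levelwise (the paper's commutative square). The Alexander--Whitney map and the Eilenberg--Zilber theorem are needed only for the Leinster equivalence $\BBar(A)^{[p+q]}\to\BBar(A)^{[p]}\otimes\BBar(A)^{[q]}$.

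Your primary multisimplicial route moves the work to the other side. The Leinster maps are the levelwise colax maps of $A$ followed by the canonical isomorphism between $\operatorname{Tot}$ of an external tensor product and the tensor product of the $\operatorname{Tot}$'s. So they are visibly natural, coassociative, symmetric coalgebra maps, and functoriality is where you pay, via the shuffle map. The obstacle you anticipate does not actually arise:
\begin{itemize}
\item The Eilenberg--Zilber shuffle map is natural, strictly associative and strictly graded-commutative. Together with functoriality of $A$ on $\mathrm{Fin}$, this makes the composite of your maps for $S\to T\to U$ equal to the map for the composite.
\item Its compatibility with deconcatenation is the classical fact that the shuffle product on a bar construction is a coalgebra map.
\end{itemize}
So the primary route yields an honest colax \emph{symmetric} monoidal functor. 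In the diagonal model, by contrast, the colax map must be a specific natural coalgebra map from the diagonal to the tensor product, namely Alexander--Whitney. ``A quasi-isomorphism'', as your fallback puts it, is not enough. Alexander--Whitney is coassociative but symmetric only up to homotopy. That suffices for commutativity of $H^*(\BBar(A))$, which is all the paper uses downstream.

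Two justifications in your Leinster-condition step need repair, though neither affects the conclusion.
\begin{itemize}
\item Identifying the $E_1$ page with tensor powers of $H^*(A)$ requires $H^*(A)$ to be flat, which is not a hypothesis of this proposition. You do not need it. The colax maps are levelwise quasi-isomorphisms of multisimplicial objects, hence quasi-isomorphisms on the natural normalized summands, which already gives an isomorphism on $E_1$.
\item It is false that only finitely many simplicial degrees contribute in a fixed total degree: total degree $0$ of $\BBar(C^*(X;R))$ contains $C^1(X;R)^{\otimes p}$ for every $p$. Convergence comes instead from the filtration by total simplicial degree being increasing, exhaustive and starting at $0$. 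An $E_1$-isomorphism then gives quasi-isomorphisms on each filtration stage by induction, and on the union by passing to the colimit.
\end{itemize}
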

\begin{proof}
    The key observation, essentially due to Segal \cite{Se74}, is that the cartesian product $\mathrm{Fin}\times \mathrm{Fin}\to \mathrm{Fin}$ makes every $\mathrm{Fin}$-object $A^\bullet$ into a $\mathrm{Fin}\times\mathrm{Fin}$-object $A^{\bullet\times \bullet}$, or a $\mathrm{Fin}$-object in the category of $\mathrm{Fin}$-objects, with underlying object $A^{1\times \bullet}\cong A^{\bullet}$. Since the cartesian product commutes with disjoint unions in either entry, the colax monoidal structure on $A^\bullet$ gives two commuting colax monoidal structures on $A^{\bullet\times\bullet}$, and both satisfy the Leinster condition of homotopy equivalence.

    We use the first coordinate of $A^{\bullet\times\bullet}$ to perform the bar construction. In other words, for any fixed $[p]\in \mathrm{Fin}$, we construct a simplicial object in $\mathrm{Ch}$ by
    \[
    (\BBar(A)^{[p]})_{\bullet} \colon [r]_{\pm} \mapsto A^{[r]\times [p]}.
    \]
Applying geometric realization gives an obect of $\mathrm{Ch}$ we will write $\BBar(A)^{[p]}$, and treating $[p]$ as a parameter in $\mathrm{Fin}$, $\BBar(A)^{\bullet}$ is a functor from $\mathrm{Fin}$ to $\mathrm{Ch}$. We claim this is itself a homotopy algebra. 
    
To check Leinster's conditions, we observe that $\BBar(A)^{[p+q]}$ is the realization of the simplicial object 
\[
[r]_{\pm} \mapsto A^{[r]\times [p+q]} \cong A^{[r]\times ([p]\oplus [q])}\cong A^{([r]\times [p])\oplus([r]\times [q])},
\]
which by assumption maps by a natural equivalence to  \[A^{[r]\times [p]} \otimes A^{[r]\times [q]}.\]
Note that $A^{\bullet\times [p]} \otimes A^{\bullet\times [q]}$ is the product simplicial object in $\mathrm{Ch}$. 
So, applying realization with respect to the first coordinate, this level-wise equivalence becomes an equivalence  \[\BBar(A)^{[p+q]} \xr{\sim} |(\BBar(A)^{[p]})_\bullet \otimes (\BBar(A)^{[q]})_\bullet| \xr{\sim}  |\BBar(A)^{[p]}| \otimes |\BBar(A)^{[q]}|,\]
using the Alexander-Whitney map and the Eilenberg-Zilber Theorem for the last equivalence.

For the case $[p]=[0]$, we have the constant simplicial object with \[(\BBar(A)^{[0]})_{[r]}\cong A^{[r]\times [0]} \cong A^{[0]},\]  
so the realization is equivalent to $A^{[0]}$, which by assumption is equivalent to the unit cochain complex $R$.

    The coalgebra structure was defined by summing over the equivalence $A^{[q]}\to A^{[i]}\otimes A^{[q-i]}$. To check that this structure commutes with the $\mathrm{Fin}$-structure maps it suffices to observe that for every map $[p]\to [p']$ in $\mathrm{Fin}$, the diagram
    \[
    \xymatrix{
    (\BBar(A)^{[p]})_{[r]} \ar@{=}[r] \ar[d] & A^{[r]\times [p]} \ar[r]\ar[d] & A^{[i]\times [p]} \otimes A^{[r-i]\times [p]} \ar[d] \\
    (\BBar(A)^{[p']})_{[r]} \ar@{=}[r] & A^{[r]\times [p']} \ar[r] & A^{[i]\times [p']} \otimes A^{[r-i]\times [p']}
    }
    \]
    commutes. This can be rephrased as saying that the bar construction is a coalgebra in whatever category it is being computed, and since $A^{\bullet\times\bullet}$ is a homotopy commutative algebra in the category of homotopy commutative algebras, the coproduct respects the residual product structure encoded by the $\mathrm{Fin}$-object.

    It follows by the previous lemma that $H^*(\BBar(A))$ has a commutative algebra structure, which respects the weight filtration by construction.
\end{proof}

Lastly, we wish to compare the bar constructions of different homotopy algebras.
\begin{proposition}
    A homomorphism $F^\bullet: A^\bullet\to B^\bullet$ be of augmented homotopy (commutative) algebras induces a homomorphisms of coalgebras (and homotopy commutative algebras)
    \[
    \BBar(F): \BBar(A)\to \BBar(B).
    \]
    Furthermore, when all $A^{[p]}$ and $B^{[p]}$ are bounded below complexes of flat $R$-modules, the map $\BBar(F)$ is a quasi-isomorphism whenever the map of underlying objects $F^{[1]}:A^{[1]}\to B^{[1]}$ is one.
\end{proposition}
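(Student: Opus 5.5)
The plan has two parts: functoriality at the simplicial level, then a spectral sequence comparison in the style of \cref{lem:quasi-iso invariance}.

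For functoriality, a homomorphism $F^\bullet$ is a natural transformation of colax monoidal functors that respects augmentations. Precomposing with the forgetful functor $\mathrm{Ord}_{\pm}\to\mathrm{Ord}$ (or $\mathrm{Ord}\to\mathrm{Fin}$), I define $\BBar(F)_\bullet$ levelwise by $F^{[p]}:A^{[p]}\to B^{[p]}$. The simplicial structure maps are composites of three kinds of maps: colax structure maps, structure maps $f_*$, and augmentations. $F$ commutes with the first two by naturality and monoidality, and with the augmentations by $\epsilon_A=\epsilon_B\circ F$. So $\BBar(F)_\bullet$ is a simplicial map, and it induces $\BBar(F)$ on realizations, i.e.\ on normalized total complexes. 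It also commutes with the coproduct, since the coproduct is the sum of the colax maps $A^{[p]}\to A^{[i]}\otimes A^{[j]}$.

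In the commutative case I apply the same argument to the doubled object $A^{\bullet\times\bullet}$ from the proof of \cref{prop:geometric bar is homotopy commutative}. $F$ induces a map $F^{\bullet\times\bullet}$ compatible with both colax structures, and realizing in the first coordinate gives a natural transformation $\BBar(A)^\bullet\to\BBar(B)^\bullet$ of homotopy commutative algebras in coalgebras.

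For the quasi-isomorphism claim, I filter both bar constructions by columns, $F_p=\bigoplus_{p'\le p}$ of the normalized pieces. $\BBar(F)$ preserves this filtration, so it induces a map of weight spectral sequences. On $E_0$ it is the map $A^{[p]}/\mathrm{degen}\to B^{[p]}/\mathrm{degen}$. To compare $E_1$ pages, I first show that $F^{[p]}$ is a quasi-isomorphism for all $p$. This follows from the commutative square formed by $F^{[p]}$, $(F^{[1]})^{\otimes p}$ and the Leinster equivalences $A^{[p]}\to A^{\otimes p}$ and $B^{[p]}\to B^{\otimes p}$. The tensor power $(F^{[1]})^{\otimes p}$ is a quasi-isomorphism by the inductive flatness and bounded-below argument in the proof of \cref{lem:quasi-iso invariance}, so by two-out-of-three $F^{[p]}$ is one too.

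The main obstacle is passing from the unnormalized levels $A^{[p]}$ to the normalized quotients. I would avoid it by working with the unnormalized (Moore) total complex, which is quasi-isomorphic to the normalized one by the Dold--Kan normalization theorem, naturally in simplicial maps. The Moore complex has the analogous column filtration with $E_1^{-p,*}=H^*(A^{[p]})$. The levelwise quasi-isomorphisms give an isomorphism on $E_1$, and since the complexes are bounded below the filtration is exhaustive and bounded in each total degree, so \cite[Theorem 5.5.11]{WEIB} yields a quasi-isomorphism of Moore complexes. Naturality of normalization then gives that $\BBar(F)$ is a quasi-isomorphism.
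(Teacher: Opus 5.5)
Your proposal is correct and follows the paper's proof: naturality, monoidality and compatibility with augmentations make the levelwise maps $F^{[p]}$ a simplicial map compatible with the coproduct, and the square built from the Leinster maps and $F^{\otimes p}$ shows by two-out-of-three that every $F^{[p]}$ is a quasi-isomorphism. The only difference is in the last step: the paper simply asserts that realization preserves levelwise quasi-isomorphisms, whereas you prove it via the column spectral sequence of the Moore complex together with natural Dold--Kan normalization. One correction there: Weibel's comparison theorem applies because the column filtration is exhaustive with $F_{-1}=0$, i.e.\ bounded below; it is not bounded in a fixed total degree, since infinitely many columns contribute to each total degree.
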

\begin{proof}
    A natural transformation $F$ that preserves the respective augmentations obviously induces a natural transformation of simplicial object $\BBar(F)_\bullet:\BBar(A)_\bullet \to \BBar(B)_\bullet$  The induced map on geometric realizations is the proposed map $\BBar(F)$, which clearly respects the coalgebra and homotopy commutative algebra structures.

    Suppose now that $F^{[1]}:A\to B$ is a quasi-isomorphism and that all $A^{[p]}$ and $B^{[p]}$ are bounded-below and flat over $R$. Then for every $p\geq 0$ the map $F^{\otimes p}: A^{\otimes p}\to B^{\otimes p}$ is a quasi-isomorphism and thus the commutative diagram
    \[
    \xymatrix{
    A^{[p]} \ar[r]^{F^{[p]}} \ar[d] & B^{[p]} \ar[d] \\
    A^{\otimes p} \ar[r]^{F^{\otimes p}} & B^{\otimes p}
    }
    \]
    has all but the top arrow be a quasi-isomorphism. But this already implies that every component $F^{[p]}$ of the natural transformation $\BBar(F)_\bullet$ is a quasi-isomorphism. Since geometric realization takes level-wise equivalences to equivalences, the total map $\BBar(F)$ is a quasi-isomorphism.
\end{proof}
For the case of geometric cochains, these are flat over any ring, taking 
\[
C^*_\Gamma(-;R) := C^*_\Gamma(-)\otimes_\Z R.
\]
\begin{lemma}\label{L: goeometric flat}
    The complex of geometric cochains with coefficients in $R$ consists of flat $R$-modules.
\end{lemma}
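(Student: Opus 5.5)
The plan is to reduce flatness of $C^*_\Gamma(M;R)=C^*_\Gamma(M)\otimes_\Z R$ to flatness of the integral complex $C^*_\Gamma(M)$ over $\Z$, and then use base change. If a $\Z$-module $F$ is flat over $\Z$, then $F\otimes_\Z R$ is flat over $R$. Indeed, for any $R$-module $N$ we have $(F\otimes_\Z R)\otimes_R N\cong F\otimes_\Z N$, and the functor $F\otimes_\Z(-)$ is exact on $R$-modules, since any exact sequence of $R$-modules is in particular an exact sequence of abelian groups. So the problem reduces to showing that each $C^q_\Gamma(M)$ is flat over $\Z$, i.e.\ torsion-free, because $\Z$ is a PID (\cite[Proposition 4.20]{LAM}).

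For torsion-freeness, I will recall the definition from \cite{FMS-foundations}. In degree $q$, $C^q_\Gamma(M)$ is the free abelian group on isomorphism classes of proper co-oriented maps $N\to M$ of codimension $q$ from manifolds with corners. We then quotient by the subgroup $Q$ generated by the relations:
\begin{itemize}
\item reversal of co-orientation equals negation;
\item disjoint unions equal sums;
\item trivial classes, i.e.\ maps factoring through lower-dimensional pieces or admitting orientation-reversing automorphisms, vanish.
\end{itemize}
A quotient of a free abelian group is torsion-free exactly when the relation subgroup is saturated: if $k x\in Q$ with $k\neq 0$, then $x\in Q$. The key step is to invoke the structural result of \cite{FMS-foundations} that geometric chains and cochains are in fact free abelian. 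There, a basis is given by choosing one representative, with a fixed co-orientation, from each isomorphism class of connected ``nontrivial'' maps modulo the equivalence relation. In the degenerate case the class is zero, and in the case of a class with an orientation-reversing self-equivalence it is identified with its own negative. I expect the paper to cite exactly this freeness, and free modules are flat.

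The main obstacle is the handling of classes $x$ with $x=-x$, which could produce $2$-torsion. The point to verify, which I would check against \cite{FMS-foundations}, is that such classes are declared trivial, i.e.\ set equal to $0$ rather than of order $2$. In \cite{FMS-foundations} this is precisely built into the definition: maps admitting orientation-reversing automorphisms compatible with the map are taken to be trivial. That makes $Q$ saturated and the quotient free.

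Once torsion-freeness over $\Z$ is established, the base change above gives the lemma for arbitrary $R$. This also supplies the hypotheses needed in \cref{prop:geometric weight spectral sequence} and the non-PID argument of \cref{prop:primary over nonPID}.
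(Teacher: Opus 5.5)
Your proposal is correct and follows essentially the paper's argument. The paper likewise cites \cite{FMS-foundations} for the fact that $C^*_\Gamma(-)$ is torsion-free, hence flat, over $\Z$, and then uses that extension of scalars $(-)\otimes_\Z R$ preserves flatness; only torsion-freeness is needed there, so your appeal to full freeness and your discussion of self-negative classes are extra detail rather than required steps.
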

\begin{proof}
    As shown in \cite{FMS-foundations}, the complex $C^*_\Gamma(-)$ is torsion-free over $\Z$, so it is flat over $\Z$ . The claim follows since extension of scalars $(-)\otimes_\Z R$ preserves flatness.
\end{proof}

\begin{corollary}[Proper topological invariance]\label{cor:topological invariance of geomteric bar}
    Suppose $f:M_0\to M_1$ is a smooth proper map of smooth manifolds without boundary, and let $i_f:C^*_{\Gamma,f}(M_1;R)\into C^*_{\Gamma}(M_1;R)$ be the quasi-isomorphic subcomplexes consisting of cochains transverse to $f$. Then the maps on geometric cochains induced the inclusion $i_f$ and by transverse pullback of cochains $f^*:C^*_{\Gamma,f}(M_1;R)\to C^*_\Gamma(M_0;R)$ induces a natural zig-zag of maps
    \[\BBar_{\Gamma}(M_1;R) \xleftarrow{\BBar_\Gamma(i_f)}\BBar(C^*_{\Gamma,f}(M_1;R))\xrightarrow{\BBar(f)} \BBar_\Gamma(M_0;R),\]
    where the left-pointing map is a quasi-isomorphism. In particular, $f$ induces a well-defined pullback on bar cohomology.
    
    Furthermore, if the cohomology groups of $M_0$ and $M_1$ are finitely generated and $f$ induces an isomorphism on geometric cohomology with coefficients in $R$, then $\BBar(f)$ is a quasi-isomorphism as well.
\end{corollary}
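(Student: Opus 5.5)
The plan is to build the zig-zag from the transverse subcomplex and then get both quasi-isomorphism claims from the previous proposition on homomorphisms of homotopy algebras.

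\textbf{Step 1: the zig-zag.} Define the homotopy commutative algebra $C^{*,[p]}_{\Gamma,f}(M_1;R)$ as the span of tensors $N_1\otimes\cdots\otimes N_p$ of mutually transverse cochains. The transversality is required in the strong sense, jointly with $f$: every iterated fiber product of the $N_j$ must be transverse to $f$. This is a sub-homotopy algebra of $C^{*,\bullet}_\Gamma(M_1;R)$. It inherits the augmentation by restriction to a basepoint chosen off the relevant loci, and I would take the basepoint of $M_1$ to be $f$ of the basepoint of $M_0$. Transverse pullback $f^*$ commutes with fiber products, since $f^*(N\pitchfork N')=f^*N\pitchfork f^*N'$ when everything is transverse to $f$. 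Hence $f^*$ is a natural transformation of colax monoidal functors $C^{*,\bullet}_{\Gamma,f}(M_1)\to C^{*,\bullet}_\Gamma(M_0)$ that respects augmentations. The previous proposition then produces both $\BBar(i_f)$ and $\BBar(f)$ as coalgebra maps. One thing to check is the Leinster condition for the $f$-transverse algebra. It follows from the same moving argument that underlies the Leinster condition for $C^*_\Gamma$: any tuple of cohomology classes has representatives transverse to each other and to $f$, which is a consequence of the quasi-isomorphism $i_f$ established in \cite{FMS-foundations}, applied to families of maps.

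\textbf{Step 2: $\BBar(i_f)$ is a quasi-isomorphism.} By \cref{L: goeometric flat}, all the $A^{[p]}$ involved are bounded-below complexes of flat modules. For subcomplexes one has to confirm this flatness: $C^*_{\Gamma,f}\otimes_\Z R$ is a subgroup of a torsion-free group tensored with $R$, so it is torsion-free over $\Z$ and therefore flat after extending scalars. The underlying map $i_f^{[1]}$ is a quasi-isomorphism by hypothesis. The previous proposition therefore shows that $\BBar(i_f)$ is a quasi-isomorphism. Inverting it on cohomology gives the well-defined pullback $H^*(\BBar(f))\circ H^*(\BBar(i_f))^{-1}$.

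\textbf{Step 3: $\BBar(f)$ is a quasi-isomorphism when $f^*$ is an isomorphism on cohomology.} The underlying map $f^*\colon C^*_{\Gamma,f}(M_1;R)\to C^*_\Gamma(M_0;R)$ induces the composite of $H^*(i_f)^{-1}$, which is an isomorphism, with the isomorphism $f^*$ on geometric cohomology. So it is a quasi-isomorphism, and the previous proposition applies again. I do not expect to need the finite generation hypothesis for this route. If instead one argues through the weight spectral sequence of \cref{prop:geometric weight spectral sequence}, whose $E_1$ page is $\BBar(H^*)$ once $H^*$ is flat, finite generation would enter through the K\"unneth identifications. As a fallback I would use that argument, comparing $E_1$ pages and invoking \cite[Theorem 5.5.11]{WEIB}.

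\textbf{Expected obstacle.} The hard step is Step 1, specifically verifying the Leinster condition for $C^{*,\bullet}_{\Gamma,f}(M_1)$. This requires a simultaneous transversality and moving lemma, relative to both $f$ and the other cochains, together with naturality of this lemma with respect to fiber products. I would cite \cite{GBF47} and \cite{FMS-foundations} for the corresponding statements and adapt them so that $f$ is included as an additional map in the transverse family.
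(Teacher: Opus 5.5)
Your proposal is correct and follows the paper's proof. The paper likewise defers to \cite{FMS-foundations} and \cite{GBF47} for the $f$-transverse subcomplex, its homotopy commutative structure, and the fact that $f^*$ is a homomorphism of such algebras, and then applies the preceding proposition first to $i_f$ and then to $f^*$. The one divergence is in the last part: the paper uses finite generation, via \cite[Corollary 6.21 and Theorem 6.29]{FMS-foundations}, to identify geometric with singular $R$-cohomology, so that it can start from an $R$-(singular) cohomology isomorphism; you instead read the hypothesis literally as an isomorphism on $H^*(C^*_\Gamma(-;R))$, and under that reading your remark that finite generation is not needed is right.
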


\begin{proof}
    We refer the reader to \cite[Section 7.4]{FMS-foundations} for details concerning $C^*_{\Gamma,f}(M_1;R)$ and the pullback map $f^*:C^*_\Gamma(M_1;R)\to C^*_\Gamma(M_0;R)$ and to \cite{GBF47} for details concerning $\BBar(C^*_{\Gamma,f}(M_1;R))$ and the fact that $f^*$ extends to a homomorphism of homotopy commutative algebras.
    Together, these provide a pullback of homotopy commutative algebras and so induce a map of bar complexes, as claimed. By the last proposition, since $i_f$ is a quasi-isomorphism, its induced map on bar constructions is also one.

    Lastly, if $M_0$ and $M_1$ have finitely generated homology, then \cite[Corollary 6.21 and Theorem 6.29]{FMS-foundations} show that there exists a natural isomorphism $H^*(C^*_\Gamma(-;R))\cong H^*_{Sing}(-;R)$, and thus an $R$-homology isomorphism induces a quasi-isomorphism on geometric cochains. In such a case, again, the last proposition shows that $\BBar(f^*)$ is a quasi-isomorphism as well.
\end{proof}
In light of the quasi-isomorphism invariance, to compare the singular and geometric bar constructions it suffices to find quasi-isomorphisms of partial algebras $C^*_\Gamma(-)\xrightarrow{\sim} A(-) \xleftarrow{\sim}  C^*_{Sing}(-)$. We learned through private communication that such a comparison is constructed in Pizzi's forthcoming dissertation \cite{Pizzi}. Hence the invariants obtained through the two models are naturally equivalent.

    \bibliographystyle{alpha}
    \bibliography{bibliography}
\end{document}